\renewcommand{\P}{\mathrm{P}}
\newtheorem{prop}{Proposition}[section]
\newtheorem{thm}[prop]{Theorem}
\newtheorem{cor}[prop]{Corollary}
\newtheorem{lemma}[prop]{Lemma}
\theoremstyle{definition}
\newtheorem{hyp}[prop]{Hypothesis}
\theoremstyle{remark}
\newtheorem{rmk}[prop]{Remark}
\numberwithin{equation}{section}
\begin{document}
\title{Hitting probabilities for general \\Gaussian processes}
\author{Eulalia Nualart}
\address[Eulalia Nualart]{Department of Economics and Business, Universitat Pompeu Fabra
and Barcelona Graduate School of Economics, Ram\'on Trias Fargas 25-27, 08005
Barcelona, Spain}
\email{eulalia@nualart.es}
\urladdr{http://nualart.es}
\author{Frederi Viens}
\address[Frederi Viens]{Department of Statistics, Purdue University, West Lafayette, IN
47907-2067, USA}
\email{viens@purdue.edu}
\urladdr{http://www.stat.purdue.edu/$\sim$viens}
\thanks{First author acknowledges support from
the European Union programme FP7-PEOPLE-2012-CIG under grant agreement 333938.
Second author partially supported by NSF grant DMS 0907321.}

\begin{abstract}
For a scalar Gaussian process $B$ on $\mathbb{R}_{+}$ with a prescribed
general variance function $\gamma^{2}\left(  r\right)  =\mathrm{Var}\left(
B\left(  r\right)  \right)  $ and a canonical metric $\mathrm{E}[\left(
B\left(  t\right)  -B\left(  s\right)  \right)  ^{2}]$ which is commensurate
with $\gamma^{2}\left(  t-s\right)  $, we estimate the probability for a
vector of $d$ iid copies of $B$ to hit a bounded set $A$ in $\mathbb{R}^{d}$,
with conditions on $\gamma$ which place no restrictions of power type or of
approximate self-similarity, assuming only that $\gamma$ is continuous,
increasing, and concave, with $\gamma\left(  0\right)  =0$ and $\gamma
^{\prime}\left(  0+\right)  =+\infty$. We identify optimal base (kernel)
functions which depend explicitly on $\gamma$, to derive upper and lower
bounds on the hitting probability in terms of the corresponding generalized
Hausdorff measure and non-Newtonian capacity of $A$ respectively. The proofs
borrow and extend some recent progress for hitting probabilities estimation,
including the notion of two-point local-nondeterminism in Bierm\'{e}, Lacaux,
and Xiao \cite{Bierme:09}. These techniques are part of a well-known strategy,
used in various contexts since the 1970's in the study of fine path
properties, of using covering arguments for upper bounds, and
second-moment-based energy estimates for lower bounds. Other techniques, such
as a reliance on classical Gaussian path regularity theory, or quantitative
estimates based on H\"{o}lder continuity or indexes, must be entirely
abandonned because they cannot provide results which are sharp enough.
Instead, all calculations are intrinsic to $\gamma$, and we use new density
estimation techniques based on the Malliavin calculus in order to handle the
probabilities for scalar processes to hit points and small balls. We apply our
results to the probabilities of hitting singletons and fractals in
$\mathbb{R}^{d}$, for a two-parameter class of processes. This class is fine
enough to narrow down where a phase transition to point polarity (zero
probability of hitting singletons) might occur. Previously, the transition
between non-polar and polar singletons had been described as the single point
where a process is $H$-H\"{o}lder-continuous in the mean-square with $H=1/d$;
now we can see how a range of logarithmic corrections affects this transition.

\end{abstract}
\subjclass[2010]{60G15, 60G17, 60G22, 28A80}
\keywords{Hitting probabilities, Gaussian processes, Capacity, Hausdorff measure,
Malliavin calculus}
\date{March 2014}
\maketitle

\section{Introduction}

\subsection{Background and motivation}

In this paper we will assume throughout that $B=(B(t),t\in\mathbb{R}_{+})$ is
a centered continuous Gaussian process in $\mathbb{R}$ such that for some
constant $\ell\geq1$, some continuous strictly increasing function
$\gamma:\mathbb{R}_{+}\rightarrow\mathbb{R}_{+}$ with $\lim_{0}\gamma=0$, and
for all $s,t\in\mathbb{R}_{+}$,
\begin{equation}
(1/\ell)\gamma^{2}(|t-s|)\leq\mathrm{E}[|B(t)-B(s)|^{2}]\leq\ell\gamma
^{2}(|t-s|). \label{e1}%
\end{equation}
The above condition yields, with $s=0$, that $\mathrm{Var}B\left(  t\right)  $
is commensurate with $\gamma^{2}(t)$; in addition, we also assume throughout
the paper that $\gamma$ and $\mathrm{Var}B\left(  \cdot\right)  $ actually
coincide: for all $t\in\mathbb{R}_{+}$,
\begin{equation}
\mathrm{Var}B\left(  t\right)  =\gamma^{2}\left(  t\right)  . \label{e1v}%
\end{equation}
Note that $\gamma$ does not define the law of $B$ since distinct processes
with the same variance function $\gamma$ may satisfy (\ref{e1}). For economy
of notation, we also often use the letter $B$ to designate a vector of $d$ iid
copies of the scalar version of $B$. Whether $B$ needs to represent a scalar
or vector version should be clear from the context.

A number of Gaussian processes satisfy (\ref{e1}) and (\ref{e1v}), including
each fractional Brownian motion (fBm) with Hurst parameter $H\in(0,1)$, in
which case, $\ell=1$ and $\gamma\left(  t\right)  =t^{H}$ : in other words,
the inequalities in (\ref{e1}) are equalities, and they reflect fBm's
self-similarity and stationarity; Brownian motion $W$ is included therein, by
taking $H=1/2\,$. The so-called Riemann-Liouville fractional Brownian motion
(RL-fBm) with parameter $H$, defined via standard Brownian motion $W$ by
$B^{RL,H}\left(  t\right)  :=\sqrt{2H}\int_{0}^{t}\left(  t-s\right)
^{H-1/2}dW\left(  s\right)  $ is also covered, with $\gamma\left(  t\right)
=t^{H}$ just like with fBm, but with $\ell=2$: this process has non-stationary
increments, see \cite{Mocioalca:04}. The fBm and the RL-fBm are both
self-similar with parameter $H$, both correspond to $\gamma\left(  t\right)
=t^{H}$, and therefore exhibit continuity properties that have traditionally
been evaluated within the H\"{o}lder scale. This pattern of concentrating on
the value of $H$ has permeated research on the estimation of hitting
probability properties, see for e.g. \cite{Bierme:09, Testard:86, Xiao:99,
Xiao:09}. Another important class of self-similar examples with non-stationary
increments, which still satisfy (\ref{e1}) and (\ref{e1v}), are the solutions
of the stochastic heat equation with additive noise whose space behavior is of
Riesz-kernel type, as described in the preprint to appear \cite{OT}, and the
preprint \cite{TTV-heat}. Solutions of stochastic heat equations can also
easily lose the self-similarity property: see those studied in
\cite{Nualart:09} in the context of hitting probabilities, where the
H\"{o}lder scale is still the dominant yardstick; in those examples,
(\ref{e1}) and (\ref{e1v}) are still satisfied, for a function $\gamma\left(
t\right)  $ which is equivalent to $t^{H}$ as $t\rightarrow0$ for some
$H\in(0,1)$, hence the H\"{o}lder property. Any process satisfying (\ref{e1})
and (\ref{e1v}) where $\gamma\left(  t\right)  $ is commensurate with $t^{H}$
for small $t$ will still live in this H\"{o}lder scale.

One motivation of the present paper is to avoid such restrictions, by letting
the function $\gamma$, whether it be commensurate with a power function or
not, tell us how hitting probabilities behave. Classical results of R. Dudley
and others from Gaussian continuity theory (see \cite{Adler:90}) tell us that
under (\ref{e1}), if $\gamma\left(  r\right)  =o\left(  \log^{-1/2}\left(
1/r\right)  \right)  $ for $r$ near zero, then $B$ is almost-surely
continuous, and the function $h:r\mapsto\gamma\left(  r\right)  \log
^{1/2}\left(  1/r\right)  $ is, up to a deterministic constant, a uniform
modulus of continuity for $B$; i.e.%
\begin{equation}
\sup_{0\leq s<t<T}\frac{|B(t)-B(s)|}{h(t-s)}<\infty\qquad\text{a.s.}
\label{mc}%
\end{equation}
Such precise quantitative results depending only on a general $\gamma$ should
also be available for hitting probability questions, and this is the question
we try to address in this paper. In the case of moduli of continuity for
Gaussian processes, while these results are known to be sharp in some cases
(see a rather general treatment for stationary processes on the unit circle in
\cite{Tindel:04}), there are generally no lower-bound results based on
$\gamma$. In this article we will try to go one step further for hitting
probabilities, as we will derive upper \emph{and} lower bounds which both
relate to the function $\gamma$.

The basic structure of the argument in order to achieve a goal of estimating
hitting probabilities is now classical: to apply a covering argument in one
direction to get a Hausdorff-measure upper bound, and an energy estimate
(second moment argument with Paley-Zygmund inequality) in the other to get a
capacity lower bound. We will use that strategy. Its tools have been developed
and applied over the years by Albin \cite{Albin:92}, Hawkes \cite{Hawkes:77},
Kahane \cite{Kahane:82, Kahane:85, Kahane:85a}, Monrad, Pitt \cite{Monrad:87},
Testard \cite{Testard:86}, Weber \cite{Weber:83}, Xiao \cite{Xiao:99}, and
many others. In our context, setting up the main ingredients needed to apply
the strategy have led us to developing new ideas. For instance, we use a
promising techniques from the Malliavin calculus to bound the density of the
distance between a path of $B$ and a fixed point; this technique could also be
useful to other solve potential-probabilistic problems in difficult contexts
such as some of the critical non-Markovian ones. We also find a new sharp
estimate of that distance's atom at the origin in one dimension, which is the
probability for a component of $B$ to hit a point, a result of independent
interest. For capacity lower bounds, we derive the intrinsic form of the
potential kernel needed to obtain optimal bounds, as a function of $\gamma$,
with no reference to power scales or indexes; this new formula could inform
the question of how sets transition between being polar and non-polar.

These original new techniques inscribe themselves in a long tradition of
studying fine path properties of stochastic processes, illustrated by the
references listed above. While none of these references can be used directly
to establish the new tools we need in this paper, we describe each reference
here briefly to illustrate the similarities of purposes and of strategies.
Albin \cite{Albin:92} establishes a law of the iterated logarithm for a
general class of real valued stationary process, with no power-scale
restriction. The lower bound proof uses a second moment argument using the
Paley-Zygmund inequality, as often done for capacity lower bounds, including
ours. The upper bound uses a covering argument, similar to one of the keys in
proving Hausdorff measure upper bounds, as we do. In a general-theory context,
Hawkes \cite{Hawkes:77} proposes the idea of using capacity and Hausdorff
measures defined with respect to a general kernel and function; doing so is a
key requirement for our study. Then Hawkes studies the Hausdorff dimension of
the level sets and graph for a Gaussian process with stationary increments; it
is computed in terms of the variance of the increments, again using a second
moment argument for the lower bound and a covering argument for the upper
bound. While the motivation of Hausdorff dimension leads to studying power
scales and indexes, this paper is a precursor for our context. Other works
that use this dual approach of second-moment energy estimate plus covering
argument include: Kahane \cite{Kahane:82}, who studied the multiple points of
stable symmetric L\'{e}vy processes; Testard \cite{Testard:86} who studied the
existence of $k$-multiple points for the $N$-parameter $d$-dimensional fBm;
Weber \cite{Weber:83} who obtained the Hausdorff dimension of the $k$-multiple
times for these fBm, and Xiao \cite{Xiao:99} who proved upper and lower bounds
for the hitting probabilities for these same processes. This last work was
generalized by Bierm\'{e}, Lacaux, and Xiao \cite{Bierme:09} to general
Gaussian processes in the power scale. For a similar class of Gaussian
processes in the power scale, see Kahane \cite{Kahane:85a} and Monrad, Pitt
\cite{Monrad:87}, who compute the Hausdorff dimension of the range, graph, and
level sets.

To illustrate what kinds of Gaussian processes are covered by our study,
consider the following. One extends the RL-fBm via the class of so-called
Gaussian Volterra processes defined as the family containing one process
$B^{\gamma}$ for each function $\gamma$, via a standard Brownian motion $W$
and the formula%
\begin{equation}
B^{\gamma}\left(  t\right)  :=\int_{0}^{t}\sqrt{\left(  \frac{d\gamma^{2}}%
{dt}\right)  \left(  t-s\right)  }dW\left(  s\right)  . \label{VolterraBgamma}%
\end{equation}
If $\gamma^{2}$ is of class $\mathcal{C}^{2}$ on $\mathbb{R}_{+}\setminus0$,
$\lim_{0}\gamma=0$, and $\gamma^{2}$ is increasing and concave ($d\gamma
^{2}/dr$ is non-increasing), then $B^{\gamma}$ satisfies (\ref{e1}) and
(\ref{e1v}) for the fixed function $\gamma$, with $\ell=2$. See
\cite{Mocioalca:04} and \cite{Mocioalca:07}. In addition, it is a simple
matter to produce processes that satisfy the same conditions (\ref{e1}) and
(\ref{e1v}), with possibly different constants $\ell$: in the definition of
$B^{\gamma}$ in (\ref{VolterraBgamma}), replace $\left(  d\gamma
^{2}/dt\right)  \left(  t-s\right)  $ by any function of the pair $\left(
s,t\right)  $ which is bounded above and below by multiples of $\left(
d\gamma^{2}/dt\right)  \left(  t-s\right)  $. All of these processes have the
added bonus that they are adapted to a Brownian filtration. None of them have
stationary increments. When $\gamma$ is not commensurate with a power function
near $0$, the resulting $B^{\gamma}$ is far from being self-similar.

\subsection{Summary of results, and comments}

This paper is devoted to proving upper and lower bounds for the probability
that a $d$-dimensional process $B$ with iid coordinates satisfying conditions
(\ref{e1}) and (\ref{e1v}) hits a given Borel set $A$ in $\mathbb{R}^{d}$, in
terms of certain $\gamma$-dependent Hausdorff measure and capacity of the set
$A$, respectively. In this subsection, we provide a summary of our main results.

In \textbf{Section 2} we show (Theorem \ref{tcap}) that if the function
$\gamma$ in (\ref{e1}) is strictly concave in a neighborhood of zero, then for
all $0<a<b$ and $M>0$, there exists a constant $C>0$ depending only on $a,b,M$
and the law of $B$, such that for any Borel set $A\subset\lbrack-M,M]^{d}$
\begin{equation}
C\mathcal{C}_{\mathrm{K}}(A)\leq\P (B([a,b])\cap A\neq\varnothing
),\label{lowerc}%
\end{equation}
where $\mathcal{C}_{\mathrm{K}}(A)$ denotes the capacity of the set $A$ with
respect to the potential kernel
\begin{equation}
\mathrm{K}(x):=\max\left\{  1;v\left(  \gamma^{-1}\left(  x\right)  \right)
\right\}  ,\qquad v\left(  r\right)  :=\int_{r}^{b-a}ds/\gamma^{d}\left(
s\right)  \label{pot}%
\end{equation}
(see Section 2 for the definition of capacity). In particular, from
(\ref{lowerc}) it follows that if $\gamma$ is strictly concave near zero and
$1/\gamma^{d}$ is integrable at $0$ (i.e. $v$ is bounded), then the process
$B$ hits points (singletons) with positive probability. In Theorem \ref{tcap},
no additional restrictions on $\gamma$ are needed, unlike \cite{Bierme:09} who
require commensurably to a power function.

In preparation for proving Hausdorff measure upper bounds for the hitting
probabilities in Section 4, we establish a result in \textbf{Section 3} for
hitting probabilities in dimension 1, which is interesting in its own right
(see Proposition \ref{hitd1} and Corollary \ref{hitd1cor}): under a mild
technical condition on $\gamma$ which is satisfied for all examples of
interest, the probability for a path of $B$ to hit a point between positive
times $a$ and $b$ is bounded above by a multiple of $\gamma\left(  b-a\right)
$. We use this as one ingredient in the proof of the following Hausdorff
measure upper bound for hitting probabilities (Theorem \ref{t3}), in
\textbf{Section 4}: if the function
\begin{equation}
\varphi(x):=s^{d}/\gamma^{-1}(x)
\end{equation}
is right-continuous and non-decreasing near $0$ with $\lim_{0+}\varphi=0$,
then for all $0<a<b<\infty$ and $M>0$, there exists a constant $C>0$ depending
only on $a,b$, the law of $B$, and $M$, such that for any Borel set
$A\subset\lbrack-M,M]^{d}$,
\begin{equation}
\P ({B}([a,b])\cap A\neq\varnothing)\leq C{\mathcal{H}}_{\varphi
}(A)\label{h11}%
\end{equation}
where ${\mathcal{H}}_{\varphi}$ is the Hausdoff measure based on $\varphi$
(see Section 4 for a definition), as opposed to the classical
power-scale-based Hausdorff ${\mathcal{H}}_{r}$ measure based on the function
$x\mapsto x^{r}$. As we mentioned before, a covering argument is needed to
prove this theorem, similar to what was done in \cite[Theorem 3.1]{Dalang:07}.
In addition to this and to the estimate of Section 3, a new ingredient we use
is the Malliavin-calculus-based estimation of the density of the random
variable $Z:=\inf_{s\in\lbrack a,b]}\left\vert B\left(  s\right)
-z\right\vert $ in one dimension, where $z$ is a fixed point in space. This
random variable has an atom at $0$, which we estimate in Section 3, and a
bounded density elsewhere, which we prove by adapting a quantitative density
formula first established in \cite[Theorem 3.1]{Nourdin:09} which uses the
Malliavin calculus. As a consequence, we show that the probability for $B$'s
path to reach a ball of radius $\varepsilon$ between times $a$ and $b$ is
bounded above by the bound from Section 3 plus a constant multiple of
$\varepsilon$, where the dependence of this constant on $\left\vert
z\right\vert $ is given explicitly.

In the case of fBm when $d>1/H$, it was previously known that the upper
Hausdorff measure bound (\ref{h11}) applies with $\varphi\left(  x\right)
=x^{d-1/H}$, and the capacity lower bound (\ref{lowerc}) uses the Newtonian
kernel $\mathrm{K}\left(  x\right)  =x^{1/H-d}=1/\varphi(x)$, i.e.
$1/\mathrm{K}=\varphi$. At the end of Section 4, we study this phenomenon
further in our general case, to give broad conditions, not related to power
scaling, under which the bounds (\ref{lowerc}) and (\ref{h11}) hold with
$1/\mathrm{K}=\varphi$.

\textbf{Section 5} is devoted to studying examples of applications of our
general theorems. We consider the class of processes satisfying (\ref{e1}) and
(\ref{e1v}) with $\gamma$ defined near $0$ by
\[
\gamma(r)=\gamma_{H,\beta}\left(  r\right)  :=r^{H}\log^{\beta}(\frac{1}{r}),
\]
for some $\beta\in\mathbb{R},H\in(0,1)$, or $H=1,\beta>0$, or $H=0,\beta
<-1/2$. In Theorem \ref{texa} and Corollary \ref{corex1} we prove that the
bounds (\ref{lowerc}) and (\ref{h11}) hold with $1/\mathrm{K}\left(  x\right)
=\varphi\left(  x\right)  =x^{d-\frac{1}{H}}\log^{\beta/H}(1/x)$ as soon as
$d>1/H$, or as soon as $d=1/H$ and $\beta<0$. However, if $d=1/H$ and
$\beta\in\lbrack0,1/d)$, the upper bound is not established, and the function
for the lower bound must be changed to $\varphi(x)=\log^{\beta/H-1}\left(
1/x\right)  $. If $d=1/H$ and $\beta\geq1/d$, or if $d<1/H$, the lower bound
holds with $\varphi\equiv1$. In this last case, this implies that $B$ hits
singletons with positive probability. We show more generally that $B$ hits
singletons with positive probability as soon as $1/\gamma^{d}$ is integrable
at $0$, whereas $B$ hits singletons with probability zero as soon as
$r^{-1}=o\left(  1/\gamma^{d}\left(  r\right)  \right)  $. We finish Section 5
with applications to estimating probabilities of hitting Cantor sets for
various combinations of parameters when $\gamma=\gamma_{H,\beta}$.

The applications in Section 5 are particularly revealing in the so-called
\textquotedblleft critical\textquotedblright\ case, where $H=1/d$. Unlike in
the H\"{o}lder scale, where this critical case is represented only by fBm or
similar processes, here we have an entire scale of processes as $\beta$ ranges
in all of $\mathbb{R}$, the fBm corresponding to $\beta=0$. Our results imply
that if $\beta<0$ (processes that are more regular than critical fBm, while
being indistinguishable from it in the H\"{o}lder scale) then a.s. $B$ does
not hit points, while if $\beta\geq1/d$ (processes that are less regular than
fBm, while also being indistinguishable from it in the H\"{o}lder scale), then
$B$ hits points with positive probability. The gap corresponding to the range
$\beta\in\lbrack0,1/d)$ indicates that there is presumably a slight
inefficiency in at least one of the two estimation methods for hitting
probability (Hausdorff measure and capacity). This thought is not new; what is
more important here is that our theorems give a precise quantification of the
inefficiency: it is of no more than a log order, which, in practice, could be
considered difficult to detect. Arguably, this provides support for continuing
to study both estimation methods in the future.

\section{Lower capacity bound for the hitting probabilities}

Recall that $B=(B(t),t\in\mathbb{R}_{+})$ is a centered continuous Gaussian
process in $\mathbb{R}$ with variance $\gamma^{2}(t)$ for some continuous
strictly increasing function $\gamma:\mathbb{R}_{+}\rightarrow\mathbb{R}_{+}$
with $\lim_{0}\gamma=0$, such that for some constant $\ell\geq1$ and for all
$s,t\geq0$,
\[
(1/\ell)\gamma^{2}(|t-s|)\leq\mathrm{E}[|B(t)-B(s)|^{2}]\leq\ell\gamma
^{2}(|t-s|).
\]

The aim of this section is to obtain a lower bound for the probability that a
$d$-dimensional vector of iid copies of $B$, also denoted by $B$, hits a set
$A\subset\mathbb{R}^{d}$ in terms of the capacity of $A$, a concept from
potential theory. In this context, it is customary to say that any measurable
function $\mathrm{F}:\mathbb{R}^{d}\times\mathbb{R}^{d}\rightarrow(0,\infty]$
can serve as a so-called \emph{potential kernel}. In this article, we will
focus on the case where $\mathrm{F}(x,y)=\mathrm{K}(|x-y|)$, and $\mathrm{K}$
is a positive, non-increasing, continuous function in $\mathbb{R}_{+}%
\setminus0$ with $\lim_{0}\mathrm{K}\leq+\infty$.

The capacity of a Borel set $A\subset\mathbb{R}^{d}$ with respect to a
potential kernel $\mathrm{K}$ is defined as
\[
\mathcal{C}_{\mathrm{K}}(A):=\left[  \inf_{\mu\in\mathcal{P}(A)}{\mathcal{E}%
}_{\mathrm{K}}(\mu)\right]  ^{-1},
\]
where $\mathcal{P}(A)$ denotes the set of probability measures with support in
$A$, and ${\mathcal{E}}_{\mathrm{K}}(\mu)$ denotes the energy of a measure
$\mu\in\mathcal{P}(A)$ with respect to the kernel $\mathrm{K}$, which is
defined as
\[
{\mathcal{E}}_{\mathrm{K}}(\mu):=\iint_{\mathbb{R}^{d}\times\mathbb{R}^{d}%
}\mathrm{K}(|x-y|)\,\mu(dx)\,\mu(dy).
\]
By convention for computing $\mathcal{C}_{\mathrm{K}}(A)$, we let
$1/\infty:=0$. The Newtonian $\beta$-kernel is the potential kernel defined
as
\[
\label{k}\mathrm{K}_{\beta}(r):=%
\begin{cases}
r^{-\beta} & \text{if $\beta>0$},\\
\log\left(  \frac{e}{r\wedge1}\right)  & \text{if $\beta=0$},\\
1 & \text{if $\beta<0$}.
\end{cases}
\]

The $\beta$-capacity of a Borel set $A\subset\mathbb{R}^{d}$, denoted
$\mathcal{C}_{\beta}(A)$, and the $\beta$-energy of a measure $\mu
\in\mathcal{P}(A)$, denoted ${\mathcal{E}}_{\beta}(\mu)$, are the capacity and
the energy with respect to the Newtonian $\beta$-kernel $\mathrm{K}_{\beta}$.

Let us consider the following additional hypotheses on $\gamma$.

\begin{hyp}
\label{h0} Recall the constant $\ell$ in (\ref{e1}). The increasing function
$\gamma$ in (\ref{e1}) is concave in a neighborhood of the origin, and for all
$0<a<b<\infty$, there exists $\varepsilon>0$ such that $\gamma^{\prime}\left(
\varepsilon+\right)  >\sqrt{\ell}~\gamma^{\prime}\left(  a-\right)  $.
\end{hyp}

\begin{hyp}
\label{h1} Recall the constant $\ell$ in (\ref{e1}). For all $0<a<b<\infty$,
there exists $\varepsilon>0$ and $c_{0}\in(0,1/\sqrt{\ell})$, such that for
all $s,t\in\lbrack a,b]$ with $0<t-s\leq\varepsilon$,
\[
\gamma(t)-\gamma(s)\leq c_{0}\gamma(t-s).
\]

\end{hyp}

Since a concave function has a derivative almost everywhere with finite left
and right limits for this derivative everywhere, the strict inequality in
Hypothesis \ref{h0} is simply saying that $\gamma$ is strictly concave near
the origin. In all the examples that have been mentioned up to now, and the
ones which we consider later in this article (see Section 5, in particular),
we have $\gamma^{\prime}\left(  0+\right)  =+\infty$, from which Hypothesis
\ref{h0} reduces to simply requiring concavity near the origin. That concavity
is satisfied in all examples we look at.

To construct a Gaussian process which fails to satisfy Hypothesis \ref{h0},
one has to resort to processes which are somewhat pathological, and whose
smoothness are such that hitting probabilities become trivial. For instance,
the fBm $B^{1}$ with parameter $H=1$ satisfies $\mathrm{E}\left[  \left(
B^{1}\left(  t\right)  -B^{1}\left(  s\right)  \right)  ^{2}\right]
=\left\vert t-s\right\vert ^{2}$ which implies that there exists a standard
normal rv $Z$ such that $B^{1}\left(  t\right)  =tZ$, and we have
$\gamma\left(  r\right)  =r$ in (\ref{e1}).

Hypothesis \ref{h1} implies a lower bound for the conditional variance of the
random variable $B(t)$ given $B(s)$, provided in Lemma \ref{lem1} below. This
property of the conditional variance can be referred to as a two-point local
nondeterminism (see \cite{Bierme:09}). We first prove that Hypothesis \ref{h0}
implies Hypothesis \ref{h1}, and that under the stronger assumption (satisfied
in all our examples) that $\gamma^{\prime}\left(  0+\right)  =+\infty$, the
constant $c_{0}$ in Hypothesis \ref{h1} can be chosen arbitrarily small.

\begin{lemma}
\label{lem0}Hypothesis \ref{h0} implies Hypothesis \ref{h1}. If moreover
$\gamma^{\prime}\left(  0+\right)  =+\infty$, then for all $0<a<b<\infty$, and
all $c_{0}>0$, there exists $\varepsilon>0$ such that for all $s,t\in\lbrack
a,b]$ with $0<t-s\leq\varepsilon$, $\gamma(t)-\gamma(s)\leq c_{0}\gamma(t-s).$
\end{lemma}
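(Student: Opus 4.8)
The plan is to work with the concavity of $\gamma$ near the origin and compare increments $\gamma(t)-\gamma(s)$ with $\gamma(t-s)$ through the mean value theorem. Fix $0<a<b<\infty$. By Hypothesis \ref{h0} there is $\varepsilon_0>0$ such that $\gamma$ is concave on $[0,\varepsilon_0]$, and $\gamma'$ exists off a countable set with monotone (non-increasing) left and right limits everywhere on that interval. For $s<t$ in $[a,b]$ with $t-s\le\varepsilon$ (where $\varepsilon$ is to be chosen, in particular $\varepsilon<\varepsilon_0$), I would write, using that $\gamma$ is absolutely continuous on compact subintervals of $(0,\infty)$ where it is concave (hence locally Lipschitz away from $0$),
\[
\gamma(t)-\gamma(s)=\int_s^t \gamma'(u)\,du\le (t-s)\,\gamma'(s+)\le (t-s)\,\gamma'(a-),
\]
the last inequality because $\gamma'$ is non-increasing near the origin and $s\ge a$. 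Separately, concavity with $\gamma(0)=0$ gives $\gamma(t-s)\ge (t-s)\,\gamma'((t-s)-)\ge (t-s)\,\gamma'(\varepsilon-)$ for $t-s\le\varepsilon$; more usefully, again by concavity and $\gamma(0)=0$, the chord slope dominates the right derivative, so $\gamma(r)/r\ge \gamma'(r-)$, and for small $r$ the ratio $\gamma(r)/r$ is large. Combining,
\[
\frac{\gamma(t)-\gamma(s)}{\gamma(t-s)}\le \frac{(t-s)\,\gamma'(a-)}{\gamma(t-s)}=\frac{\gamma'(a-)}{\gamma(t-s)/(t-s)}.
\]
So the task reduces to making $\gamma(r)/r$ large relative to $\gamma'(a-)$ when $r=t-s$ is small.

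For the first assertion, I would invoke exactly the strict-concavity clause of Hypothesis \ref{h0}: there is $\varepsilon_1>0$ with $\gamma'(\varepsilon_1+)>\sqrt{\ell}\,\gamma'(a-)$. Since $\gamma'$ is non-increasing, $\gamma'(r-)\ge \gamma'(\varepsilon_1-)\ge\gamma'(\varepsilon_1+)>\sqrt\ell\,\gamma'(a-)$ for all $r\le\varepsilon_1$, hence $\gamma(r)/r\ge\gamma'(r-)>\sqrt\ell\,\gamma'(a-)$. Plugging into the displayed ratio with $r=t-s\le\varepsilon:=\min(\varepsilon_0,\varepsilon_1,b-a)$ gives
\[
\gamma(t)-\gamma(s)\le \frac{\gamma'(a-)}{\sqrt\ell\,\gamma'(a-)}\,\gamma(t-s)=\frac{1}{\sqrt\ell}\,\gamma(t-s),
\]
and since we need $c_0\in(0,1/\sqrt\ell)$ strictly, I would instead use that $\gamma'(\varepsilon_1+)>\sqrt\ell\,\gamma'(a-)$ is a strict inequality, so the ratio is bounded by some $c_0<1/\sqrt\ell$; this establishes Hypothesis \ref{h1}.

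For the second assertion, assume additionally $\gamma'(0+)=+\infty$. Then $\gamma(r)/r\ge\gamma'(r-)\to+\infty$ as $r\downarrow 0$, so given any $c_0>0$ I can pick $\varepsilon>0$ (also $\le\varepsilon_0$ and $\le b-a$) small enough that $\gamma(r)/r\ge \gamma'(a-)/c_0$ for all $0<r\le\varepsilon$; the displayed ratio bound then yields $\gamma(t)-\gamma(s)\le c_0\,\gamma(t-s)$ for all $s,t\in[a,b]$ with $0<t-s\le\varepsilon$. The only genuinely delicate point is justifying the fundamental-theorem-of-calculus step $\gamma(t)-\gamma(s)=\int_s^t\gamma'(u)\,du$ together with $\gamma'(u)\le\gamma'(s+)$ on $(s,t)$; this is standard for concave functions (a concave function on an open interval is locally Lipschitz and its derivative, defined a.e., is non-increasing and integrates back to the function), and I would simply cite the relevant fact about concave functions rather than reprove it. I expect this bookkeeping with one-sided derivatives — and being careful that $a>0$ so that we stay in the region where $\gamma$ is finite-Lipschitz — to be the main (minor) obstacle; everything else is a short chain of inequalities driven by concavity and $\gamma(0)=0$.
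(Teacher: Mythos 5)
Your argument is correct and is essentially the paper's proof: both bound the increment slope by $\gamma'(a-)$ via concavity, bound $\gamma(t-s)/(t-s)$ below by the derivative near $\varepsilon$ using $\gamma(0)=0$, and take $c_0=\gamma'(a-)/\gamma'(\varepsilon+)<1/\sqrt{\ell}$ (resp.\ let $\varepsilon\downarrow 0$ with $\gamma'(0+)=+\infty$ for the second claim). The only cosmetic difference is that you route the first inequality through the fundamental theorem of calculus for concave functions, whereas the paper uses the chord-slope (three-slope) inequality directly; the content is identical.
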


\begin{proof}
By concavity of $\gamma$, and the fact that $\gamma\left(  0\right)  =0$, for
any $s,t\geq a$ such that $0<t-s\leq\varepsilon$
\[
\frac{\gamma\left(  t\right)  -\gamma\left(  s\right)  }{t-s}\leq
\gamma^{\prime}\left(  a-\right)  =\gamma^{\prime}\left(  \varepsilon+\right)
\frac{\gamma^{\prime}\left(  a-\right)  }{\gamma^{\prime}\left(
\varepsilon+\right)  }\leq\frac{\gamma\left(  t-s\right)  }{t-s}\cdot
\frac{\gamma^{\prime}\left(  a-\right)  }{\gamma^{\prime}\left(
\varepsilon+\right)  }%
\]
which proves the conclusion of Hypothesis \ref{h1} by taking $c_{0}%
:=\gamma^{\prime}\left(  a-\right)  /\gamma^{\prime}\left(  \varepsilon
+\right)  $ which is strictly less than $1/\sqrt{\ell}$ by Hypothesis
\ref{h0}. The second statement of the lemma follows using the same proof by
noting that for any fixed $a>0$, one can make $\gamma^{\prime}\left(
a-\right)  /\gamma^{\prime}\left(  \varepsilon+\right)  $ as small as desired
by choosing $\varepsilon$ small enough.
\end{proof}

\begin{lemma}
\label{lem1} Assume Hypothesis \ref{h1}. Then for all $0<a<b<\infty$, there
exists $\varepsilon>0$ and a positive constant $c(a,b)$ depending only on
$a,b$ and the law of the scalar process $B$, such that for all $s,t\in\lbrack
a,b]$ with $|t-s|\leq\varepsilon$,
\begin{equation}
\mathrm{Var}(B(t)|B(s))\geq c\left(  a,b\right)  \gamma^{2}(|t-s|). \label{20}%
\end{equation}

More specifically, assume simply that $\gamma$ is concave in a neighborhood of
the origin and $\gamma^{\prime}\left(  0+\right)  =+\infty$; then the above
conclusion holds for any $c\left(  a,b\right)  <\gamma^{4}\left(  a\right)
/\left(  2\ell~\gamma^{4}\left(  b\right)  \right)  $, for some $\varepsilon
>0$ small enough.
\end{lemma}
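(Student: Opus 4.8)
The plan is to compute the conditional variance $\mathrm{Var}(B(t)\mid B(s))$ explicitly using the Gaussian regression formula and then bound it from below using the covariance structure encoded in (\ref{e1}) and (\ref{e1v}). First I would recall that for jointly Gaussian $(B(s),B(t))$ with $\mathrm{Var}(B(s))=\gamma^2(s)$, $\mathrm{Var}(B(t))=\gamma^2(t)$, and covariance $\rho:=\mathrm{E}[B(s)B(t)]$, one has
\[
\mathrm{Var}(B(t)\mid B(s))=\gamma^2(t)-\frac{\rho^2}{\gamma^2(s)}=\frac{\gamma^2(s)\gamma^2(t)-\rho^2}{\gamma^2(s)}.
\]
The next step is to control $\rho$. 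From (\ref{e1}) applied to the increment $B(t)-B(s)$ together with (\ref{e1v}), one gets $2\rho=\gamma^2(s)+\gamma^2(t)-\mathrm{E}[|B(t)-B(s)|^2]$, so that
\[
\gamma^2(s)+\gamma^2(t)-\ell\gamma^2(t-s)\le 2\rho\le \gamma^2(s)+\gamma^2(t)-(1/\ell)\gamma^2(t-s).
\]
This pins $\rho$ between two explicit quantities, and in particular (using that $\gamma$ is increasing and $\gamma(t-s)\le\gamma(b)$ is small) keeps $\rho$ positive for $s,t$ close, so squaring is safe.

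The heart of the argument is then the algebraic lower bound for the numerator $N:=\gamma^2(s)\gamma^2(t)-\rho^2=(\gamma(s)\gamma(t)-\rho)(\gamma(s)\gamma(t)+\rho)$. Writing $\rho=\gamma(s)\gamma(t)-\delta$ where $\delta:=\gamma(s)\gamma(t)-\rho\ge 0$, one has $N=\delta(2\gamma(s)\gamma(t)-\delta)$. I would lower-bound $\delta$: from $2\rho\le \gamma^2(s)+\gamma^2(t)-(1/\ell)\gamma^2(t-s)$ we get $2\delta\ge 2\gamma(s)\gamma(t)-\gamma^2(s)-\gamma^2(t)+(1/\ell)\gamma^2(t-s)=(1/\ell)\gamma^2(t-s)-(\gamma(t)-\gamma(s))^2$. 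Here Hypothesis \ref{h1} (or the refined consequence in Lemma \ref{lem0}, valid since $\gamma'(0+)=+\infty$) enters: choosing $\varepsilon$ so that $\gamma(t)-\gamma(s)\le c_0\gamma(t-s)$ with $c_0<1/\sqrt\ell$ gives $(\gamma(t)-\gamma(s))^2\le c_0^2\gamma^2(t-s)$, hence
\[
2\delta\ge \Bigl(\frac1\ell-c_0^2\Bigr)\gamma^2(t-s)>0.
\]
For the refined statement, making $c_0$ arbitrarily small pushes the factor $1/\ell-c_0^2$ arbitrarily close to $1/\ell$. Then $N\ge\delta\,\gamma(s)\gamma(t)\ge \tfrac12(1/\ell-c_0^2)\gamma^2(t-s)\,\gamma(s)\gamma(t)$ once $\varepsilon$ is small enough that $\delta\le\gamma(s)\gamma(t)$ (so $2\gamma(s)\gamma(t)-\delta\ge\gamma(s)\gamma(t)$), which holds because $\delta\le\gamma(s)\gamma(t)$ is equivalent to $\rho\ge 0$. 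Finally dividing by $\gamma^2(s)$ and using monotonicity of $\gamma$ ($\gamma(s)\ge\gamma(a)$, $\gamma(t)\le\gamma(b)$, $\gamma(s)\le\gamma(b)$) yields
\[
\mathrm{Var}(B(t)\mid B(s))=\frac{N}{\gamma^2(s)}\ge \frac{(1/\ell-c_0^2)}{2}\,\frac{\gamma(s)\gamma(t)}{\gamma^2(s)}\,\gamma^2(t-s)\ge \frac{(1/\ell-c_0^2)}{2}\,\frac{\gamma(a)}{\gamma(b)}\,\gamma^2(|t-s|),
\]
and letting $c_0\downarrow 0$ gives the constant $c(a,b)$ arbitrarily close to $\gamma(a)/(2\ell\,\gamma(b))$; a slightly more careful bookkeeping (keeping $\gamma(s)\gamma(t)/\gamma^2(s)\ge \gamma^2(a)/\gamma^2(b)$ is too lossy — instead keep $\gamma(t)/\gamma(s)\ge\gamma(a)/\gamma(b)$ only, or rather bound $N/\gamma^2(s)\ge \delta\,\gamma(t)/\gamma(s)$) produces the stated threshold $\gamma^4(a)/(2\ell\,\gamma^4(b))$.

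The main obstacle is purely bookkeeping: getting the constant sharp enough to reach $\gamma^4(a)/(2\ell\,\gamma^4(b))$ rather than some cruder ratio, and making sure all the ``for $\varepsilon$ small enough'' conditions (positivity of $\rho$, smallness of $\delta$ relative to $\gamma(s)\gamma(t)$, and the Hypothesis \ref{h1} inequality) can be met simultaneously on $[a,b]$ with a single $\varepsilon$. Since $\gamma$ is continuous and increasing with $\gamma(0)=0$, and $[a,b]$ is compact with $a>0$, each of these is a matter of choosing $\varepsilon$ small; uniformity over $s,t\in[a,b]$ with $|t-s|\le\varepsilon$ is automatic. No deep input is needed beyond the Gaussian regression identity, (\ref{e1}), (\ref{e1v}), and Lemma \ref{lem0}; the first assertion of the lemma follows from the same computation using Hypothesis \ref{h1} directly (with its given $c_0<1/\sqrt\ell$), without needing $\gamma'(0+)=+\infty$.
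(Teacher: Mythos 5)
Your proof is correct and follows essentially the same route as the paper: the Gaussian regression identity, the relation $2\sigma(s,t)=\gamma^{2}(s)+\gamma^{2}(t)-\mathrm{E}[(B(t)-B(s))^{2}]$, and the factorization of $\gamma^{2}(s)\gamma^{2}(t)-\sigma^{2}(s,t)$ into two factors, with Hypothesis \ref{h1} (and Lemma \ref{lem0} for the refined statement) absorbing the $(\gamma(t)-\gamma(s))^{2}$ term. The only difference is bookkeeping: you lower-bound the factor $\gamma(s)\gamma(t)+\sigma(s,t)$ by $\gamma(s)\gamma(t)$ using positivity of the covariance for $\varepsilon$ small, where the paper lower-bounds $1+\rho(s,t)$ by $\gamma^{2}(a)/\gamma^{2}(b)$, and your version in fact gives the slightly larger constant $(1/\ell-c_{0}^{2})\gamma(a)/(2\gamma(b))$, which implies the stated threshold $\gamma^{4}(a)/(2\ell\gamma^{4}(b))$.
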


\begin{proof}
Recall that
\[
\mathrm{Var}(B(t)|B(s))=\gamma^{2}(t)(1-\rho^{2}(s,t)),
\]
where $\rho(s,t)$ denotes the correlation coefficient between $B(s)$ and
$B(t)$, that is,
\[
\rho(s,t):=\frac{\sigma(s,t)}{\gamma(s)\gamma(t)},
\]
and $\sigma(s,t)$ denotes the covariance of $B(s)$ and $B(t)$, that is,
\[
\sigma(s,t):=\mathrm{E}[B(s)B(t)].
\]
None of these functions depend on $i$.

Hence, as $\gamma$ is increasing, $\gamma^{2}\left(  t\right)  \geq\gamma
^{2}\left(  a\right)  $ and it suffices to find a lower bound for $1-\rho
^{2}(s,t)$. Using the lower bound in (\ref{e1}), we have that
\[%
\begin{split}
1-\rho(s,t)  &  =\frac{\gamma(s)\gamma(t)-\frac{1}{2}(\gamma^{2}(t)+\gamma
^{2}(s)-\delta^{2}(s,t))}{\gamma(s)\gamma(t)}\\
&  =\frac{\delta^{2}(s,t)-(\gamma(t)-\gamma(s))^{2}}{2\gamma(s)\gamma(t)}\\
&  \geq\frac{(1/\ell)\gamma^{2}(|t-s|)-(\gamma(t)-\gamma(s))^{2}}%
{2\gamma(s)\gamma(t)},
\end{split}
\]
where $\delta^{2}(s,t):=\mathrm{E}[(B(t)-B(s))^{2}]$.

Next, appealing to Hypothesis \ref{h1} and the fact that $\gamma$ is
increasing, we get that for all $s,t\in\lbrack a,b]$ with $|t-s|\leq
\varepsilon$,
\begin{align*}
1-\rho(s,t)  &  \geq\frac{\frac{1}{\ell}-c_{0}^{2}}{2\gamma\left(  s\right)
\gamma\left(  t\right)  }\gamma^{2}(|t-s|)\\
&  \geq\frac{\frac{1}{\ell}-c_{0}^{2}}{2\gamma^{2}\left(  b\right)  }%
\gamma^{2}(|t-s|).
\end{align*}
On the other hand, by (\ref{e1}) and the fact that $\gamma$ is increasing, for
all $s,t\in\lbrack a,b]$ with $|t-s|\leq\varepsilon$,
\begin{align*}
1+\rho(s,t)  &  =\frac{(\gamma(t)+\gamma(s))^{2}-\delta^{2}(s,t)}%
{2\gamma(s)\gamma(t)}\\
&  \geq\frac{2\gamma^{2}\left(  a\right)  }{\gamma^{2}\left(  b\right)
}-\frac{\ell}{2\gamma^{2}\left(  a\right)  }\gamma^{2}\left(  \varepsilon
\right)  .
\end{align*}
Since $\lim_{0}\gamma=0$, we can choose $\varepsilon$ sufficiently small such
that the last displayed line above is bounded below by $\gamma^{2}\left(
a\right)  /\gamma^{2}\left(  b\right)  $. Therefore, for such $\varepsilon$,
and all $\left\vert t-s\right\vert <\varepsilon$, $s,t\in\lbrack a,b]$, we
have%
\[
1-\rho^{2}(s,t)=(1+\rho(s,t))(1-\rho(s,t))\geq\frac{\gamma^{2}\left(
a\right)  }{\gamma^{2}\left(  b\right)  }\frac{\frac{1}{\ell}-c_{0}^{2}%
}{2\gamma^{2}\left(  b\right)  }\gamma^{2}(|t-s|),
\]
which concludes the proof of the lemma's first statement. The lemma's second
statement also follows because, by Lemma \ref{lem0} and the discussion
following the introduction of Hypotheses \ref{h0} and \ref{h1}, we can choose
$c_{0}$ above arbitrarily small.
\end{proof}

We next obtain a lower bound for the hitting probabilities of $B$ in terms of
capacity. Our proof employs a strategy developed in \cite[Theorem
2.1]{Bierme:09} where the authors obtain a lower bound in terms of capacity,
for the hitting probabilities of a general class of multi-parameter
anisotropic Gaussian random fields within a power scale (see also
\cite{Testard:86}). Unlike in \cite[Theorem 2.1]{Bierme:09} our result is not
formulated using -- and our proofs are not based on -- proximity in law to a
process with self-similar increments (the power scale), or on related concepts
such as H\"{o}lder continuity or function index. The capacity kernel
identified in the next result is intrinsic to the law of the Gaussian process
$B$, since it is computed using only the function $\gamma$ in assumptions
(\ref{e1}) and (\ref{e1v}), not an exogenously identified H\"{o}lder exponent
or index.

\begin{thm}
\label{tcap}Let $\mathrm{K}(x):=\max\left\{  1;v\left(  \gamma^{-1}\left(
x\right)  \right)  \right\}  $ where $v\left(  r\right)  :=\int_{r}%
^{b-a}ds/\gamma^{d}\left(  s\right)  $. Assume Hypothesis \ref{h0} holds. Then
for all $0<a<b<\infty$ and $M>0$, there exists a constant $C>0$ depending only
on $a,b,M$ and the law of $B$, such that for any Borel set $A\subset
\lbrack-M,M]^{d}$
\[
C\mathcal{C}_{\mathrm{K}}(A)\leq\P (B([a,b])\cap A\neq\varnothing).
\]

\end{thm}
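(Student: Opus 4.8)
The plan is to use the classical second-moment (energy) method, following the strategy of \cite[Theorem 2.1]{Bierme:09} but with all scales expressed intrinsically through $\gamma$. By inner regularity of capacity it suffices to treat compact sets $A$ with $\mathcal{C}_{\mathrm{K}}(A)>0$, and by rescaling the time interval we may reduce to a convenient $[a,b]$. Fix a probability measure $\mu\in\mathcal{P}(A)$ with finite $\mathrm{K}$-energy $\mathcal{E}_{\mathrm{K}}(\mu)$; it is enough to construct, on a suitable probability space, a nonnegative random variable $I_\varepsilon$ (a smoothed occupation-type functional) with $\mathrm{E}[I_\varepsilon]\geq c_1>0$ and $\mathrm{E}[I_\varepsilon^2]\leq c_2\,\mathcal{E}_{\mathrm{K}}(\mu)$, with $c_1,c_2$ depending only on $a,b,M$ and the law of $B$. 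Then the Paley–Zygmund inequality gives $\P(I_\varepsilon>0)\geq c_1^2/(c_2\mathcal{E}_{\mathrm{K}}(\mu))$, and letting $\varepsilon\to0$ and taking the supremum over $\mu$ yields $\P(B([a,b])\cap A\neq\varnothing)\geq C\,\mathcal{C}_{\mathrm{K}}(A)$.

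Concretely I would set, for a smooth approximate identity $(p_\varepsilon)_{\varepsilon>0}$ on $\mathbb{R}^d$ (e.g. Gaussian densities of variance $\varepsilon$),
\[
I_\varepsilon:=\int_a^b\!\!\int_{\mathbb{R}^d} p_\varepsilon\bigl(B(s)-y\bigr)\,\mu(dy)\,ds .
\]
The first-moment bound is routine: using $\mathrm{Var}\,B(s)=\gamma^2(s)\leq\gamma^2(b)$ and $|y|\leq M$, the Gaussian density of each $B(s)$ is bounded below on the relevant region, so $\mathrm{E}[I_\varepsilon]$ is bounded below by a positive constant uniformly in $\varepsilon$ and $\mu\in\mathcal{P}(A)$. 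The second moment is the crux:
\[
\mathrm{E}[I_\varepsilon^2]=\int_a^b\!\!\int_a^b\!\!\iint_{\mathbb{R}^d\times\mathbb{R}^d}\mathrm{E}\bigl[p_\varepsilon(B(s)-x)p_\varepsilon(B(t)-y)\bigr]\,\mu(dx)\,\mu(dy)\,ds\,dt .
\]
Since the $d$ coordinates of $B$ are iid, the inner expectation factorizes over coordinates; for each coordinate one integrates the two-dimensional Gaussian density of $(B(s),B(t))$ against the two Gaussian bumps and bounds the result, after sending $\varepsilon\to0$ (or keeping $\varepsilon>0$ and controlling uniformly), by a constant times $\bigl(\mathrm{Var}(B(t)\,|\,B(s))\bigr)^{-1/2}\exp\bigl(-c(x_i-y_i)^2/\mathrm{Var}(B(t)|B(s))\bigr)$. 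Applying Lemma \ref{lem1}, $\mathrm{Var}(B(t)|B(s))\geq c(a,b)\gamma^2(|t-s|)$ for $|t-s|\leq\varepsilon_0$, so the $d$-fold product is bounded by $C\,\gamma^{-d}(|t-s|)\exp\bigl(-c|x-y|^2/\gamma^2(|t-s|)\bigr)$; for $|t-s|>\varepsilon_0$ the conditional variance is bounded below by a positive constant and the corresponding contribution is harmless.

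It then remains to show $\int_a^b\!\int_a^b \gamma^{-d}(|t-s|)\exp\bigl(-c|x-y|^2/\gamma^2(|t-s|)\bigr)\,ds\,dt\leq C\,\mathrm{K}(|x-y|)$ uniformly in $x,y\in[-M,M]^d$, after which Tonelli finishes the estimate $\mathrm{E}[I_\varepsilon^2]\leq C\,\mathcal{E}_{\mathrm{K}}(\mu)$. This is a one-variable computation: writing $r=|t-s|$ the double integral is at most $C\int_0^{b-a}\gamma^{-d}(r)\exp(-c\,|x-y|^2/\gamma^2(r))\,dr$; substituting and using that $\gamma$ is increasing and strictly concave near $0$ (so $\gamma^{-1}$ is well-behaved and convex), one splits at $r\sim\gamma^{-1}(|x-y|)$: for $r$ above this level the exponential is bounded by $1$ and the integral of $\gamma^{-d}$ contributes $v(\gamma^{-1}(|x-y|))$; for $r$ below it the exponential decay controls the (possibly non-integrable) singularity of $\gamma^{-d}$ at $0$ and contributes a bounded amount, giving the $\max\{1;v(\gamma^{-1}(|x-y|))\}=\mathrm{K}(|x-y|)$ bound. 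I expect this last analytic lemma — extracting exactly the kernel $\mathrm{K}$ from the Gaussian integral using only concavity of $\gamma$, with no power-scaling — to be the main obstacle, together with making the $\varepsilon\to0$ passage rigorous; the probabilistic Paley–Zygmund step and the first-moment bound are standard.
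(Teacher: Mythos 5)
Your overall architecture is the same as the paper's (Gaussian-smoothed occupation measure, first/second moment bounds, two-point local nondeterminism via Lemma \ref{lem1}, Paley--Zygmund), but there is a genuine gap at exactly the step you identify as the crux, and your sketched resolution of it is wrong. After the substitution $r=|t-s|$, the sub-level part of the kernel integral, $\int_0^{\gamma^{-1}(\rho)}\gamma^{-d}(r)\exp\bigl(-c\rho^2/\gamma^2(r)\bigr)\,dr$ with $\rho=|x-y|$, is \emph{not} bounded by a constant. Using $u^{d/2}e^{-cu}\leq C'$ it is of order $\rho^{-d}\gamma^{-1}(\rho)$, and already for $\gamma(r)=r^H$ with $d>1/H$ this equals $\rho^{1/H-d}\to\infty$ as $\rho\to0$; the exponential only tames the singularity of $\gamma^{-d}$ at $0$, it does not make the contribution $O(1)$. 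What is actually needed is the comparison $r/\gamma^{d}(r)\leq C\,\max\{1;v(r)\}$ near $0$ (with $r=\gamma^{-1}(\rho)$), so that this term is absorbed into $\mathrm{K}(\rho)$ alongside the super-level contribution $v(\gamma^{-1}(\rho))$. This is precisely where Hypothesis \ref{h0} enters beyond Lemma \ref{lem1}: the paper treats separately the case $v$ bounded (where $1/\gamma^d$ integrable at $0$ gives $\limsup_{0+}(r/\gamma^d(r))<\infty$) and $v$ unbounded, where a one-sided l'H\^opital argument together with concavity of $\gamma$ and $\gamma(0)=0$ (hence $r\gamma'(r-)\leq\gamma(r)$) yields $\limsup_{0+}\bigl(r/\gamma^d(r)\bigr)/v(r)\leq d-1<\infty$. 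Without this comparison the energy bound $\mathrm{E}[I_\varepsilon^2]\leq C\,\mathcal{E}_{\mathrm{K}}(\mu)$ is not established, and since this is the only place strict concavity is used in an essential quantitative way, it cannot be waved through.

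A secondary, more routine issue: with Gaussian bumps $p_\varepsilon$ the functional $I_\varepsilon$ is strictly positive almost surely, so $\P(I_\varepsilon>0)$ is $1$ and does not bound the hitting probability; "letting $\varepsilon\to0$" must be implemented as in the paper, by extracting a subsequence of the random measures $\nu_n(dt)=\int(2\pi n)^{d/2}\exp(-n|B(t)-x|^2/2)\,\mu(dx)\,dt$ converging weakly to a finite measure $\nu$ supported on $[a,b]\cap B^{-1}(A)$ that still satisfies the two moment bounds, and then applying Paley--Zygmund to $|\nu|$. Also, your "harmless" off-diagonal contribution does need the (easy) remark that $\mathrm{K}\geq1$, so that the constant term is dominated by $\mathcal{E}_{\mathrm{K}}(\mu)$. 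These two points are standard; the substantive missing ingredient is the concavity-based domination of $r/\gamma^d(r)$ by $\max\{1;v(r)\}$ described above.
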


\begin{rmk}
Recall that Hypothesis \ref{h0} holds as soon as $\gamma$ is concave near $0$
with $\gamma^{\prime}\left(  0+\right)  =+\infty$.
\end{rmk}

\begin{rmk}
\label{hitprem}When $1/\gamma^{d}$ is integrable at $0$, $\mathrm{K}$ is
bounded by $\mathrm{K}_{\infty}:=\max\left(  1;\int_{0}^{b-a}1/\gamma
^{d}\right)  $, and therefore, replacing $C$ by $C/\mathrm{K}_{\infty}$, we
may replace $\mathcal{C}_{\mathrm{K}}(A)$ by $\mathcal{C}_{1}(A)$ in Theorem
\ref{tcap}. As $\mathcal{C}_{1}(A)=1$ for every non-empty set $A$, Theorem
\ref{tcap} shows that if $1/\gamma^{d}$ is integrable at $0$, then the process
$B$ hits points with positive probability.
\end{rmk}

\begin{rmk}
\label{GermanRem}It is useful to compare our condition $\int_{0}\gamma
^{-d}\left(  r\right)  dr<\infty$ in Remark \ref{hitprem} for hitting points
to a classical strategy for establishing such non-polarity of points, whose
elements are in German and Horowitz \cite{German:80}. If condition
\cite[(21.10)]{German:80} holds, by \cite[Theorem 21.9]{German:80}, the
process $B$ has a square integrable local time. Moreover if $B$ has the local
non-determinism property and condition \cite[(25.13)]{German:80} holds, then
\cite[Proposition (25.12)(c) and Theorem (26.1)]{German:80} imply that the
local time of $B$ is jointly continuous. Then a fairly classical argument can
be used to prove that the two conditions \cite[(21.10) and (25.13)]{German:80}
and would imply that the process $B$ hits points in $\mathbb{R}^{d}$ with
positive probability.

Our assumption $\int_{0}\gamma^{-d}\left(  r\right)  dr<\infty$ implies
condition \cite[(21.10)]{German:80}. Thus, to compare the assumptions in the
classical local-time-based strategy to our assumption for hitting points, it
is sufficient to study the relationship bewteen condition \cite[(25.13)]%
{German:80} and our condition $\int_{0}\gamma^{-d}\left(  r\right)  dr<\infty
$. Let $\Gamma(t,s)$ denote the covariance matrix of $B_{t}-B_{s}$ and
$\Delta(t,s)$ its determinant; since the components of $B$ are independent,
$\Delta\left(  t,s\right)  $ is commensurate with $\gamma\left(  t-s\right)
^{2d}$. Condition \cite[(25.13)]{German:80} requires that there exist
$\varepsilon\in(0,1]$ such that
\[
\sup_{s\in\lbrack0,1]}\int_{0}^{1}\frac{dt}{(\Delta(t,s))^{\frac{1}%
{2}+\epsilon}}<\infty
\]
The integral above is bounded below by $\ell^{-1}\int_{0}^{1}\gamma\left(
r\right)  ^{-d-\varepsilon/2}dr$. We conclude that condition \cite[(25.13)]%
{German:80} implies a stronger assumption than our condition $\int_{0}%
\gamma^{-d}\left(  r\right)  dr<\infty$.
\end{rmk}

\begin{proof}
[Proof of Theorem \ref{tcap}]\emph{Step 1: Setup and general strategy.}

Note that $\mathrm{K}$, as defined in the statement of the theorem, is a
bonafide univariate potential kernel since it is non-increasing and continuous
on $\mathbb{R}_{+}\setminus0$ with $\lim_{0}\mathrm{K=K}_{\infty}=\max\left(
1;\int_{0}^{b-a}\gamma^{-d}\right)  \leq+\infty$. Assume that $\mathcal{C}%
_{\mathrm{K}}(A)>0$ otherwise there is nothing to prove. This implies the
existence of a probability measure $\mu\in\mathcal{P}(A)$ such that
\begin{equation}
\mathcal{E}_{\mathrm{K}}(\mu)\leq\frac{2}{\mathcal{C}_{\mathrm{K}}(A)}.
\label{cap2}%
\end{equation}

Consider the sequence of random measures $(\nu_{n})_{n \geq1}$ on $[a,b]$
defined as
\[
\nu_{n}(dt)=\int_{\mathbb{R}^{d}} (2 \pi n)^{d/2} \exp\left(  -\frac{n \vert
B(t)-x \vert^{2}}{2}\right)  \mu(dx) dt.
\]
By the Fourier inversion theorem
\[
\nu_{n}(dt)=\int_{\mathbb{R}^{d}} \int_{\mathbb{R}^{d}} \exp\left(
-\frac{\vert\xi\vert^{2}}{2n}+i \langle\xi, B(t)-x \rangle\right)  d\xi\mu(dx)
dt.
\]
Denote the total mass of $\nu_{n}$ by $\vert\nu_{n} \vert:=\nu_{n}([a,b])$. We
claim that
\begin{equation}
\label{claim}\mathrm{E}(\vert\nu_{n} \vert) \geq c_{1}, \qquad\text{and}
\qquad\mathrm{E}(\vert\nu_{n} \vert^{2}) \leq c_{2} {\mathcal{E}}_{\mathrm{K}%
}(\mu),
\end{equation}
where the constants $c_{1}$ and $c_{2}$ are independent of $n$ and $\mu$.

We first have
\[%
\begin{split}
\mathrm{E}(\vert\nu_{n} \vert)  &  =\int_{a}^{b} \int_{\mathbb{R}^{d}}
\int_{\mathbb{R}^{d}} \exp\left(  -\frac{\vert\xi\vert^{2}}{2}\left(  \frac
{1}{n}+\gamma^{2}(t)\right)  -i \langle\xi, x \rangle\right)  d\xi\mu(dx) dt\\
&  \geq\int_{a}^{b} \int_{\mathbb{R}^{d}} \frac{(2 \pi)^{d/2}}{(1+\gamma
^{2}(t))^{d/2}} \exp\left(  -\frac{\vert x \vert^{2}}{2 \gamma^{2}(t)}\right)
\mu(dx) dt\\
&  \geq\int_{a}^{b} \frac{(2 \pi)^{d/2}}{(1+\gamma^{2}(b)^{d/2}} \exp\left(
-\frac{dM^{2}}{2 \gamma^{2}(a)}\right)  dt=:c_{1},
\end{split}
\]
where the second inequality follows because $A \subset[-M, M]^{d}$. This
proves the first inequality in (\ref{claim}).

We next prove the second inequality in (\ref{claim}). We have that
\[%
\begin{split}
&  \mathrm{E}(|\nu_{n}|^{2})=\int_{a}^{b}\int_{a}^{b}\int_{\mathbb{R}^{2d}%
}\int_{\mathbb{R}^{2d}}e^{-i(\langle\xi,x\rangle+\langle\eta,y\rangle)}\\
&  \qquad\qquad\qquad\qquad\times\exp\left(  -\frac{1}{2}(\xi,\eta)\Gamma
_{n}(s,t)(\xi,\eta)^{T}\right)  d\xi d\eta\mu(dx)\mu(dy)dsdt,
\end{split}
\]
where $\Gamma_{n}(s,t)$ denotes the $2d\times2d$ matrix
\[
\Gamma_{n}(s,t)=n^{-1}I_{2d}+\text{Cov}(B(s),B(t)),
\]
$I_{2d}$ denotes the $2d\times2d$ identity matrix, and $\text{Cov}
(B(s),B(t))$ denotes the $2d\times2d$ covariance matrix of $\left(
B(s),B(t)\right)  $.

Observe that $B(t)=(B_{1}(t),...,B_{d}(t))$ where the $B_{i}(t)$'s are the $d$
independent coordinate processes of $B(t)$. Hence,
\begin{equation}
\label{equa6}%
\begin{split}
&  \mathrm{E}(\vert\nu_{n} \vert^{2}) =\int_{a}^{b} \int_{a}^{b}
\int_{\mathbb{R}^{2d}} \int_{\mathbb{R}^{2d}} e^{-i \sum_{j=1}^{d} (\xi_{j}
x_{j} +\eta_{j} y_{j})}\\
&  \times\exp\left(  - \frac12 \sum_{j=1}^{d} ((\gamma^{2}(s)+\frac{1}%
{n})x_{j}^{2}+2\sigma(s,t) x_{j} y_{j}+(\gamma^{2}(t)+\frac{1}{n})y_{j}^{2})
\right)  d\xi d\eta\mu(dx) \mu(dy) ds dt\\
&  =\int_{a}^{b} \int_{a}^{b} \int_{\mathbb{R}^{2d}} \prod_{j=1}^{d}
\bigg(\int_{\mathbb{R}^{2}} e^{-i (\xi_{j} x_{j} +\eta_{j} y_{j})}\\
&  \times\exp\left(  - \frac12 ((\gamma^{2}(s)+\frac{1}{n})x_{j}^{2}%
+2\sigma(s,t) x_{j} y_{j}+(\gamma^{2}(t)+\frac{1}{n})y_{j}^{2}) \right)  d\xi
d\eta\bigg) \mu(dx) \mu(dy) ds dt,
\end{split}
\end{equation}
where recall that $\sigma(s,t)$ denotes the covariance of $B_{j}(s)$ and
$B_{j}(t)$ which does not depend on $j$.

Next observe that integral inside the product in (\ref{equa6}) is equal to
\[%
\begin{split}
\int_{\mathbb{R}^{2}} e^{-i (\xi_{j} x_{j}+\eta_{j} y_{j})} \exp\left(  -
\frac12 (\xi_{j}, \eta_{j}) \Phi_{n}(s,t) (\xi_{j}, \eta_{j})^{T} \right)
d\xi d\eta
\end{split}
\]
where $\Phi_{n}(s,t)$ denotes the $2 \times2$ matrix
\[
\Phi_{n}(s,t)=n^{-1} I_{2}+\text{Cov}(B_{j}(s), B_{j}(t)),
\]
which is independent of $j$.

Since $\Phi_{n}(s,t)$ is positive definite, we have
\begin{equation}%
\begin{split}
&  \int_{\mathbb{R}^{2}}e^{-i(\xi_{j} x_{j}+\eta_{j} y_{j} )}\exp\left(
-\frac{1}{2}(\xi_{j},\eta_{j})\Phi_{n}(s,t)(\xi_{j} ,\eta_{j})^{T}\right)
d\xi d\eta\\
&  \qquad=\frac{2\pi}{\sqrt{\text{det}(\Phi_{n}(s,t))}}\exp\left(  -\frac
{1}{2}(x_{j},y_{j}\Phi_{n}^{-1}(s,t)(x_{j},y_{j})^{T}\right)  .
\end{split}
\label{equa5}%
\end{equation}
\vspace*{0.1in}

\emph{Step 2: using the method of \cite{Bierme:09} near the diagonal}. We now
follow the proof of \cite[Lemma 11]{Bierme:09}, in order to show that for all
$s,t\in\lbrack a,b]$ with $|t-s|\leq\varepsilon$
\begin{equation}
(x_{j},y_{j})\Phi_{n}^{-1}(s,t)(x_{j},y_{j})^{T}\geq c_{3}\frac{(x_{j}%
-y_{j})^{2}}{\text{det}(\Phi_{n}(s,t))}, \label{equa}%
\end{equation}
for some constant $c_{3}>0$ depending only on $(a,b)$, and $\varepsilon>0$ as
in Lemma \ref{lem1}.

First remark that
\[
(x_{j},y_{j}) \Phi^{-1}_{n}(s,t) (x_{j},y_{j})^{T} \geq\frac{1}{\text{det}%
(\Phi_{n}(s,t))} \mathrm{E} \left(  (x_{j} B_{j}(t)-y_{j} B_{j}(s))^{2}
\right)  .
\]
Thus, in order to show (\ref{equa}), it suffices to prove that
\begin{equation}
\label{equa2}\mathrm{E} \left(  (x_{j} B_{j}(t)-y_{j} B_{j}(s))^{2} \right)
\geq c_{3} (x_{j}-y_{j})^{2}.
\end{equation}
Next observe that in order to prove (\ref{equa2}) it suffices to show that
\begin{equation}
\label{equa3}\mathrm{E}((B_{j}(t))^{2}) \geq c_{4},
\end{equation}
and that
\begin{equation}
\label{equa4}\frac{\text{det}(\text{Cov}(B_{j}(s), B_{j}(t)))}{\mathrm{E}%
((B_{j}(s)-B_{j}(t))^{2})} \geq c_{5},
\end{equation}
for some constants $c_{4}, c_{5}>0$ only depending on $(a,b)$.

Observe that (\ref{equa3}) holds by (\ref{e1v}) for all $t\in\lbrack a,b]$ as
$\gamma^{2}$ is increasing. On the other hand, by (\ref{e1}) the denominator
in (\ref{equa4}) is bounded by $\ell\gamma^{2}(|t-s|)$. Moreover, by
Hypothesis \ref{h0} and Lemmas \ref{lem0} and \ref{lem1}, the numerator in
(\ref{equa4}) satisfies that for all $s,t\in\lbrack a,b]$ with $|t-s|\leq
\varepsilon$
\[
\text{det}(\text{Cov}(B_{j}(s),B_{j}(t)))=\gamma^{2}(s)\text{Var}%
(B_{j}(t)|B_{j}(s))\geq c_{6}\gamma^{2}(|t-s|),
\]
for some $c_{6}>0$ depending only on $(a,b)$. Therefore, (\ref{equa4}) holds
for all $s,t\in\lbrack a,b]$ with $|t-s|\leq\varepsilon$. Thus, (\ref{equa})
holds true.

Now, replacing (\ref{equa}) into (\ref{equa5}), returning to the computation
in (\ref{equa6}), and appealing to Fubini's theorem, we get that
\[
\mathrm{E}(|\nu_{n}|^{2})\leq I_{1}+I_{2},
\]
where
\[%
\begin{split}
I_{1}  &  :=\int_{\mathbb{R}^{2d}}\iint_{D(\varepsilon)}\frac{(2\pi)^{d}%
}{(\sqrt{\text{det}(\Phi_{n}(s,t))})^{d}}\exp\left(  -\frac{c_{3}}{2}%
\frac{|x-y|^{2}}{\text{det}(\Phi_{n}(s,t))}\right)  dsdt\mu(dx)\mu(dy),\\
I_{2}  &  :=\int_{\mathbb{R}^{2d}}\iint_{[a,b]^{2}\setminus D(\varepsilon
)}\frac{(2\pi)^{d}}{(\sqrt{\text{det}(\Phi_{n}(s,t))})^{d}}dsdt\mu(dx)\mu(dy),
\end{split}
\]
and $D(\varepsilon)=\{(s,t)\in\lbrack a,b]^{2}:|t-s|\leq\varepsilon\}$%
.\vspace*{0.1in}

\emph{Step 3: Bounding the off-diagonal contribution to }$\mathrm{E}(|\nu
_{n}|^{2})$.

We start by bounding $I_{2}$. Observe that%
\begin{align*}
\det(\Phi_{n}(s,t))  &  =\gamma^{2}(s)\gamma^{2}(t)-\sigma^{2}(s,t)+\frac
{\gamma^{2}(s)}{n}+\frac{\gamma^{2}(t)}{n}+\frac{1}{n^{2}}\\
&  \geq\gamma^{2}(s)\gamma^{2}(t)-\sigma^{2}(s,t).
\end{align*}
By the Cauchy-Schwartz inequality, the function $\left(  s,t\right)
\mapsto\gamma^{2}(s)\gamma^{2}(t)-\sigma^{2}(s,t)$ is non-negative, and since
$\gamma\left(  r\right)  =0\iff r=0$, this function is strictly positive and
continuous away from the diagonal $\left\{  s=t\right\}  $. Therefore, for all
$s,t\in\lbrack a,b]$ with $|t-s|>\varepsilon$,
\[
\text{det}(\Phi_{n}(s,t))\geq c_{7},
\]
for some constant $0<c_{7}(a,b)$. Hence, we get that
\[
\iint_{\lbrack a,b]^{2}\setminus D(\varepsilon)}\frac{(2\pi)^{d}}%
{(\sqrt{\text{det}(\Phi_{n}(s,t))})^{d}}dsdt\leq c_{8},
\]
where the constant $c_{8}$ only depends on $(a,b)$. Therefore from the
definition of $I_{2}$ in step 2, and since $K$ is non-increasing, we get%
\begin{align*}
I_{2}  &  \leq c_{8}\int_{\mathbb{R}^{2d}}\mu(dx)\mu(dy)=c_{8}\int
_{\mathbb{R}^{2d}}\mu(dx)\mu(dy)\frac{K\left(  \left\vert x-y\right\vert
\right)  }{K\left(  \left\vert x-y\right\vert \right)  }\\
&  \leq\frac{c_{8}}{K(2M)}\mathcal{E}_{\mathrm{K}}(\mu)\leq c_{8}%
\mathcal{E}_{\mathrm{K}}(\mu).
\end{align*}
This prove the part of claim (\ref{claim}) corresponding to $I_{2}$, i.e.
$I_{2}\leq const\cdot\mathcal{E}_{\mathrm{K}}\left(  \mu\right)  $%
.\vspace*{0.1in}

\emph{Step 4: Bounding the diagonal contribution to }$\mathrm{E}(|\nu_{n}%
|^{2})$.

We next treat $I_{1}$. Observe that if $\text{det}(\Phi_{n}(s,t))<|x-y|^{2}$,
using the inequality $x^{d/2}e^{-cx}\leq c^{\prime}$ valid for all $x>0$, it
yields that
\[
\frac{(2\pi)^{d}}{(\sqrt{\text{det}(\Phi_{n}(s,t))})^{d}}\exp\left(
-\frac{c_{3}}{2}\frac{|x-y|^{2}}{\text{det}(\Phi_{n}(s,t))}\right)  \leq
\frac{c_{9}}{|x-y|^{d}}.
\]

Therefore, by Lemma \ref{lem1} we conclude that
\[%
\begin{split}
&  \iint_{D(\varepsilon)}\frac{(2\pi)^{d}}{(\sqrt{\text{det}(\Phi_{n}%
(s,t))})^{d}}\exp\left(  -\frac{c_{3}}{2}\frac{|x-y|^{2}}{\text{det}(\Phi
_{n}(s,t))}\right)  dsdt\\
&  \qquad\qquad\qquad\leq c_{10}\int_{a}^{b}\int_{a}^{b}\frac{1}{\max
(\gamma^{d}(|t-s|),|x-y|^{d})}dsdt.
\end{split}
\]

We next break the last integral into the regions $\{(s,t)\in\lbrack
a,b]^{2}:\gamma(|t-s|)\leq|x-y|)\}$ and $\{(s,t)\in\lbrack a,b]^{2}%
:\gamma(|t-s|)>|x-y|)\}$ and denote them by $J_{1}$ and $J_{2}$, respectively.
We first have that
\begin{equation}
J_{1} =\int_{a}^{b}\int_{\{t\in\lbrack a,b]:\gamma(|t-s|)\leq|x-y|)\}}\frac
{1}{|x-y|^{d}}dtds \leq c_{11}|x-y|^{-d}\gamma^{-1}(|x-y|), \label{J1ub}%
\end{equation}
where in the last inequality we have used the fact that $\gamma^{-1}$ is
strictly increasing.

On the other hand, using the change of variable $u=t-s$, we have that
\begin{equation}%
\begin{split}
J_{2}  &  =\int_{a}^{b}\int_{\{t\in\lbrack a,b]:\gamma(|t-s|)>|x-y|)\}}%
\frac{1}{\gamma^{d}(|t-s|)}dtds\\
&  \leq c_{12}\int_{\{u\in\lbrack0,b-a]:\gamma(u)>|x-y|)\}}\frac{1}{\gamma
^{d}(u)}du\\
&  =c_{12}\cdot v\left(  \gamma^{-1}\left(  \left\vert x-y\right\vert \right)
\right)  .
\end{split}
\end{equation}
where we again used the fact that $\gamma^{-1}$ is strictly increasing.
Integrating the last expression above against $\mu\left(  dx\right)
\mu\left(  dy\right)  $ over $D\left(  \varepsilon\right)  $ yields the upper
bound $c_{12}\cdot\mathcal{E}_{\mathrm{K}}\left(  \mu\right)  $, which is what
is required for the corresponding portion of the proof that $I_{1}\leq
const\cdot\mathcal{E}_{\mathrm{K}}\left(  \mu\right)  $.

To finish the proof that $I_{1}\leq const\cdot\mathcal{E}_{\mathrm{K}}\left(
\mu\right)  $, it is sufficient to show that
\[
J_{1}/v\left(  \gamma^{-1}\left(  \left\vert x-y\right\vert \right)  \right)
\]
is bounded. To prove this, given the upper bound in (\ref{J1ub}), and
reverting to $r=\gamma^{-1}\left(  \vert x-y \vert\right)  $ as our variable,
it is sufficient to prove that the function defined on $(0,b-a]$ by
\[
f\left(  r\right)  :=\frac{r/\gamma^{d}\left(  r\right)  }{v\left(  r\right)
}.
\]
is bounded. Since the functions in this ratio are all continuous, it is
sufficient to prove that $\lim_{0+}f<+\infty$. For this, let us consider two
different cases depending on whether $v$ is bounded or not. First, when $v$ is
bounded, as $1/\gamma^{d}$ is integrable at $0$ and $1/r$ is not, we have that
$\lim_{0+} \frac{1/\gamma^{d}(r)}{1/r}< +\infty$, and hence $\lim_{0+}f<
+\infty$. We now assume that $v$ is unbounded. We may then invoke the
following elementary one-sided extension of l'H\^{o}pital's rule for functions
$g,h$ whose derivatives have left and right limits everywhere: if $\lim
_{0+}h=+\infty$ and $\lim\sup_{0+}\max\left(  g^{\prime}\left(  r-\right)
;g^{\prime}\left(  r+\right)  \right)  /\min\left(  h^{\prime}\left(
r+\right)  ;h^{\prime}\left(  r-\right)  \right)  <+\infty$, then $\lim
\sup_{0+}g\left(  r\right)  /h\left(  r\right)  <+\infty$. With the
corresponding $g$ and $h$ from the definition of $f$ above, we find%
\begin{align*}
&  \frac{\max\left(  g^{\prime}\left(  r-\right)  ;g^{\prime}\left(
r+\right)  \right)  }{\min\left(  h^{\prime}\left(  r+\right)  ;h^{\prime
}\left(  r-\right)  \right)  }\\
&  =\frac{g^{\prime}\left(  r-\right)  }{h^{\prime}\left(  r\right)  }%
=\frac{-d\gamma^{-d-1}\left(  r\right)  \gamma^{\prime}\left(  r-\right)
r+1/\gamma^{d}\left(  r\right)  }{-1/\gamma^{d}\left(  r\right)  }\\
&  =-1+d\frac{r\gamma^{\prime}\left(  r-\right)  }{\gamma\left(  r\right)  }.
\end{align*}
By our concavity assumption in Hypothesis \ref{h0}, since $\gamma\left(
0\right)  =0$, we get that for every $r$ close enough to $0$, $\gamma\left(
r\right)  /r\geq\gamma^{\prime}\left(  r-\right)  $. This proves that the last
expression above is bounded above by $-1+d$. This finishes the proof that
$\lim_{0+}f<+\infty$, and therefore that $I_{1}\leq const\cdot\mathcal{E}%
_{\mathrm{K}}\left(  \mu\right)  $.\vspace*{0.1in}

\emph{Step 5: Conclusion.}

By Step 2, and the final estimates from Steps 3 and 4, the claim (\ref{claim})
is justified. Using (\ref{claim}) and the Paley-Zygmund inequality (cf.
\cite[p.8]{Kahane:85}), one can check that there exists a subsequence of the
sequence $(\nu_{n})_{n\geq1}$ that converges weakly to a finite measure $\nu$
which is positive with positive probability, satisfies (\ref{claim}), and is
supported on $[a,b]\cap(B)^{-1}(A)$. Therefore, using again the Paley-Zygmund
inequality, we conclude that
\[
\P (B([a,b])\cap A\neq\varnothing)\geq\P (|\nu|>0)\geq\frac{\mathrm{E}%
(|\nu|)^{2}}{\mathrm{E}(|\nu|^{2})}\geq\frac{c_{1}^{2}}{c_{2}\mathcal{E}%
_{\mathrm{K}}(\mu)}.
\]
Finally, (\ref{cap2}) finishes the proof of the Theorem.
\end{proof}

\section{The probability for a scalar Gaussian process to hit
points\label{hitd1sec}}

Recall that $B=(B(t),t\in\mathbb{R}_{+})$ is a centered continuous Gaussian
process in $\mathbb{R}$ with variance $\gamma^{2}(t)$ for some continuous
strictly increasing function $\gamma:\mathbb{R}_{+}\rightarrow\mathbb{R}_{+}$
with $\lim_{0}\gamma=0$, such that for some constant $\ell\geq1$ and for all
$s,t\geq0$,
\[
(1/\ell)\gamma^{2}(|t-s|)\leq\mathrm{E}[|B(t)-B(s)|^{2}]\leq\ell\gamma
^{2}(|t-s|).
\]

In this section we find a sharp estimate of the probability for $B$ to hit
points (note that $d=1$). Since $B$ is almost surely continuous and Gaussian,
we know that there is a positive probability to hit any point $z\in\mathbb{R}$
in any time interval $[a,b]$ with $0<a<b$. We have the following quantitative estimate.

\begin{prop}
\label{hitd1} There exists a universal positive constant $c_{u}$ and a
positive constant $t_{0}$ depending only on the law of $B$, such that for all
$z\in\mathbb{R}$, for all $a,b$ such that $0<a<b$ and $b-a\leq t_{0}$,%
\[
\P (B\left(  [a,b]\right)  \ni z)\leq\frac{c_{u}\sqrt{\ell}}{\gamma\left(
a\right)  }f_{\gamma}\left(  b-a\right)  .
\]
where $\ell$ is the constant in \textnormal{(\ref{e1})}, and
\[
f_{\gamma}\left(  x\right)  :=\gamma\left(  x\right)  \sqrt{\log2}+\int
_{0}^{1/2}\gamma\left(  xy\right)  \frac{dy}{y\sqrt{\log\left(  1/y\right)  }%
}.
\]

\end{prop}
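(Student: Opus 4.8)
The plan is to bound the hitting probability by a supremum of a Gaussian-type quantity and then apply a metric-entropy (Dudley-type) estimate, exploiting condition (\ref{e1}) and the concavity of $\gamma$. First I would reduce the event $\{z\in B([a,b])\}$ to a statement about the oscillation of $B$ around the level $z$. The natural observation is that if $B$ hits $z$ somewhere in $[a,b]$, then in particular $B(a)$ cannot be too far from $z$ relative to the fluctuations of $B$ over $[a,b]$; more precisely, writing $M:=\sup_{t\in[a,b]}|B(t)-B(a)|$, the event $\{z\in B([a,b])\}$ forces $|B(a)-z|\le M$. Since $B(a)$ is a centered Gaussian with variance $\gamma^2(a)$, and is not independent of $M$ but can be decoupled via conditioning, this suggests the bound
\[
\P(z\in B([a,b]))\le \P(|B(a)-z|\le M)\le \E\Big[\P\big(|B(a)-z|\le m\big)\big|_{m=M}\Big]\le \frac{c}{\gamma(a)}\,\E[M],
\]
where the last step uses that a centered Gaussian density with variance $\gamma^2(a)$ is bounded by $(2\pi\gamma^2(a))^{-1/2}$, so $\P(|B(a)-z|\le m)\le 2m/(\sqrt{2\pi}\,\gamma(a))$ uniformly in $z$. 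A small technical point: $M$ and $B(a)$ are genuinely dependent, so I would either condition on the whole path increment process $(B(t)-B(a))_{t\in[a,b]}$, which is independent of $B(a)$ only in special cases, or—more robustly—use that for any fixed realization of the centered increment process, $B(a)$ ranges over a Gaussian independent of the shape; the cleanest route is to note $|B(a)-z|\le M$ is implied and bound the probability by $\E_{B(a)}$ of an indicator, then bound uniformly; I would make this rigorous by a Fubini/conditioning argument on the Gaussian vector $(B(a), (B(t)-B(a))_t)$, using that conditionally on the increments, $B(a)$ is still Gaussian with variance at most $\gamma^2(a)\cdot(\text{something}\le 1)$—and in fact the conditional density at any point is still bounded by a constant over $\gamma(a)$ by standard Gaussian conditioning inequalities.

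The core estimate is then $\E[M]=\E\sup_{t\in[a,b]}|B(t)-B(a)|\le C f_\gamma(b-a)$ with the stated $f_\gamma$. This is exactly where Dudley's entropy bound enters: the canonical metric is $d(s,t)=(\E|B(s)-B(t)|^2)^{1/2}\le \sqrt{\ell}\,\gamma(|s-t|)$ by (\ref{e1}), so the diameter of $[a,b]$ in this metric is at most $\sqrt{\ell}\,\gamma(b-a)$, and a covering of $[a,b]$ (an interval of length $b-a$) at metric scale $\varepsilon$ requires at most $\,\big((b-a)/\gamma^{-1}(\varepsilon/\sqrt{\ell})\big)+1$ balls, since $d(s,t)\le\varepsilon$ whenever $|s-t|\le\gamma^{-1}(\varepsilon/\sqrt{\ell})$. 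Dudley's bound gives
\[
\E[M]\le C\int_0^{D} \sqrt{\log N(\varepsilon)}\,d\varepsilon, \qquad D=\sqrt{\ell}\,\gamma(b-a),
\]
and substituting $N(\varepsilon)\le 2(b-a)/\gamma^{-1}(\varepsilon/\sqrt{\ell})$ (valid for small scales, absorbing the $+1$ and adjusting the constant since $\gamma^{-1}(\varepsilon/\sqrt\ell)\le b-a$ on the range of integration when $b-a\le t_0$) yields an integral of the form $\int_0^{\sqrt\ell\,\gamma(b-a)}\sqrt{\log\big(2(b-a)/\gamma^{-1}(\varepsilon/\sqrt\ell)\big)}\,d\varepsilon$. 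I would then change variables: set $\varepsilon=\sqrt\ell\,\gamma((b-a)y)$, so $\gamma^{-1}(\varepsilon/\sqrt\ell)=(b-a)y$, $d\varepsilon=\sqrt\ell\,(b-a)\gamma'((b-a)y)\,dy$, and the log term becomes $\log(2/y)$. After one integration by parts (moving the derivative off $\gamma$) this produces precisely the two pieces in $f_\gamma$: the boundary term gives $\gamma(b-a)\sqrt{\log 2}$ up to constants, and the remaining integral, after recognizing $\tfrac{d}{dy}\sqrt{\log(1/y)}\sim 1/(y\sqrt{\log(1/y)})$, gives $\int_0^{1/2}\gamma((b-a)y)\,\frac{dy}{y\sqrt{\log(1/y)}}$. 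Concavity of $\gamma$ is used to control the $\gamma'$ terms (a concave $\gamma$ with $\gamma(0)=0$ satisfies $y\gamma'(y)\le\gamma(y)$) so that the integration by parts is legitimate and no extra unbounded factors appear; the cutoff at $y=1/2$ rather than $y=1$ comes from splitting the entropy integral at the scale where $N(\varepsilon)\ge 2$, i.e. where $\gamma^{-1}(\varepsilon/\sqrt\ell)\le (b-a)/2$, below which the integrand vanishes.

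The main obstacle I anticipate is twofold. First, the decoupling of $B(a)$ from the supremum $M$ is not automatic—$(B(a), M)$ are dependent—so I need a careful conditioning argument showing the conditional density of $B(a)$ given the increment process is still bounded by $c/\gamma(a)$; this follows from the general fact that conditioning a jointly Gaussian vector only decreases variances, hence only increases the density's sup-norm by a bounded factor, but it must be stated cleanly (alternatively one can use a small-ball/Gaussian correlation style argument, or simply note $\P(|B(a)-z|\le M)=\E[\P(|B(a)-z|\le M \mid \mathcal{G})]$ where $\mathcal{G}=\sigma(B(t)-B(a):t\in[a,b])$ and $B(a)\mid\mathcal G$ is Gaussian with variance $\le\gamma^2(a)$). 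Second, the change of variables and integration by parts in Dudley's integral must be done without assuming differentiability or monotonicity of $\gamma'$ beyond what concavity gives; I would handle this by working with the concave majorant's one-sided derivatives and the inequality $y\gamma'(y-)\le\gamma(y)$, which makes every step an inequality in the right direction, and by choosing $t_0$ small enough (depending on the law of $B$, i.e. on $\ell$ and where $\gamma$ is concave) that $b-a\le t_0$ guarantees $[a,b]$ lies in the concavity region and the entropy integral's upper limit is in the regime where the substitution is monotone. Collecting constants gives the universal $c_u$ and the factor $\sqrt\ell$.
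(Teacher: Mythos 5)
The reduction of $\{B([a,b])\ni z\}$ to $\{|B(a)-z|\le M\}$ with $M=\sup_{[a,b]}|B(\cdot)-B(a)|$, and the Dudley entropy computation yielding $f_\gamma(b-a)$ (change of variables, integration by parts, split at $1/2$), are sound and essentially reproduce Step 4 of the paper's proof (the paper justifies the integration by parts via $\gamma(x)=o(\log^{-1/2}(1/x))$, you via concavity; both work). The genuine gap is the decoupling step. The chain $\P(|B(a)-z|\le M)\le \E\bigl[\P(|B(a)-z|\le m)\big|_{m=M}\bigr]$ is not a valid inequality without independence of $B(a)$ and $M$, and your proposed repair points in the wrong direction: conditioning on $\mathcal G=\sigma(B(t)-B(a):t\in[a,b])$ indeed gives $\mathrm{Var}(B(a)\mid\mathcal G)\le\gamma^2(a)$, but a \emph{smaller} conditional variance makes the conditional density \emph{larger}, so "variance $\le\gamma^2(a)$" gives no bound of the form $c/\gamma(a)$ on the conditional density; what you actually need is a uniform \emph{lower} bound $\mathrm{Var}(B(a)\mid\mathcal G)\ge c\,\gamma^2(a)$, and nothing in (\ref{e1})--(\ref{e1v}) supplies one. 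Indeed these conditions do not rule out processes of low rank for which $B(a)$ is (nearly) measurable with respect to the increment field, in which case the conditional law of $B(a)$ given $\mathcal G$ is degenerate and has no bounded density at all, so the claim "conditioning only increases the density's sup-norm by a bounded factor" is simply false as a general Gaussian fact.

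The paper avoids this problem by regressing in the opposite direction: write $B(s)=\rho(s)B(a)+R(s)$ with $R(s):=B(s)-\rho(s)B(a)$, which is \emph{exactly} independent of $B(a)$ by construction, and whose canonical metric is dominated by that of $B$, so Dudley applies to $R$ with the same bound $f_\gamma(b-a)$. The price is that one must work with the one-sided events $A_{a,\pm z}=\{B(a)\le \pm z\le\max_{[a,b]}B\}$ and $A_{b,\pm z}$ (four events in all), because on such an event the term $(\rho(s)-1)B(a)$ can be absorbed using $\max_s|\rho(s)-1|\le 1/2$ (this is where $t_0$ enters), confining $B(a)$ to an interval $[z-2\max_{[a,b]}R,\,z]$ whose length is independent of $B(a)$; the bound $2\,\E[\max R]/(\sqrt{2\pi}\,\gamma(a))$ is then legitimate. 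Note also that if you try to combine your two-sided quantity $M$ with this regression via the triangle inequality, you get $|B(a)-z|\le \tfrac12|B(a)|+\sup|R|$, whose sublevel set in the $B(a)$ variable has length of order $|z|$ rather than of order $\sup|R|$, so the two-sided formulation genuinely loses the estimate; the one-sided decomposition (together with the symmetry $B\overset{d}{=}-B$ and using both endpoints $a$ and $b$) is not cosmetic. As written, then, your argument has a real hole at the only place where the dependence structure of the process matters, and closing it leads you back to the paper's regression argument.
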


In any specific situation, the function $f_{\gamma}$ can be computed more
explicitly. In most cases, both terms in $f_{\gamma}$ are commensurate. We
state such a situation as follows.

\begin{cor}
\label{hitd1cor}Assume there exists $k,y_{0}>0$ such that
\begin{equation}
\int_{0}^{1/2}\gamma\left(  xy\right)  \frac{dy}{y\sqrt{\log\left(
1/y\right)  }}\leq k\gamma\left(  x\right)  \label{gammamult}%
\end{equation}
for all $x\in\lbrack0,y_{0}]$. Then, for some constant $L$ depending only on
$\gamma$ and $y_{0}$, for all $z\in\mathbb{R}$ and for all $a,b$ such that
$0<a<b$ and $b-a\leq t_{0}$,
\[
\P (B\left(  [a,b]\right)  \ni z)\leq\frac{L\sqrt{\ell}}{\gamma\left(
a\right)  }\gamma\left(  b-a\right)  .
\]

\end{cor}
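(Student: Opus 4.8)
The plan is to derive this directly from Proposition \ref{hitd1}, since Corollary \ref{hitd1cor} is essentially the statement that, under the multiplicativity-type hypothesis \eqref{gammamult}, the function $f_\gamma$ is commensurate with $\gamma$ itself on a neighborhood of the origin. First I would recall the definition
\[
f_{\gamma}\left(  x\right)  =\gamma\left(  x\right)  \sqrt{\log2}+\int
_{0}^{1/2}\gamma\left(  xy\right)  \frac{dy}{y\sqrt{\log\left(  1/y\right)  }},
\]
and observe that the first term is already a constant multiple of $\gamma(x)$, so the only thing to control is the integral term. This is exactly what \eqref{gammamult} provides: for $x\in[0,y_0]$, the integral is at most $k\gamma(x)$. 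Hence for all $x\in[0,y_0]$,
\[
f_{\gamma}\left(  x\right)  \leq\left(  \sqrt{\log2}+k\right)  \gamma\left(  x\right)  .
\]

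Next I would combine this with Proposition \ref{hitd1}. That proposition gives a threshold $t_0$ (depending only on the law of $B$) so that whenever $0<a<b$ and $b-a\leq t_0$,
\[
\P \left(  B\left(  [a,b]\right)  \ni z\right)  \leq\frac{c_{u}\sqrt{\ell}}{\gamma\left(  a\right)  }f_{\gamma}\left(  b-a\right)  .
\]
To apply the bound on $f_\gamma$ I need $b-a\leq y_0$ as well, so I would replace $t_0$ by $\min(t_0,y_0)$ — this is harmless since the conclusion is only claimed for $b-a\leq t_0$ and shrinking $t_0$ only strengthens the hypothesis; alternatively one notes the statement already allows $t_0$ to depend on the law of $B$, and now also on $\gamma$ and $y_0$, which is what the corollary asserts. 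Then for $b-a\leq\min(t_0,y_0)$,
\[
\P \left(  B\left(  [a,b]\right)  \ni z\right)  \leq\frac{c_{u}\sqrt{\ell}}{\gamma\left(  a\right)  }\left(  \sqrt{\log2}+k\right)  \gamma\left(  b-a\right)  ,
\]
so the claim holds with $L:=c_{u}\left(  \sqrt{\log2}+k\right)$, which depends only on the universal constant $c_u$ and on $k$, hence ultimately only on $\gamma$ and $y_0$.

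There is essentially no obstacle here: the corollary is a cosmetic repackaging of Proposition \ref{hitd1} once one notices that \eqref{gammamult} is precisely the condition making the two summands in $f_\gamma$ comparable. The only minor points of care are (i) ensuring the threshold on $b-a$ is small enough to invoke both Proposition \ref{hitd1} and hypothesis \eqref{gammamult}, which is handled by taking a minimum, and (ii) bookkeeping that the resulting constant $L$ has the claimed dependence. If anything merits a sentence of justification it is that the hypothesis \eqref{gammamult} at a single scale $x$ already bounds the full integral over $y\in(0,1/2]$ — but this is immediate, since the integral in \eqref{gammamult} is literally the second term of $f_\gamma(x)$.
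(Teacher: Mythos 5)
Your proposal is correct and matches the paper's (implicit) argument: the corollary is stated as an immediate consequence of Proposition \ref{hitd1}, precisely because under \eqref{gammamult} the two summands of $f_{\gamma}$ are commensurate with $\gamma$, giving $f_{\gamma}(b-a)\leq(\sqrt{\log 2}+k)\gamma(b-a)$ and hence $L=c_{u}(\sqrt{\log 2}+k)$. Your handling of the threshold (taking $b-a\leq\min(t_{0},y_{0})$, or absorbing the compact range $[y_{0},t_{0}]$ into a larger constant) is a reasonable way to settle the only bookkeeping point the paper leaves unstated.
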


The corollary's assumption (\ref{gammamult}) is satisfied, and the corollary's
conclusion holds, for instance, for $\gamma\left(  r\right)  =\gamma_{H,\beta
}\left(  r\right)  :=r^{H}\log^{\beta}\left(  1/r\right)  $ for some $\beta
\in\mathbb{R}$ and $H\in(0,1)$, or $\beta\leq0$ and $H=1$. We will see what
this implies in Section 5. In examples such as these, particularly when
$\beta=0$, i.e. $\gamma\left(  r\right)  =r^{H}$, we can use the same method
of proof to show that up to a multiplicative constant not dependent on $b-a$,
$\gamma\left(  b-a\right)  $ is also a lower bound for $\P (B\left(
[a,b]\right)  \ni z)$, justifying our terminology of \textquotedblleft
sharp\textquotedblright\ mentioned above. Since this lower bound would not be
used in this article, we omit further discussion of how to establish it.

The class of examples where $\gamma=\gamma_{H,\beta}$ is a special case of
continuous functions $\gamma$ with a positive \textquotedblleft
index\textquotedblright: $\mathrm{ind}f:=\inf\left\{  \alpha>0:f\left(
x\right)  =o\left(  x^{\alpha}\right)  \right\}  $. Saying that a function $f$
has a positive finite index thus simply means that it is negligible w.r.t.
some power function but not all polynomials. The index of a function is a
concept which was already used for estimating hitting probabilities in
\cite{Bierme:09} in the context of $\gamma_{H,0}$ (the case $\beta=0$) (see
also \cite{Testard:86}). More generally, the index of $\gamma_{H,\beta}$ is
equal to $H$ no matter what $\beta$ is. However, being able to compute the
index of a function $\gamma$ is not enough to be able to estimate its hitting
probabilities precisely, as the last corollary above, or as the results in
Section 5 show, since these results depend on the values of $\beta$.
Nevertheless, we record and sketch a proof of the reassuring fact that any
function $\gamma$ we might use with positive finite index satisfies condition
\ref{gammamult}.

\begin{lemma}
\label{Lemindex}Define \textrm{ind}$\gamma:=\sup\left\{  \alpha>0:\gamma
\left(  x\right)  =o\left(  x^{\alpha}\right)  \right\}  $. Assume $\gamma$ is
continuous and increasing. Assume \textrm{ind}$\gamma\in(0,\infty)$. Then
$\gamma$ satisfies condition \textnormal{(\ref{gammamult})}.
\end{lemma}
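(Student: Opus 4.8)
The plan is to exploit the definition of the index to obtain a clean power-type bound on $\gamma$ near the origin, and then to substitute this bound into the integral in \eqref{gammamult} and check that the resulting integral converges with the required homogeneity. First I would fix any exponent $\alpha$ with $0<\alpha<\mathrm{ind}\,\gamma$; by definition of the supremum this means $\gamma(x)=o(x^{\alpha})$ as $x\to 0+$, so there exists $y_{0}>0$ and a constant $c>0$ such that $\gamma(xy)\leq c\,(xy)^{\alpha}$ for all $xy\in(0,y_{0}]$; since $y\leq 1/2$ in the integral, it is enough to take $x\in[0,y_{0}]$ to guarantee $xy\in[0,y_{0}]$ (using that $\gamma$ is increasing, so the bound at $xy$ follows from the bound at any larger argument, or simply arguing directly). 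One subtlety: we also want the comparison constant $k$ in \eqref{gammamult} to be expressed in terms of $\gamma(x)$ rather than $x^{\alpha}$; for this I would additionally use that $\mathrm{ind}\,\gamma<\infty$ to get a lower bound $\gamma(x)\geq c' x^{\alpha'}$ for some $\alpha'>\alpha$ near $0$, or more simply note that since we only need \emph{some} $k$ and the conclusion is a one-sided inequality, it suffices to bound the left-hand side by a constant times $x^{\alpha}$ and separately bound $x^{\alpha}$ by a constant times $\gamma(x)$ — the latter again by the finite-index property, picking an intermediate exponent.

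The core estimate is then
\[
\int_{0}^{1/2}\gamma(xy)\,\frac{dy}{y\sqrt{\log(1/y)}}\leq c\,x^{\alpha}\int_{0}^{1/2}\frac{y^{\alpha}}{y\sqrt{\log(1/y)}}\,dy = c\,x^{\alpha}\int_{0}^{1/2}\frac{y^{\alpha-1}}{\sqrt{\log(1/y)}}\,dy.
\]
The remaining integral is a pure constant depending only on $\alpha$: near $y=0$ the integrand behaves like $y^{\alpha-1}$, which is integrable since $\alpha>0$ (the logarithmic factor only helps), and near $y=1/2$ the factor $1/\sqrt{\log(1/y)}$ is bounded. Hence $\int_{0}^{1/2}\gamma(xy)\,y^{-1}(\log(1/y))^{-1/2}\,dy\leq C\,x^{\alpha}$ for $x\in[0,y_{0}]$, with $C$ depending only on $\gamma$ (through $c$, $\alpha$, $y_{0}$). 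To finish, I would replace $x^{\alpha}$ by $k\,\gamma(x)$: fixing some $\alpha_{1}$ with $\alpha<\alpha_{1}<\mathrm{ind}\,\gamma$, we have $\gamma(x)=o(x^{\alpha_{1}})$, which gives no lower bound; instead I should pick $\alpha$ to lie strictly between two admissible values, or — cleaner — use that $x^{\alpha}/\gamma(x)\to 0$ would be the wrong direction, so I instead want $\alpha\geq\mathrm{something}$. The correct move: choose $\alpha$ with $0<\alpha<\mathrm{ind}\,\gamma$ so that $\gamma(xy)\le c(xy)^\alpha$, but for the final comparison choose a \emph{second} exponent $\beta_{1}>\mathrm{ind}\,\gamma$; by definition of $\mathrm{ind}\,\gamma$ as a supremum, $\gamma(x)\neq o(x^{\beta_{1}})$ is automatic but we actually want the reverse, so instead observe that since $\gamma$ is increasing and $\gamma(0)=0$ is not assumed here — wait, it is not needed — we can simply invoke: $\mathrm{ind}\,\gamma<\infty$ means $x^{\beta_{1}}=o(\gamma(x))$ for every $\beta_{1}>\mathrm{ind}\,\gamma$, hence for such $\beta_{1}$ and $x$ small, $x^{\beta_{1}}\leq\gamma(x)$, i.e. $x^{\alpha}=x^{\alpha-\beta_{1}}x^{\beta_{1}}\leq y_{0}^{\alpha-\beta_{1}}\gamma(x)$ provided $\alpha\geq\beta_{1}$. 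So I must take $\mathrm{ind}\,\gamma<\beta_{1}\leq\alpha<\mathrm{ind}\,\gamma$, which is impossible.

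The resolution — and this is the step I expect to require the most care — is that the \emph{two} occurrences of $\gamma$ play different roles and should be bounded using different exponents on opposite sides of $\mathrm{ind}\,\gamma$: bound $\gamma(xy)$ from \emph{above} using an exponent $\alpha<\mathrm{ind}\,\gamma$ (valid since $\gamma(u)=o(u^{\alpha})$ fails; rather $u^{\alpha}=o(\gamma(u))$... ). I need to be scrupulous about which direction the index gives. By the definition $\mathrm{ind}\,\gamma=\sup\{\alpha>0:\gamma(x)=o(x^{\alpha})\}$: for $\alpha<\mathrm{ind}\,\gamma$, $\gamma(x)=o(x^{\alpha})$, so $\gamma$ is \emph{dominated} by small powers — this is the upper bound I want for $\gamma(xy)$. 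For $\alpha>\mathrm{ind}\,\gamma$, $\gamma(x)\neq o(x^{\alpha})$, but since $\gamma$ is increasing this forces $x^{\alpha}=o(\gamma(x))$ (a standard fact for monotone functions), giving the lower bound $\gamma(x)\geq x^{\alpha}$ near $0$ — this handles the denominator. So the clean argument is: pick $\alpha_{1}<\mathrm{ind}\,\gamma<\alpha_{2}$, both finite and positive; get $\gamma(xy)\leq c_{1}(xy)^{\alpha_{1}}$ and $\gamma(x)\geq c_{2}x^{\alpha_{2}}$ for $x,xy\in(0,y_{0}]$; then the integral is $\leq c_{1}x^{\alpha_{1}}\int_{0}^{1/2}y^{\alpha_{1}-1}(\log(1/y))^{-1/2}dy = C x^{\alpha_{1}}$, and since $\alpha_{1}<\alpha_{2}$, $x^{\alpha_{1}}\leq y_{0}^{\alpha_{1}-\alpha_{2}}x^{\alpha_{2}}\leq (y_{0}^{\alpha_{1}-\alpha_{2}}/c_{2})\gamma(x)$ for $x\in(0,y_{0}]$. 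This yields \eqref{gammamult} with $k:=C y_{0}^{\alpha_{1}-\alpha_{2}}/c_{2}$. The one genuine lemma to invoke or prove in passing is that for a monotone increasing $f$, $f(x)\neq o(x^{\alpha})$ near $0$ implies $x^{\alpha}=O(f(x))$, equivalently $\liminf_{x\to0+}f(x)/x^{\alpha}>0$ — this is where monotonicity of $\gamma$ is used, and it is the only non-routine point; everything else is an elementary convergent integral and bookkeeping of exponents.
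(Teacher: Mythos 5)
There is a genuine gap, and it sits exactly at the step you yourself flagged as the delicate one: converting the bound $C x^{\alpha_{1}}$ into $k\,\gamma(x)$. Since $\alpha_{1}<\mathrm{ind}\,\gamma$, the definition of the index gives $\gamma(x)=o(x^{\alpha_{1}})$ as $x\to0^{+}$, i.e.\ $x^{\alpha_{1}}/\gamma(x)\to+\infty$, so no constant $k$ can satisfy $Cx^{\alpha_{1}}\le k\gamma(x)$ on any interval $(0,y_{0}]$. Your closing inequality $x^{\alpha_{1}}\le y_{0}^{\alpha_{1}-\alpha_{2}}x^{\alpha_{2}}$ is reversed: because $\alpha_{1}-\alpha_{2}<0$ and $x\le y_{0}$, one has $x^{\alpha_{1}-\alpha_{2}}\ge y_{0}^{\alpha_{1}-\alpha_{2}}$, hence $x^{\alpha_{1}}\ge y_{0}^{\alpha_{1}-\alpha_{2}}x^{\alpha_{2}}$, and $x^{\alpha_{1}}/x^{\alpha_{2}}\to\infty$. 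So the whole strategy of discarding $\gamma$ in favour of a single power $c(xy)^{\alpha_{1}}$ and only afterwards comparing with $\gamma(x)$ cannot close: any exponent that upper-bounds $\gamma$ near $0$ is automatically too small to be dominated back by $\gamma(x)$. A second, independent problem is the ``standard fact for monotone functions'' you invoke: for increasing $f$, $f(x)\neq o(x^{\alpha_{2}})$ does \emph{not} imply $\liminf_{x\to0^{+}}f(x)/x^{\alpha_{2}}>0$; an increasing function can touch the envelope $x^{\alpha_{2}/2}$ along one sparse sequence and the envelope $x^{2\alpha_{2}}$ along another, so failure of little-$o$ only yields a sequence $x_{n}\downarrow0$ with $\gamma(x_{n})\ge C x_{n}^{\alpha_{2}}$. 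The paper is explicit on this point: it extracts the lower bound only along a sequence and then \emph{assumes} it holds for all small $x$ ``to shorten this sketch of proof''.

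The paper's route is structurally different in the respect that matters: it never leaves the ratio $\gamma(xy)/\gamma(x)$, and it uses two power bounds hugging the \emph{same} value $\alpha=\mathrm{ind}\,\gamma$, namely $\gamma(u)\le c\,u^{\alpha-\varepsilon}$ and (in the simplified regime) $\gamma(u)\ge C\,u^{\alpha+\varepsilon}$, so the mismatch is only a factor $x^{-2\varepsilon}$ with $\varepsilon$ at your disposal; it then splits the $y$-integral at $y=x$, where on $[0,x]$ the factor $\int_{0}^{x}y^{\alpha-\varepsilon-1}dy\sim x^{\alpha-\varepsilon}$ absorbs the loss $x^{-2\varepsilon}$ (taking $\varepsilon<\alpha/3$), and the range $[x,1/2]$ is handled by a separate estimate. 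The essential ideas your proposal is missing are (i) the two exponents must squeeze $\mathrm{ind}\,\gamma$ with an arbitrarily small gap rather than sit on opposite sides of it at fixed distance, (ii) the smallness of $y$ (via the split at $y=x$) is what pays for that gap, and (iii) the pointwise lower bound on $\gamma$ is not free — it is either a sequence statement or an extra simplifying assumption, as the paper acknowledges. As written, your chain of inequalities cannot be repaired by adjusting constants; it needs the comparison to $\gamma(x)$ restored before integrating and the split of the $y$-range.
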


\begin{proof}
Let $\alpha=\mathrm{ind}\gamma$. The lemma's assumption implies that for any
$\varepsilon>0$, there exists a constant $c$ such that if $x\in\lbrack0,1/2]$,
$\gamma\left(  x\right)  \leq cx^{\alpha-\varepsilon}$. It also implies that
there exists $C>0$ and a sequence $\left(  x_{n}\right)  _{n}$ decreasing to
$0$ such that $\gamma\left(  x_{n}\right)  \geq C\left(  x_{n}\right)
^{\alpha+\varepsilon}$. To shorten this sketch of proof, we will assume that
$\gamma\left(  x\right)  \geq Cx^{\alpha+\varepsilon}$ holds for all
$x\in\lbrack0,1/2]$; the details in the general case are left to the reader.
We now only need to show that%
\[
I:=\int_{0}^{1/2}\frac{\gamma\left(  xy\right)  }{\gamma\left(  x\right)
}\frac{dy}{y\sqrt{\log\left(  1/y\right)  }}%
\]
is bounded as $x$ approaches the origin. For $x<1/2$, with constants $k$ which
may change from line to line, using the bounds on $\gamma$ and the fact that
$\gamma$ is increasing, we have%
\begin{align*}
0  &  \leq I=\int_{0}^{x}\frac{\gamma\left(  xy\right)  }{\gamma\left(
x\right)  }\frac{dy}{y\sqrt{\log\left(  1/y\right)  }}+\int_{x}^{1/2}%
\frac{\gamma\left(  xy\right)  }{\gamma\left(  x\right)  }\frac{dy}%
{y\sqrt{\log\left(  1/y\right)  }}\\
&  \leq kx^{-2\varepsilon}\int_{0}^{x}y^{\alpha-\varepsilon-1}dy+k\int
_{x}^{1/2}\frac{dy}{y\sqrt{\log\left(  1/y\right)  }}\frac{\gamma\left(
x^{2}\right)  }{\gamma\left(  x\right)  }\\
&  \leq kx^{\alpha-3\varepsilon}+kx^{\alpha-3\varepsilon}\sqrt{\log\left(
1/x\right)  }.
\end{align*}
By choosing a value $\varepsilon\in(0,\alpha/3)$, the result follows.
\end{proof}

For processes which are continuous but not H\"{o}lder-continuous
(\ref{gammamult}) may fail, as may the above corollary's conclusion. For
instance, this occurs with $\gamma=\gamma_{0,\beta}$. To guarantee that $B$ is
continuous in that particular case, we need only assume $\beta<-1/2$. In this
case, we get a $\sqrt{\log}$ correction: for $\gamma\left(  r\right)
=\log^{\beta}\left(  1/r\right)  $,%
\[
\P (B\left(  [a,b]\right)  \ni z)\leq\frac{2c_{u}\sqrt{\ell}}{\gamma\left(
a\right)  }\gamma\left(  b-a\right)  \sqrt{\log(\frac{1}{b-a})},
\]
as shown via a direct estimation of $f_{\gamma}$ which we omit here.

\begin{proof}
[Proof of Proposition \ref{hitd1}]~\vspace*{0.1in}

\emph{Step 1: setup.} The event we must estimate is $A=\left\{  B\left(
[a,b]\right)  \ni z\right\}  $. We clearly have%
\[
A=\left\{  \min_{[a,b]}B\leq z\leq\max_{\lbrack a,b]}B\right\}  .
\]
Now consider
\begin{align*}
A_{a,z}  &  :=\left\{  B\left(  a\right)  \leq z\leq\max_{\lbrack
a,b]}B\right\}  ,\\
A_{b,z}  &  :=\left\{  B\left(  b\right)  \leq z\leq\max_{\lbrack
a,b]}B\right\}
\end{align*}
Since $B\left(  a\right)  \geq\min_{[a,b]}B$, we have $A_{a,z}\subset A$, and
similarly $A_{b,z}\subset A$. Let us prove that%
\begin{equation}
\P \left(  A\right)  \leq\P \left(  A_{a,z}\right)  +\P \left(  A_{b,z}%
\right)  +\P \left(  A_{a,-z}\right)  +\P \left(  A_{b,-z}\right)  .
\label{4P}%
\end{equation}
To lighten the notation, set $m=\min_{[a,b]}B$, $M=\max_{[a,b]}B$. Now
consider the case $B\left(  a\right)  <B\left(  b\right)  $, we get $m\leq
B\left(  a\right)  \leq B\left(  b\right)  \leq M$. Thus the interval $[m,M]$
is the union of the three intervals $I=[m,B\left(  a\right)  ]$, $J=[B\left(
a\right)  ,B\left(  b\right)  ]$, and $K=[B\left(  b\right)  ,M]$. In the
event $A\cap\left\{  B\left(  a\right)  <B\left(  b\right)  \right\}  $, if
$z\in I$, then $z\in\lbrack m,B\left(  b\right)  ]$; if $z\in J$, then
$z\in\lbrack B\left(  a\right)  ,M]$, and finally if $z\in K$, then
$z\in\lbrack B\left(  a\right)  ,M]$ also. This proves that%
\[
\left[  A\cap\left\{  B_{a}<B_{b}\right\}  \right]  \subset\left[  \left\{
B\left(  a\right)  \leq z\leq M\right\}  \cup\left\{  m\leq z\leq B\left(
b\right)  \right\}  \right]  .
\]
By reversing the roles of $B\left(  a\right)  $ and $B\left(  b\right)  $ we
get%
\[
\left[  A\cap\left\{  B_{b}<B_{a}\right\}  \right]  \subset\left[  \left\{
B\left(  b\right)  \leq z\leq M\right\}  \cup\left\{  m\leq z\leq B\left(
a\right)  \right\}  \right]  .
\]
We immediately get
\begin{align*}
\P \left(  A\right)   &  \leq\P \left(  A\cap\left\{  B_{a}<B_{b}\right\}
\right)  +\P \left(  A\cap\left\{  B_{b}<B_{a}\right\}  \right) \\
&  \leq\P \left[  \left\{  B\left(  a\right)  \leq z\leq M\right\}
\cup\left\{  m\leq z\leq B\left(  b\right)  \right\}  \right] \\
&  +\P \left[  \left\{  B\left(  b\right)  \leq z\leq M\right\}  \cup\left\{
m\leq z\leq B\left(  a\right)  \right\}  \right] \\
&  \leq\P \left[  B\left(  a\right)  \leq z\leq M\right]  +\P \left[  m\leq
z\leq B\left(  b\right)  \right] \\
&  +\P \left[  B\left(  b\right)  \leq z\leq M\right]  +\P \left[  m\leq z\leq
B\left(  a\right)  \right]  .
\end{align*}
The first and third terms in the last sum of four terms above are precisely
$\P \left(  A_{a,z}\right)  $ and $\P \left(  A_{b,z}\right)  $. For the
second term, we can write
\[
\left\{  m\leq z\leq B\left(  b\right)  \right\}  =\left\{  \max
_{[a,b]}\left(  -B\right)  \geq-z\geq-B\left(  b\right)  \right\}  ,
\]
and since $B$ and $-B$ have the same law, the second term is equal to
$\P \left(  A_{b,-z}\right)  $. Similarly the fourth term is equal to
$\P \left(  A_{a,-z}\right)  $, proving our claim (\ref{4P}).\vspace*{0.1in}

\emph{Step 2: Gaussian linear regression}. Here we appeal to the linear
regression between $B\left(  a\right)  $ and any value $B\left(  s\right)  $
for $s\in(a,b]$. Recall the notation
\[
\sigma^{2}\left(  s,t\right)  :=\mathrm{E}\left[  B\left(  s\right)  B\left(
t\right)  \right]  ,
\]
$\delta^{2}\left(  s,t\right)  :=\mathrm{E}\left[  \left(  B\left(  s\right)
-B\left(  t\right)  \right)  ^{2}\right]  $, and that $\gamma^{2}\left(
t\right)  =\mathrm{Var}B\left(  t\right)  $. For any $s\in\lbrack a,b]$, let
\[
\rho\left(  s\right)  :=\frac{\sigma^{2}\left(  a,s\right)  }{\gamma
^{2}\left(  a\right)  }.
\]
It is elementary that there exists a centered Gaussian random variable
$R\left(  s\right)  $ independent of $B\left(  a\right)  $ such that%
\[
B\left(  s\right)  =\rho\left(  s\right)  B\left(  a\right)  +R\left(
s\right)  .
\]
Note that this defines $R\left(  s\right)  :=B\left(  s\right)  -\rho\left(
s\right)  B\left(  a\right)  $, so that $R$ is almost-surely continuous on
$[a,b]$. It is also elementary to compute the covariance structure of
$R\left(  s\right)  $:%
\begin{align*}
\mathrm{E}\left[  \left(  R\left(  s\right)  -R\left(  t\right)  \right)
^{2}\right]   &  =\mathrm{E}\left[  \left(  B\left(  s\right)  -B\left(
t\right)  \right)  ^{2}\right]  +\left(  \rho\left(  s\right)  -\rho\left(
t\right)  \right)  ^{2}\mathrm{E}\left[  B\left(  a\right)  ^{2}\right] \\
&  -2\left(  \rho\left(  s\right)  -\rho\left(  t\right)  \right)
\mathrm{E}\left[  B\left(  a\right)  \left(  B\left(  s\right)  -B\left(
t\right)  \right)  \right] \\
&  =\delta^{2}\left(  s,t\right)  +\frac{1}{\gamma^{2}\left(  a\right)
}\left(  \sigma^{2}\left(  a,s\right)  -\sigma^{2}\left(  a,t\right)  \right)
^{2}\\
&  -2\frac{1}{\gamma^{2}\left(  a\right)  }\left(  \sigma^{2}\left(
a,s\right)  -\sigma^{2}\left(  a,t\right)  \right)  \left(  \sigma^{2}\left(
a,s\right)  -\sigma^{2}\left(  a,t\right)  \right) \\
&  =\delta^{2}\left(  s,t\right)  -\frac{1}{\gamma^{2}\left(  a\right)
}\left(  \sigma^{2}\left(  a,s\right)  -\sigma^{2}\left(  a,t\right)  \right)
^{2}.
\end{align*}
\vspace*{0.1in}

\emph{Step 3: Estimation of }$\P \left(  A_{a,z}\right)  \emph{\ via}$ $R$. We
can now write%
\begin{align*}
\P \left(  A_{a,z}\right)   &  =\P \left(  B\left(  a\right)  \leq z\leq
\max_{s\in\lbrack a,b]}\left\{  \rho\left(  s\right)  B\left(  a\right)
+R\left(  s\right)  \right\}  \right) \\
&  \leq\P \left(  B\left(  a\right)  \leq z\leq\max_{s\in\lbrack a,b]}\left\{
\rho\left(  s\right)  B\left(  a\right)  \right\}  +\max_{s\in\lbrack
a,b]}R\left(  s\right)  \right) \\
&  =\P \left(  B\left(  a\right)  \leq z\leq B\left(  a\right)  +\max
_{s\in\lbrack a,b]}\left\{  \left(  \rho\left(  s\right)  -1\right)  B\left(
a\right)  \right\}  +\max_{s\in\lbrack a,b]}R\left(  s\right)  \right) \\
&  \leq\P \left(  B\left(  a\right)  \leq z\leq B\left(  a\right)  +B\left(
a\right)  \mathrm{sgn}\left(  B\left(  a\right)  \right)  \max_{s\in\lbrack
a,b]}\left\vert \rho\left(  s\right)  -1\right\vert +\max_{s\in\lbrack
a,b]}R\left(  s\right)  \right) \\
&  =\P \left(  z-\frac{\max_{[a,b]}R}{1+\mathrm{sgn}\left(  B\left(  a\right)
\right)  \max_{s\in\lbrack a,b]}\left\vert \rho\left(  s\right)  -1\right\vert
}\leq B\left(  a\right)  \leq z\right)  ,
\end{align*}
where the last inequality holds provided that $\max_{s\in\lbrack
a,b]}\left\vert \rho\left(  s\right)  -1\right\vert <1$, also noting that
$\max_{[a,b]}R\geq0$ since $R\left(  a\right)  =0$. Since $\sigma^{2}\left(
a,\cdot\right)  $ is continuous on $[a,b]$, for $b-a$ small enough,
$\sigma^{2}\left(  a,s\right)  $ can be made arbitrarily close to $\sigma
^{2}\left(  a,a\right)  =\gamma^{2}\left(  a\right)  $. Thus, by definition of
$\rho$, we can make $\rho\left(  s\right)  $ close to $1$. Hence, there exists
$t_{0}>0$ such that $0<b-a<t_{0}$ implies $\max_{s\in\lbrack a,b]}\left\vert
\rho\left(  s\right)  -1\right\vert \leq1/2$, which implies in turn that%
\[
\frac{2}{3}\leq\frac{1}{1+\mathrm{sgn}\left(  B\left(  a\right)  \right)
\max_{s\in\lbrack a,b]}\left\vert \rho\left(  s\right)  -1\right\vert }\leq2.
\]
Consequently,
\begin{align*}
\P \left(  A_{a,z}\right)   &  \leq\P \left(  z-2\max_{[a,b]}R\leq B\left(
a\right)  \leq z\right) \\
&  =\int_{(z-2\max_{[a,b]}R)/\gamma\left(  a\right)  }^{z/\gamma\left(
a\right)  }\frac{1}{\sqrt{2\pi}}\exp\left(  -\frac{x^{2}}{2}\right)  dx\\
&  \leq\frac{2}{\gamma\left(  a\right)  \sqrt{2\pi}}\mathrm{E}\left[
\max_{[a,b]}R\right]  .
\end{align*}
In the last equality above, we used the independence of the process $R$ and
the random variable $B\left(  a\right)  $.\vspace*{0.1in}

\emph{Step 4: Estimation of }$\mathrm{E}\left[  \max_{[a,b]}R\right]  $. We
saw in Step 2 that $R$ is a centered Gaussian process with a canonical metric
$\mathrm{E}^{1/2}\left[  \left(  R\left(  s\right)  -R\left(  t\right)
\right)  ^{2}\right]  $ bounded above by $\delta\left(  s,t\right)  $, the
canonical metric of $B$. From our assumption (\ref{e1}), we have
$\delta\left(  s,t\right)  \leq\ell\gamma\left(  |t-s|\right)  $. This means
that, to cover the interval $[a,b]$ with balls of radius $x$ in the canonical
metric of $R$, we require no more than $N\left(  x\right)  :=\left(
b-a\right)  /\gamma^{-1}\left(  x/\sqrt{\ell}\right)  $ such balls. Since
$\gamma$ is increasing, the diameter of $[a,b]$ in this canonical metric is
bounded above by $\sqrt{\ell}\gamma\left(  b-a\right)  $. We can thus apply
the classical entropy upper bound of R. Dudley (see \cite{Adler:90}) to
obtain, for some universal constant $C_{\mathrm{univ}}$,
\begin{equation}
\label{FromDudley}%
\begin{split}
&  \frac{1}{C_{\mathrm{univ}}}\mathrm{E}\left[  \max_{[a,b]}R\right]  \leq
\int_{0}^{\sqrt{k}\gamma\left(  b-a\right)  }\sqrt{\log N\left(  x\right)
}dx\\
&  \qquad=\int_{0}^{\sqrt{k}\gamma\left(  b-a\right)  }\sqrt{\log\frac
{b-a}{\gamma^{-1}\left(  x/\sqrt{k}\right)  }}dx\\
&  \qquad=\sqrt{k}\int_{0}^{b-a}\sqrt{\log\frac{b-a}{r}}d\gamma\left(
r\right) \\
&  \qquad\leq\sqrt{k}\int_{0}^{\left(  b-a\right)  /2}\sqrt{\log\frac{b-a}{r}%
}d\gamma\left(  r\right)  +\sqrt{k}\left(  \gamma\left(  b-a\right)
-\gamma\left(  \frac{b-a}{2}\right)  \right)  \sqrt{\log2}%
\end{split}
\end{equation}
where we used a change of variables. By an integration by parts and another
change of variables, using the fact that since $B$ is a.s. continuous, we have
$\gamma\left(  x\right)  =o\left(  1/\sqrt{\log\left(  1/x\right)  }\right)
$, we get
\begin{equation}
\int_{0}^{\left(  b-a\right)  /2}\sqrt{\log\frac{b-a}{r}}d\gamma\left(
r\right)  =\gamma\left(  (b-a)/2\right)  \sqrt{\log2}+\int_{0}^{1/2}%
\gamma\left(  (b-a)y\right)  \frac{dy}{y\sqrt{\log\left(  1/y\right)  }}.
\label{EntropyCalc}%
\end{equation}
Relations (\ref{FromDudley}) and (\ref{EntropyCalc}) now yield%
\[
\mathrm{E}\left[  \max_{[a,b]}R\right]  \leq C_{\mathrm{univ}}\sqrt{\ell
}\left(  \gamma\left(  b-a\right)  \sqrt{\log2}+\int_{0}^{1/2}\gamma\left(
(b-a)y\right)  \frac{dy}{y\sqrt{\log\left(  1/y\right)  }}\right)  ,
\]
where we recognize the function $f_{\gamma}$ identified in the statement of
the proposition.\vspace{0.1in}

\emph{Step 5: Conclusion}. The results of Step 3 and Step 4 now imply, for
another universal constant $C_{\mathrm{univ}}^{\prime}$%
\begin{equation}
\P \left(  A_{a,z}\right)  \leq\frac{C_{\mathrm{univ}}^{\prime}}{\gamma\left(
a\right)  }f_{\gamma}\left(  b-a\right)  . \label{PAaz}%
\end{equation}
There is nothing in the arguments of Steps 2,3, and 4 which prevents us from
relating everything to $B\left(  b\right)  $ rather than $B\left(  a\right)
$. The only quantitative differences occur as follows: with $R$ relative to
$b$, not $a$,

\begin{itemize}
\item at the end of Step 2, the expression for $\mathrm{E}\left[  \left(
R\left(  s\right)  -R\left(  t\right)  \right)  ^{2}\right]  $ involved $b$,
not $a$, but this is still bounded above by $\delta^{2}\left(  s,t\right)  $,
so there is no quantitative change in the application of Step 2 in Step 4,
i.e. the upper bound of Step 4 remains unchanged for the new $R$;

\item the expression for $\rho_{s}$ in\ Step 2 is now relative to $b$ rather
than $a$, but we can still have $\max_{s\in\lbrack a,b]}\left\vert \rho\left(
s\right)  -1\right\vert \leq1/2$ for $b-a$ small enough;

\item at the end of Step 3, we now get%
\begin{align*}
\P \left(  A_{b,z}\right)   &  \leq\P \left(  z-2\max_{[a,b]}R\leq B\left(
b\right)  \leq z\right) \\
&  =\int_{(z-2\max_{[a,b]}R)/\gamma\left(  b\right)  }^{z/\gamma\left(
b\right)  }\frac{1}{\sqrt{2\pi}}\exp\left(  -\frac{x^{2}}{2}\right)  dx\\
&  \leq\frac{2}{\gamma\left(  b\right)  \sqrt{2\pi}}\mathrm{E}\left[
\max_{[a,b]}R\right]  .
\end{align*}

\end{itemize}

Consequently, we have proved%
\begin{equation}
\P \left(  A_{b,z}\right)  \leq\frac{C_{\mathrm{univ}}^{\prime}}{\gamma\left(
b\right)  }f_{\gamma}\left(  b-a\right)  . \label{PAbz}%
\end{equation}
Since the estimates (\ref{PAaz}) and (\ref{PAbz}) are uniform in $z$, they
also apply to $\P \left(  A_{a,-z}\right)  $ and $\P \left(  A_{b,-z}\right)
$ respectively. Since $\gamma\left(  b\right)  >\gamma\left(  a\right)  $,
(\ref{4P}) now implies the statement of the proposition, with $c_{u}=\left(
8/\sqrt{2\pi}\right)  C_{\mathrm{univ}}$.
\end{proof}

\section{Upper Hausdorff measure bound for the hitting probabilities}

Recall that $B=(B(t),t\in\mathbb{R}_{+})$ is a centered continuous Gaussian
process in $\mathbb{R}$ with variance $\gamma^{2}(t)$ for some continuous
strictly increasing function $\gamma:\mathbb{R}_{+}\rightarrow\mathbb{R}_{+}$
with $\lim_{0}\gamma=0$, such that for some constant $\ell\geq1$ and for all
$s,t\geq0$,
\[
(1/\ell)\gamma^{2}(|t-s|)\leq\mathrm{E}[|B(t)-B(s)|^{2}]\leq\ell\gamma
^{2}(|t-s|).
\]

The aim of this section is to prove an upper bound for the probability that a
vector of $d$ iid copies of $B$, also denoted by $B$, hits a set
$A\subset\mathbb{R}^{d}$, in terms of a certain Hausdorff measure of $A$.

For a function $\varphi:\mathbb{R}_{+}\rightarrow\mathbb{R}_{+}$,
right-continuous and non-decreasing near zero with $\lim_{0+}\varphi=0$, we
define the $\varphi$-Hausdorff measure of a set $A\subset\mathbb{R}^{d}$ as
\[
{\mathcal{H}}_{\varphi}(A)=\lim_{\varepsilon\rightarrow0^{+}}\inf\left\{
\sum_{i=1}^{\infty}\varphi(2r_{i}):A\subseteq\bigcup_{i=1}^{\infty}%
B(x_{i},r_{i}),\ \sup_{i\geq1}r_{i}\leq\varepsilon\right\}  ,
\]
where $B(x_{i},r_{i})$ denotes the open ball of center $x_{i}$ and radius
$r_{i}$ in $\mathbb{R}^{d}$. When $\varphi(r)=r^{\beta}$, $\beta>0$, we write
${\mathcal{H}}_{\beta}$ and call it the $\beta$-Hausdorff measure. When
$\beta\leq0$, we define $\mathcal{H}_{\beta}$ to be infinite.

We first provide an upper bound for the probability that $B$ hits a small ball
in $\mathbb{R}^{d}$. The papers \cite[Lemma 7.8]{Xiao:09} or \cite[Lemma
3.1]{Bierme:09} contain some analogous results in the case that $\gamma$ is a
power, but the techniques there do not seem to be applicable to the case of
general $\gamma$.

\begin{prop}
\label{sma} For all $0<a<b<\infty$, with $b-a$ small enough, for all
$z\in\mathbb{R}^{d}$, $\varepsilon>0$,
\[
\P (B([a,b])\cap B(z\,,\varepsilon)\neq\emptyset)\leq\left(  \varepsilon
\frac{2\kappa\gamma\left(  b\right)  }{\gamma^{2}\left(  a\right)  }\left(
1+\frac{1}{F\left(  \left\vert z\right\vert \right)  }\right)  +\frac
{c_{u}\sqrt{\ell}}{\gamma\left(  a\right)  }f_{\gamma}\left(  b-a\right)
\right)  ^{d}.
\]
where $B(z,\varepsilon)$ denotes the open ball of center $z$ and radius
$\varepsilon$ in $\mathbb{R}^{d}$, $\ell$ and $\gamma$ are from condition
\textnormal{(\ref{e1})}, $c_{u}$ and $f_{\gamma}$ are defined in Proposition
\text{\ref{hitd1}},%
\begin{align*}
F\left(  z\right)   &  =\left\{
\begin{array}
[c]{r}%
1\ \mathrm{for~}z\leq\gamma\left(  b\right)  ,\\
1-e^{-2\mathrm{arctanh}\left(  \gamma^{2}\left(  b\right)  /z^{2}\right)
}\ \mathrm{for~}z>\gamma\left(  b\right)  ,
\end{array}
\right. \\
\kappa &  =\P \left[  \inf_{[a,b]}B>\gamma\left(  b\right)  \right]  .
\end{align*}

In particular, when Condition \textnormal{(\ref{gammamult})} is satisfied, with
$L$ as in Corollary \text{\ref{hitd1cor}},
\[
\P (B([a,b])\cap B(z\,,\varepsilon)\neq\emptyset)\leq\left(  \varepsilon
\frac{2\kappa\gamma\left(  b\right)  }{\gamma^{2}\left(  a\right)  }\left(
1+\frac{1}{F\left(  \left\vert z\right\vert \right)  }\right)  +\frac
{L\sqrt{\ell}}{\gamma\left(  a\right)  }\gamma\left(  b-a\right)  \right)
^{d}.
\]

\end{prop}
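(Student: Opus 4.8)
The plan is to reduce the $d$-dimensional ball-hitting event to a product of one-dimensional events, exploiting the independence of the coordinates, and then to estimate each one-dimensional factor by combining the point-hitting bound from Proposition~\ref{hitd1} with a bound on the probability that a scalar component of $B$ enters a small interval around $z_j$ without hitting $z_j$ exactly. Specifically, if $B([a,b])$ meets the open ball $B(z,\varepsilon)$, then for each coordinate $j$ the scalar process $B_j$ must come within $\varepsilon$ of $z_j$ at some (common) time; hence $\P(B([a,b])\cap B(z,\varepsilon)\neq\emptyset)$ is bounded above by $\prod_{j=1}^d \P\bigl(\inf_{s\in[a,b]}|B_j(s)-z_j|<\varepsilon\bigr)$, and it suffices to bound a single factor by $\varepsilon\,\frac{2\kappa\gamma(b)}{\gamma^2(a)}\bigl(1+\frac{1}{F(|z_j|)}\bigr)+\frac{c_u\sqrt\ell}{\gamma(a)}f_\gamma(b-a)$. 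Wait—one must be slightly careful that the coordinates hit their targets \emph{at the same time} $s$, so the product bound is an inequality using that each factor is an upper bound for a marginal event; this is legitimate since the joint event is contained in the intersection of the $d$ marginal events, which by independence has probability equal to the product.

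Next I would estimate one scalar factor. Split according to whether $B_j$ actually hits the point $z_j$ or merely enters $(z_j-\varepsilon,z_j+\varepsilon)$ without hitting it. The first piece is controlled directly by Proposition~\ref{hitd1}, giving the term $\frac{c_u\sqrt\ell}{\gamma(a)}f_\gamma(b-a)$ (and, under~(\ref{gammamult}), its Corollary~\ref{hitd1cor} version with $L$). For the second piece, observe that if $B_j$ enters the interval around $z_j$ but never equals $z_j$, then $B_j$ stays strictly on one side of $z_j$ throughout $[a,b]$ while getting within $\varepsilon$ of it; say $B_j(s)\in(z_j,z_j+\varepsilon)$ for some $s$ while $\inf_{[a,b]}B_j\ge z_j$ (or the mirror-image event). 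I would bound $\P(\inf_{[a,b]}B_j\ge z_j,\ \inf_{[a,b]}B_j< z_j+\varepsilon)$ by writing it as $\P(\inf_{[a,b]}B_j\ge z_j)-\P(\inf_{[a,b]}B_j\ge z_j+\varepsilon)$ and controlling the increment of the distribution function of the infimum. Since $B_j$ is Gaussian and continuous, $\inf_{[a,b]}B_j$ has a density; the cleanest route is to bound that density by relating it to the density of $B_j(b)$ (or $B_j(a)$) via a reflection/regression argument like the one in the proof of Proposition~\ref{hitd1}, producing the factor $\varepsilon$ times a constant of the stated form $\frac{2\kappa\gamma(b)}{\gamma^2(a)}(1+1/F(|z_j|))$.

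The function $F$ and the constant $\kappa=\P[\inf_{[a,b]}B>\gamma(b)]$ are the fingerprints of this density bound, so the main obstacle is precisely the estimate of the density of $\inf_{s\in[a,b]}B(s)$ near a point $z$, uniformly in $z$, with explicit dependence on $|z|$. I expect to obtain this by conditioning: write $\P(\inf_{[a,b]}B\ge z)$ and differentiate in $z$; decompose according to the location of $B(b)$, using that on the event $\{B(b)>\gamma(b)\}$ the conditional process is well-separated from low levels (this is where $\kappa$ and the $\mathrm{arctanh}$-term in $F$ enter, via the Gaussian correlation structure $\sigma^2(b,s)/(\gamma(b)\gamma(s))$ and the inequality controlling how far the conditional mean can drop). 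The regime $|z|\le\gamma(b)$ is handled crudely with $F\equiv1$; the regime $|z|>\gamma(b)$ needs the quantitative tail estimate giving $F(z)=1-e^{-2\,\mathrm{arctanh}(\gamma^2(b)/z^2)}$. Once the density bound $\le \frac{2\kappa\gamma(b)}{\gamma^2(a)}(1+1/F(|z|))$ is in hand, integrating over an interval of length $\varepsilon$ yields the second term, and assembling the two pieces coordinate by coordinate, then taking the $d$-fold product, gives the claimed bound; the final ``in particular'' statement is immediate by substituting Corollary~\ref{hitd1cor} for Proposition~\ref{hitd1} in the point-hitting term.
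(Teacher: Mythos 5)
Your reduction to $d=1$ and your decomposition of the scalar event into ``hits the point $z$'' plus ``approaches $z$ from one side without touching it'' are exactly the paper's setup: the second piece is the event $\{z<\inf_{[a,b]}B\le z+\varepsilon\}$ (equivalently $\{0<\inf_s(B(s)-z)_+\le\varepsilon\}$), and the first piece is handled by Proposition~\ref{hitd1} / Corollary~\ref{hitd1cor}. So far, so good.

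The genuine gap is the step you describe only as ``I expect to obtain this by conditioning \dots via a reflection/regression argument like the one in the proof of Proposition~\ref{hitd1}.'' That step is the entire content of the proposition: one needs a quantitative, uniform-in-$z$ bound on the density of $\inf_{[a,b]}B$ (above its atom), with the explicit dependence $\frac{2\kappa\gamma(b)}{\gamma^{2}(a)}\bigl(1+1/F(|z|)\bigr)$. No reflection principle is available for a general non-Markovian Gaussian process, and the regression argument of Proposition~\ref{hitd1} only controls $\mathrm{E}\bigl[\max_{[a,b]}R\bigr]$ for point hitting; neither tool yields a density bound for the infimum, and ``differentiating $\P(\inf_{[a,b]}B\ge z)$ in $z$'' presupposes exactly the bound you are trying to prove (even the existence of the density needs an argument). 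The paper gets this via a different and substantive mechanism: it adapts the Nourdin--Viens Malliavin-calculus density formula to a random variable with an atom at the edge of its support (Proposition~\ref{dens}), computes $G_X$ for $X=\inf_s(B(s)-z)_+-\mu$ through the Mehler representation with the interpolation $B^{(u)}=e^{-u}B+\sqrt{1-e^{-2u}}\tilde B$, and bounds $g_X$ below using a lower bound on $\P\bigl[\inf_{[a,b]}\tilde B>z\sqrt{\tanh(u/2)}\bigr]$ (Lemma~\ref{Gaussinf}); the constants $\kappa=\P[\inf_{[a,b]}B>\gamma(b)]$ and $F(z)=1-e^{-2\,\mathrm{arctanh}(\gamma^{2}(b)/z^{2})}$ arise precisely from restricting the $\int_0^\infty e^{-u}\,du$ integral to $u\le 2\,\mathrm{arctanh}(\gamma^{2}(b)/z^{2})$. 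You correctly identified these constants as ``fingerprints'' of a density bound, but your proposal does not supply the machinery that produces them, so the core estimate remains unproved.
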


\begin{rmk}
Note that $\mathrm{arctanh}\left(  x\right)  $ is equivalent to $x$ for $x$
small, and therefore, for large $z$, $1/F\left(  z\right)  $ in the above
proposition is equivalent to $z^{2}/\left(  2\gamma^{2}\left(  b\right)
\right)  $.
\end{rmk}

\begin{rmk}
\label{d1}Since the various components of $B$ are independent, it is
equivalent to prove this proposition with $d=1$ only.
\end{rmk}

The proof of this proposition needs some preparation. We first explain why the
results of Section \ref{hitd1sec} on hitting points in one dimension will be
needed to prove this proposition. The event whose probability we need to
estimate is%
\begin{align*}
D  &  :=\left\{  B([a,b])\cap B(z\,,\varepsilon)\neq\emptyset\right\}
=\left\{  \inf_{s\in\lbrack a,b]}\left\vert B\left(  s\right)  -z\right\vert
\leq\varepsilon\right\} \\
&  =\left\{  0<\inf_{s\in\lbrack a,b]}\left\vert B\left(  s\right)
-z\right\vert \leq\varepsilon\right\}  \cup\left\{  B\left(  [a,b]\right)  \ni
z\right\}  =:D_{1}\cup D_{2}.
\end{align*}
The second event $D_{2}$ in the last line above is the one whose probability
we estimated in Section \ref{hitd1sec}. We choose to separate it from the
remaining event $D_{1}$ because, since $B$ hits points with positive
probability during $[a,b]$, the random variable $Z:=\inf_{s\in\lbrack
a,b]}\left\vert B\left(  s\right)  -z\right\vert $ has an atom at $0$. Note
that $D_{1}$ and $D_{2}$ are disjoint. In any case, Proposition \ref{hitd1}
shows that to prove Proposition \ref{sma}, it is sufficient to establish%
\begin{equation}
\P \left(  D_{1}\right)  =\P \left(  0<Z\leq\varepsilon\right)  \leq
C\varepsilon\label{D1}%
\end{equation}
for the appropriate constant $C$. To prove this, it would be sufficient to
show that, $Z$ has a bounded density on $(0,+\infty)$.

To establish such a density bound, we must take a minor detour via the
Malliavin calculus. A criterion was established in \cite[Theorem
3.1]{Nourdin:09} for proving the existence of a density and a method for
estimating it quantitatively. That technique does not apply to random
variables with atoms, but in our case, $Z$ has a single atom, at the point $0$
at the edge of its support, and we are able to adapt the method of
\cite{Nourdin:09} to such a situation.

The needed elements of Malliavin calculus are the following. Details can be
found in \cite[Chapters 2 and 10]{Nourdin:12}. Let $D$ be the Malliavin
derivative operator in the Wiener space $L^{2}\left(  \Omega,\mathcal{F}%
,\P \right)  $ induced by the process $B$. Let $\mathbb{D}^{1,2}$ be the
Gross-Sobolev subspace of $L^{2}\left(  \Omega\right)  $, identified with the
domain of $D$. Let $X$ be a centered random variable in $\mathbb{D}^{1,2}$ and
define
\begin{equation}
G_{X}:=\int_{0}^{+\infty}due^{-u}\mathrm{E}\left[  \left.  \int_{\mathbb{R}%
_{+}}D_{r}X~D_{r}^{\left(  u\right)  }X^{\left(  u\right)  }dr\right\vert
\mathcal{F}\right]  , \label{GX}%
\end{equation}
and%
\begin{equation}
g_{X}\left(  x\right)  =\mathrm{E}\left[  G_{X}~|~X=x\right]  , \label{gx}%
\end{equation}
where the notation $X^{\left(  u\right)  }$ denotes a random variable with the
same law as $X$, but constructed using a copy $B^{u}$ of the Gaussian field
$B$ such that the correlation coefficient $\mathrm{Corr}\left(  B,B^{u}%
\right)  =e^{-u}$, and $D^{\left(  u\right)  }$ is the Malliavin derivative
operator on the Wiener space induced by $B^{u}$. The expression in (\ref{GX})
coincides with the random variable $\left\langle DX,-DL^{-1}X\right\rangle
_{B}$ where $L^{-1}$ is the pseudo-inverse of the generator of the
Ornstein-Uhlenbeck semigroup on $B$'s Wiener space, and $\left\langle
\cdot,\cdot\right\rangle _{B}$ is the canonical inner product defined by $B$'s
Gaussian Wiener integrals; this coincidence comes from the so-called Mehler
formula, and we also have that for every continuously differentiable function
$f$ with bounded derivative,
\begin{equation}
\mathrm{E}\left[  Xf\left(  X\right)  \right]  =\mathrm{E}\left[  g_{X}\left(
X\right)  f^{\prime}\left(  X\right)  \right]  . \label{NP}%
\end{equation}
In the sequel, we will only need to use (\ref{GX}), (\ref{gx}), and
(\ref{NP}). We have the following

\begin{prop}
\label{dens}Let $X$ be a centered random variable in $\mathbb{D}^{1,2}$, and
$G_{X}$ and $g_{X}$ be defined in \textnormal{(\ref{GX})}, \textnormal{(\ref{gx})}.
The support of the law of $X$ is an interval $[\alpha,\beta]$ with
$-\infty\leq\alpha<0<\beta\leq+\infty$. Assume there exists $\alpha^{\prime
}\in(\alpha,0)$ such that $g_{X}\left(  x\right)  >0$ for all $x\in
\lbrack\alpha^{\prime},\beta)$. Then $X$ has a density $\rho$ on
$[\alpha^{\prime},\beta)$, and for almost every $z\in\lbrack\alpha^{\prime
},\beta)$,%
\begin{equation}
\rho\left(  x\right)  =\frac{\mathrm{E}\left[  \left\vert X\right\vert
\right]  }{2g_{X}\left(  x\right)  }\exp\left(  -\int_{0}^{x}ydy/g_{X}\left(
y\right)  \right)  . \label{rhogee}%
\end{equation}

\end{prop}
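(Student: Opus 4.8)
The plan is to adapt the Malliavin-calculus density formula of Nourdin and Viens \cite{Nourdin:09} (see also \cite{Nourdin:12}) to a random variable whose support is a one-sided-bounded interval containing the origin in its interior, tracking carefully that the formula is only needed on the part of the support where the conditional quantity $g_X$ stays strictly positive. First I would establish the identity $\mathrm{E}\left[Xf(X)\right]=\mathrm{E}\left[g_X(X)f'(X)\right]$ of (\ref{NP}) for all bounded $\mathcal{C}^1$ functions $f$ with bounded derivative; this is the standard integration-by-parts consequence of the Mehler-formula representation $G_X=\langle DX,-DL^{-1}X\rangle_B$ recalled in the text, and I would take it as given from the cited references, noting only that the centering of $X$ is what makes the boundary terms vanish.

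Next I would upgrade (\ref{NP}) to cover functions $f$ that are merely Lipschitz and supported in $[\alpha',\beta)$, by a routine approximation argument, and then test it against $f$ equal to an antiderivative of an indicator. Concretely, for $\alpha'\le c<x<\beta$, apply the identity with $f(y)=\mathbf{1}_{(c,x]}(y)$ smoothed out; this yields that the sub-probability measure $\mathbf{1}_{[\alpha',\beta)}\,\mathcal{L}(X)$, when integrated against $y/g_X(y)$, reproduces itself in a differential sense. The upshot, exactly as in \cite{Nourdin:09}, is that the function
\[
\phi(x):=\mathrm{E}\left[\left|X\right|\,\mathbf{1}_{\{X>x\}}\right]
\]
is absolutely continuous on $[\alpha',\beta)$, is strictly positive there (because $g_X>0$ forces $X$ to have no atom and to charge every subinterval, using again the positivity on a left-neighborhood of $0$ together with the fact that $0$ is interior to the support), and satisfies the first-order linear ODE $\phi'(x)=-\frac{x}{g_X(x)}\,\phi(x)$ in the a.e. sense, with the normalization $\phi(0)=\tfrac12\mathrm{E}\left[\left|X\right|\right]$ coming from $\mathrm{E}\left[\left|X\right|\right]=2\mathrm{E}\left[X^+\right]=2\mathrm{E}\left[X^-\right]$ and the centering of $X$. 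Solving this ODE gives
\[
\phi(x)=\frac{\mathrm{E}\left[\left|X\right|\right]}{2}\exp\left(-\int_0^x \frac{y\,dy}{g_X(y)}\right),
\]
and since $\phi$ is absolutely continuous with derivative $-\rho(x)\,|x|$... — more precisely, since $\mathrm{E}\left[|X|\mathbf{1}_{\{X>x\}}\right]$ differentiates to $-x\rho(x)$ where $\rho$ is the density — one reads off that $X$ has a density on $[\alpha',\beta)$ given by (\ref{rhogee}).

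The main obstacle I anticipate is the bookkeeping around the atom and the one-sidedness: unlike in the original Nourdin--Viens setting, here $g_X$ is only assumed positive on $[\alpha',\beta)$, not on all of the support, so I must be careful that every integration-by-parts localizes to that region and that no contribution leaks in from $[\alpha,\alpha')$ where $X$ might (a priori) have an atom or vanishing $g_X$. The key technical point is to show that the positivity of $g_X$ on $[\alpha',\beta)$ propagates to strict positivity of $\phi$ there and to the absence of atoms of $X$ in the open interval $(\alpha',\beta)$ — this is what makes the ODE both well-posed (no division by a vanishing $g_X$, no jump in $\phi$) and equivalent to the existence of a bona fide density. Once that is in place, the passage from the ODE for $\phi$ to formula (\ref{rhogee}) for $\rho$ is immediate, and the explicit constant $\mathrm{E}\left[|X|\right]/2$ is just the boundary value $\phi(0)$.
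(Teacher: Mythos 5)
Your overall route is the same as the paper's (which itself adapts \cite[Theorem 3.1]{Nourdin:09}): use the identity (\ref{NP}), tested on antiderivatives of indicators, to get absolute continuity of the law of $X$ on $[\alpha^{\prime},\beta)$, then derive a first-order linear ODE for a tail functional with boundary value $\mathrm{E}[|X|]/2$ at $0$, and read off (\ref{rhogee}). The genuine problem is your choice of auxiliary function. With $\phi(x):=\mathrm{E}\left[|X|\mathbf{1}_{\{X>x\}}\right]$, neither of the two identities you invoke holds on $(\alpha^{\prime},0)$: the a.e.\ derivative of $\phi$ there is $-|x|\rho(x)=x\rho(x)\leq 0$, not $-x\rho(x)$, and the claimed ODE $\phi^{\prime}(x)=-x\phi(x)/g_{X}(x)$ would force $\phi^{\prime}>0$ on $(\alpha^{\prime},0)$ (since $-x>0$, $\phi>0$, $g_{X}>0$ there), contradicting the fact that $\phi$ is non-increasing. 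Since $[\alpha^{\prime},0)$ is precisely the region this proposition is meant to cover beyond $x\geq 0$ (and the region needed in the application to $Z$), this is a step that fails as written; your own hesitation between $-\rho(x)|x|$ and $-x\rho(x)$ is the symptom that the two cannot be reconciled for this $\phi$.

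The fix is to drop the absolute value and work, as the paper does, with $\varphi(x):=\int_{x}^{\beta}y\rho(y)\,dy=\mathrm{E}\left[X\mathbf{1}_{\{X>x\}}\right]$. For this function $\varphi^{\prime}(x)=-x\rho(x)$ a.e.; applying (\ref{NP}) to continuous $f$ compactly supported in $[\alpha^{\prime},\beta)$ with antiderivative $F(x)=\int_{\alpha^{\prime}}^{x}f$, and integrating by parts (the boundary terms vanish because $F(\alpha^{\prime})=0$ and $\lim_{x\rightarrow\beta}\varphi(x)=0$ by integrability of $X$), gives $g_{X}(x)\rho(x)=\varphi(x)$ for a.e.\ $x\in[\alpha^{\prime},\beta)$; combining the two relations yields $\varphi^{\prime}/\varphi=-x/g_{X}$, and with $\varphi(0)=\mathrm{E}[X^{+}]=\mathrm{E}[|X|]/2$ (centering) one obtains (\ref{rhogee}). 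Also, the existence-of-density step should not be waved through as ``$g_{X}>0$ forces no atoms'': the clean argument, as in the paper, is to apply (\ref{NP}) to $f(x)=\int_{\alpha^{\prime}}^{x}\mathbf{1}_{A}(y)\,dy$ for a Lebesgue-null Borel set $A\subset[\alpha^{\prime},\beta)$, so that $f\equiv 0$ and hence $0=\mathrm{E}\left[\mathbf{1}_{A}(X)g_{X}(X)\right]$, whence $\P(X\in A)=0$ by positivity of $g_{X}$ on $[\alpha^{\prime},\beta)$. With these two corrections your argument coincides with the paper's proof.
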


\begin{proof}
The proof of this proposition varies only slightly from that of \cite[Theorem
3.1]{Nourdin:09}; we provide it here for completeness. The statement about the
support of $X$ is well-known \cite[Proposition 2.1.7]{Nualart:06}. Let $A$ be
a Borel set included in $[\alpha^{\prime},\beta)$, and assume that its
Lebesgue measure is $0$. By using a monotone approximation argument, we can
apply (\ref{NP}) to $f\left(  x\right)  =\int_{\alpha^{\prime}}^{x}%
\mathbf{1}_{A}\left(  y\right)  dy$. Thus
\[
0=\mathrm{E}\left[  Xf\left(  X\right)  \right]  =\mathrm{E}\left[
\mathbf{1}_{A}\left(  X\right)  g_{X}\left(  X\right)  \right]  .
\]
Since $A\subset\lbrack\alpha^{\prime},\beta)$, by assumption, $g_{X}\left(
X\right)  >0$ on the event $\left\{  X\in A\right\}  $. Consequently,
$\mathbf{1}_{A}\left(  X\right)  =0$ almost surely, i.e. $\P \left[  X\in
A\right]  =0$, which means the law of $X$ restricted to $[\alpha^{\prime
},\beta)$ is absolutely continuous w.r.t. Lebesgue's measure, and therefore
$X$ has a density $\rho$ on $[\alpha^{\prime},\beta)$, and note that
$\rho\left(  X\right)  $ is positive almost surely.

Now for any continuous function $f$ with compact support in $[\alpha^{\prime
},\beta)$, and its anti-derivative $F\left(  x\right)  =\int_{\alpha^{\prime}%
}^{x}f\left(  y\right)  dy$ (which is necessarily bounded), by (\ref{NP}) we
have
\[
\mathrm{E}\left[  g_{X}\left(  X\right)  f\left(  X\right)  \right]
=\mathrm{E}\left[  XF\left(  X\right)  \right]  =\int_{\alpha^{\prime}}%
^{\beta}\rho\left(  y\right)  yF\left(  y\right)  dy.
\]
We perform the integration by parts with parts $F\left(  y\right)  $ and
$\rho\left(  y\right)  ydy$. Note that $\varphi$ is differentiable almost
everywhere on $[\alpha^{\prime},\beta)$, and is bounded since $X\in
L^{2}\left(  \Omega\right)  \subset L^{1}\left(  \Omega\right)  $. Thus, with
$\varphi\left(  x\right)  =\int_{x}^{\beta}y\rho\left(  y\right)  dy$, we get%
\[
\mathrm{E}\left[  g_{X}\left(  X\right)  f\left(  X\right)  \right]
=\int_{\alpha^{\prime}}^{\beta}f\left(  y\right)  \varphi\left(  y\right)
dy+\lim_{x\rightarrow\beta}F\left(  x\right)  \varphi\left(  x\right)
-\lim_{x\rightarrow\alpha^{\prime}}F\left(  x\right)  \varphi\left(  x\right)
.
\]
Here, $\lim_{x\rightarrow\alpha^{\prime}}F\left(  x\right)  =0$, by
definition, and $\lim_{x\rightarrow\beta}\varphi\left(  x\right)  =0$ since
$X\in L^{1}\left(  \Omega\right)  $. Thus%
\[
\mathrm{E}\left[  g_{X}\left(  X\right)  f\left(  X\right)  \right]
=\int_{\alpha^{\prime}}^{\beta}f\left(  y\right)  \varphi\left(  y\right)
dy=\mathrm{E}\left[  \frac{\varphi\left(  X\right)  }{\rho\left(  X\right)
}f\left(  X\right)  \right]  .
\]
This implies that on the event $\left\{  X\in\lbrack\alpha^{\prime}%
,\beta)\right\}  $, $g_{X}\left(  X\right)  =\varphi\left(  X\right)
/\rho\left(  X\right)  $ almost surely, which, since $[\alpha^{\prime},\beta)$
is inside the support of $X$, implies that for almost every $x\in\lbrack
\alpha^{\prime},\beta)$, $g_{X}\left(  x\right)  =\varphi\left(  x\right)
/\rho\left(  x\right)  $.

Since by definition, $\varphi^{\prime}\left(  x\right)  =-x\rho\left(
x\right)  ,$we get an ordinary differential equation for $\varphi$, whose
unique solution is identical to the relation (\ref{rhogee}), provided one uses
the boundary condition given by $\varphi\left(  0\right)  $, which equals
$\mathrm{E}\left[  \left\vert X\right\vert \right]  /2$ because $\mathrm{E}%
\left[  X\right]  =0$.
\end{proof}

This proposition provides a convenient criterion to establish existence and
upper bounds on densities: if one can show that $g_{X}\left(  x\right)  \geq
c>0$ for all $x\in\lbrack\alpha^{\prime},\beta)$, then (\ref{rhogee}) implies
for all $x\in\lbrack\alpha^{\prime},\beta)$ :%
\begin{equation}
\rho_{X}\left(  x\right)  \leq\frac{\mathrm{E}\left[  \left\vert X\right\vert
\right]  }{2c}. \label{densUB}%
\end{equation}
This follows from the fact that $g_{X}$ is a positive function, so that for
any $x$, whether positive or negative, the exponential in the density formula
(\ref{rhogee}) is always less than $1$. The positivity of $g_{X}$ is
well-known (see \cite{Peccati:09}), and can also be inferred directly from
formula (\ref{rhogee}).

As it turns out, the random variable $Z$ is difficult to estimate via
Proposition \ref{dens}, because the expression one finds for $g_{Z-\mathrm{E}%
Z}$ via (\ref{GX}) is an integral of a signed function. However, an easy
expansion of $D_{1}$ is helpful. Since in $D_{1}$, $Z$ is positive, and $B$ is
continuous, this means that either $\inf_{s\in\lbrack a,b]}\left\vert B\left(
s\right)  -z\right\vert $ was attained for the whole trajectory $B\left(
[a,b]\right)  $ below the level $z$, or above it, and these two events are
disjoint. Therefore%
\begin{align}
\P \left(  D_{1}\right)   &  =\P \left(  0<\inf_{s\in\lbrack a,b]}\left(
B\left(  s\right)  -z\right)  _{+}\leq\varepsilon\right)  +\P \left(
0<\inf_{s\in\lbrack a,b]}\left(  B\left(  s\right)  -z\right)  _{-}%
\leq\varepsilon\right) \nonumber\\
&  =\P \left(  0<\inf_{s\in\lbrack a,b]}\left(  B\left(  s\right)  -z\right)
_{+}\leq\varepsilon\right)  +\P \left(  0<\inf_{s\in\lbrack a,b]}\left(
-B\left(  s\right)  -\left(  -z\right)  \right)  _{+}\leq\varepsilon\right)
\nonumber\\
&  =:D_{z}^{\prime}+D_{-z}^{\prime} \label{Deedee}%
\end{align}
where in the last line we used the fact that $B$ has a symmetric law.
According to the strategy leading to (\ref{densUB}), we only need to study the
random variable $G_{X}$ relative to
\begin{align*}
X  &  :=\inf_{s\in\lbrack a,b]}\left(  B\left(  s\right)  -z\right)  _{+}%
-\mu,\\
\mu &  :=\mathrm{E}\left[  \inf_{s\in\lbrack a,b]}\left(  B\left(  s\right)
-z\right)  _{+}\right]  .
\end{align*}

\begin{proof}
[Proof of Proposition \ref{sma}]~Recall that by Remark \ref{d1}, we assume $B$
is scalar.\vspace*{0.1in}

\emph{Step 0: what we must prove}. The centered random variable $X$ above is
supported in $[-\mu,+\infty)$. Moreover, it is a Lipshitz functional of a
continuous Gaussian process, and as such, belongs to $\mathbb{D}^{1,2}$. From
Proposition \ref{dens} and relation (\ref{densUB}), it is sufficient to prove
that there is a positive constant $c$ such that for any $x>-\mu$,
\[
g_{X}\left(  x\right)  \geq c.
\]
\vspace*{0.1in}

\emph{Step 1: Computing }$G_{X}$. To use formula (\ref{GX}), we must compute
$DX$. One may always assume that, with $\mathcal{H}$ the canonical Hilbert
space of the isonormal Gaussian process $W$ underlying $B$, for $s\in\lbrack
a,b]$, there exists an element $f_{s}\in\mathcal{H}$ such that $B\left(
s\right)  =W\left(  f_{s}\right)  $. Note that $\left\langle f_{s}%
,f_{t}\right\rangle =\sigma^{2}\left(  s,t\right)  :=\mathrm{E}\left[
B\left(  s\right)  B\left(  t\right)  \right]  $. Then, by using the same
argument as in the proof of \cite[Lemma 3.11]{Nourdin:09}, we find that on the
event $\left\{  X>-\mu\right\}  $,%
\[
DX=\mathbf{1}_{B\left(  \tau\right)  >z}f_{\tau},
\]
where $\tau=\arg\min_{s\in\lbrack a,b]}\left(  B\left(  s\right)  -z\right)
=\arg\min_{s\in\lbrack a,b]}B\left(  s\right)  $. Note that since $B$ is
continuous, this $\arg\min$ is uniquely defined in the event $\left\{
X>-\mu\right\}  $. Thus by the Mehler-type representation formula (\ref{GX}),%
\begin{align}
G_{X}  &  =\int_{0}^{\infty}du~e^{-u}\mathrm{E}\left[  \left.  \left\langle
\mathbf{1}_{B\left(  \tau\right)  >z}~f_{\tau};\mathbf{1}_{B^{\left(
u\right)  }\left(  \tau^{\left(  u\right)  }\right)  >z}~f_{\tau^{\left(
u\right)  }}\right\rangle \right\vert \mathcal{F}\right] \nonumber\\
&  =\int_{0}^{\infty}du~e^{-u}\mathrm{\tilde{E}}\left[  \mathbf{1}_{B\left(
\tau\right)  >z}\mathbf{1}_{B^{\left(  u\right)  }\left(  \tau^{\left(
u\right)  }\right)  >z}\left\langle f_{\tau};f_{\tau^{\left(  u\right)  }%
}\right\rangle \right] \nonumber\\
&  =\int_{0}^{\infty}du~e^{-u}\mathrm{\tilde{E}}\left[  \mathbf{1}_{B\left(
\tau\right)  >z}\mathbf{1}_{B^{\left(  u\right)  }\left(  \tau^{\left(
u\right)  }\right)  >z}\sigma^{2}\left(  \tau,\tau^{\left(  u\right)
}\right)  \right]  , \label{GXcomput}%
\end{align}
where $\mathrm{\tilde{E}}$ represents the expectation with respect to the
randomness in the independent copy $\tilde{B}$ of $B$, and the superscripts
$^{\left(  u\right)  }$ mean that the corresponding random variables are
relative to $B^{\left(  u\right)  }=e^{-u}B+\sqrt{1-e^{-2u}}\tilde{B}$%
.\vspace*{0.1in}

\emph{Step 2: Estimating }$g_{X}$. We must compute $\mathrm{E}\left[
G_{X}~|~X=x\right]  $ for any $x>-\mu$. Here, a convenient simplification
occurs: since we are conditioning by $\left\{  X=x\right\}  $, on this event,
$\inf_{s\in\lbrack a,b]}\left(  B\left(  s\right)  -z\right)  _{+}$ is
strictly positive, and in fact it equals $B_{\tau}-z$; therefore,
$\mathbf{1}_{B_{\tau}>z}=1$ almost surely on that event. Therefore, for any
$x>-\mu$,%
\[
g_{X}\left(  x\right)  =\int_{0}^{\infty}du~e^{-u}\mathrm{\tilde{E}E}\left[
\mathbf{1}_{B^{\left(  u\right)  }\left(  \tau^{\left(  u\right)  }\right)
>z}\sigma^{2}\left(  \tau,\tau^{\left(  u\right)  }\right)  ~|~X=x\right]  .
\]
The goal being to bound this expression uniformly from below, we note that
both $\tau$ and $\tau^{\left(  u\right)  }$ are in the non-random interval
$[a,b]$. Since $B$ is a.s. continuous, the bivariate function $\sigma^{2}$ is
uniformly continuous on $[a,b]\times\lbrack a,b]$. Since $a>0$, $\sigma
^{2}\left(  a,a\right)  =\gamma^{2}\left(  a\right)  >0$, and by making $b-a$
small enough, we can get $\min_{\left(  s,t\right)  \in\lbrack a,b]^{2}}%
\sigma^{2}\left(  s,t\right)  \geq\gamma^{2}\left(  a\right)  /2$. Thus%
\[
g_{X}\left(  x\right)  \geq\frac{\gamma^{2}\left(  a\right)  }{2}\int
_{0}^{\infty}du~e^{-u}\mathrm{\tilde{E}E}\left[  \mathbf{1}_{B^{\left(
u\right)  }\left(  \tau^{\left(  u\right)  }\right)  >z}~|~X=x\right]  .
\]
\vspace*{0.1in}

\emph{Step 3: Estimating the last expectation}. We now evaluate the remaining
expectation above. We have for any $x>-\mu$, and any $z\in\mathbb{R}$,%
\begin{align*}
&  \mathrm{\tilde{E}E}\left[  \mathbf{1}_{B^{\left(  u\right)  }\left(
\tau^{\left(  u\right)  }\right)  >z}~|~X=x\right] \\
&  =\P \tilde{\P }\left[  e^{-u}B\left(  \tau^{\left(  u\right)  }\right)
+\sqrt{1-e^{-2u}}\tilde{B}\left(  \tau^{\left(  u\right)  }\right)
>z~|~X=x\right]  .
\end{align*}
In the conditional probability above, since $x>-\mu$ and since $\tau=\arg
\min_{\left[  a,b\right]  }B$, we get $B\left(  \tau^{\left(  u\right)
}\right)  \geq B\left(  \tau\right)  =x+\mu+z$. Similarly, $\tilde{B}\left(
\tau^{\left(  u\right)  }\right)  \geq\tilde{B}\left(  \tilde{\tau}\right)  $
where $\tilde{\tau}:=\arg\min_{[a,b]}\tilde{B}$. In addition, we have that
$\tilde{B}\left(  \tilde{\tau}\right)  =\min_{[a,b]}\tilde{B}$ is independent
of $X$. Therefore we can write%
\begin{align*}
&  \mathrm{\tilde{E}E}\left[  \mathbf{1}_{B^{\left(  u\right)  }\left(
\tau^{\left(  u\right)  }\right)  >z}~|~X=x\right] \\
&  \geq\P \tilde{\P }\left[  e^{-u}\left(  x+\mu+z\right)  +\sqrt{1-e^{-2u}%
}\tilde{B}\left(  \tilde{\tau}\right)  >z~|~X=x\right] \\
&  =\tilde{\P }\left[  \min_{[a,b]}\tilde{B}>\frac{z\left(  1-e^{-u}\right)
-e^{-u}\left(  x+\mu\right)  }{\sqrt{1-e^{-2u}}}\right]  .
\end{align*}
Recall that we are trying to find a uniform lower bound on $g_{X}(x)$ for all
$x+\mu>0$; therefore, the term $-e^{-u}\left(  x+\mu\right)  $ in the
probability above will not help us even though it is negative, so we simply
ignore it, obtaining%
\[
\mathrm{\tilde{E}E}\left[  \mathbf{1}_{B^{\left(  u\right)  }\left(
\tau^{\left(  u\right)  }\right)  >z}~|~X=x\right]  \geq\tilde{\P }\left[
\min_{[a,b]}\tilde{B}>z\sqrt{\frac{1-e^{-u}}{1+e^{-u}}}\right]  =\tilde
{\P }\left[  \min_{[a,b]}\tilde{B}>z\sqrt{\tanh\left(  u/2\right)  }\right]
.
\]
\vspace*{0.1in}

\emph{Step 4: General tail lower bound for Gaussian infimum.} We are left to
find a lower bound on the last expression above. Since $z$ is arbitrary, this
is a general question about Gaussian infima, and can be solved using a
strategy similar to the one we are using for this entire proof of Proposition
\ref{sma}, albeit easier because we now have reduced the problem to studying
the tail of the random variable $\inf\tilde{B}$, which does not involved
positive parts, making the required Malliavin calculus computations more
straightforward. We state this result as a general Lemma of independent
interest, even though in the remainder of the proof of our Proposition
\ref{sma}, only the second statement of this Lemma, which is essentially
trivial, is needed.

\begin{lemma}
\label{Gaussinf} Let $B$ be a continuous scalar center Gaussian process on
$[a,b]$ satisfying \textnormal{(\ref{e1})} and \textnormal{(\ref{e1v})}. Assume $b-a$
is small enough to ensure that for all $s,t\in\lbrack a,b]$, $\mathrm{E}%
\left[  B\left(  s\right)  B\left(  t\right)  \right]  \geq\gamma^{2}\left(
a\right)  /2$. Define
\begin{align*}
\nu &  :=-\mathrm{E} \inf_{[a,b]}B=\left\vert \mathrm{E} \inf_{[a,b]}%
B\right\vert =\mathrm{E} \sup_{[a,b]}B,\\
\lambda &  :=\mathrm{E} \left[  \left\vert \inf_{[a,b]}B+\nu\right\vert
\right]  =\mathrm{E} \left[  \left\vert \sup_{[a,b]}B-\nu\right\vert \right]
.
\end{align*}
Then for any $y\geq\gamma\left(  b\right)  -\nu$,
\begin{align*}
&  \P \left[  \inf_{[a,b]}B>y\right] \\
&  =\P \left[  \sup B<-y\right]  \geq\lambda\frac{1-e^{-1}}{4}\frac{1}{y+\nu
}\exp\left(  -\frac{\left(  y+\nu\right)  ^{2}}{\gamma^{2}\left(  a\right)
}\right)  .
\end{align*}
Also note that for all $y\leq\gamma\left(  b\right)  $,
\begin{equation}
\P \left[  \inf_{[a,b]}B>y\right]  \geq\P \left[  \inf_{[a,b]}B>\gamma\left(
b\right)  \right]  =:\kappa>0. \label{kappadef}%
\end{equation}

\end{lemma}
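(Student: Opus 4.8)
\emph{Overall approach.} Since $B$ and $-B$ have the same law, $\P[\inf_{[a,b]}B>y]=\P[\sup_{[a,b]}B<-y]$, so it suffices to treat the infimum. I would realise this tail as the upper tail of the centered variable
\[
X:=\inf_{[a,b]}B+\nu ,\qquad \tau:=\arg\min_{[a,b]}B ,
\]
and then read the bound off the explicit density of Proposition~\ref{dens}; note $\mathrm{E}[X]=0$ and $\mathrm{E}[|X|]=\lambda$ by the definitions of $\nu$ and $\lambda$. As in the proof of Proposition~\ref{sma}, $X$ is a Lipschitz functional of a continuous Gaussian process, hence $X\in\mathbb{D}^{1,2}$; the argmin $\tau$ is a.s.\ a single point; and the same computation leading to~(\ref{GXcomput}), now with neither positive part nor indicator, gives $DX=f_\tau$ and, via the Mehler-type formula~(\ref{GX}),
\[
G_X=\int_0^{\infty}du\,e^{-u}\,\mathrm{\tilde{E}}\!\left[\sigma^2\!\left(\tau,\tau^{(u)}\right)\right].
\]
Because $\tau,\tau^{(u)}\in[a,b]$, the smallness hypothesis on $b-a$ forces $\sigma^2(\tau,\tau^{(u)})\ge\gamma^2(a)/2$, and Cauchy--Schwarz together with monotonicity of $\gamma$ forces $\sigma^2(\tau,\tau^{(u)})\le\gamma(\tau)\gamma(\tau^{(u)})\le\gamma^2(b)$; as $\int_0^\infty e^{-u}\,du=1$, this yields the \emph{deterministic} two-sided bound $\tfrac{1}{2}\gamma^2(a)\le G_X\le\gamma^2(b)$, hence $\tfrac{1}{2}\gamma^2(a)\le g_X(x)\le\gamma^2(b)$ for a.e.\ $x$.

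\emph{Applying the density formula.} The support of $X$ is an interval (a continuous image of the Gaussian law of $B$) containing $0$ in its interior ($X$ is centered and non-degenerate), and $g_X\ge\gamma^2(a)/2>0$ everywhere, so Proposition~\ref{dens} will apply once we establish that the right endpoint of the support is $\beta=+\infty$; this is genuinely needed, both to integrate the density up to $2w$ below and because the asserted bound is strictly positive for every $y$. For this I would regress $B$ on $B(a)$ exactly as in Step~2 of the proof of Proposition~\ref{hitd1}: $B(t)=\frac{\sigma^2(t,a)}{\gamma^2(a)}B(a)+R(t)$ with $R$ independent of $B(a)$ and, by the hypothesis on $b-a$, $\frac{\sigma^2(t,a)}{\gamma^2(a)}\ge\tfrac{1}{2}$ on $[a,b]$. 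On $\{B(a)>m\}\cap\{\inf_{[a,b]}R>K-\tfrac{m}{2}\}$, which has positive probability for $m$ large by independence, one has $\inf_{[a,b]}B\ge\tfrac{1}{2}B(a)+\inf_{[a,b]}R>K$; hence $\P[\inf_{[a,b]}B>K]>0$ for every $K$, i.e.\ $\beta=+\infty$. Proposition~\ref{dens} then gives, for a.e.\ $x\ge0$,
\[
\rho(x)=\frac{\lambda}{2g_X(x)}\exp\!\left(-\Psi(x)\right),\qquad \Psi(x):=\int_0^{x}\frac{y\,dy}{g_X(y)}.
\]

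\emph{Extracting the tail estimate.} Fix $y\ge\gamma(b)-\nu$ and set $w:=y+\nu\ge\gamma(b)>0$, so $\P[\inf_{[a,b]}B>y]=\P[X>w]=\int_w^\infty\rho$. Using $\Psi'(x)=x/g_X(x)$, rewrite $\rho(x)=-\frac{\lambda}{2x}\frac{d}{dx}e^{-\Psi(x)}$ and integrate over $[w,2w]$ only, where $\tfrac{1}{2x}\ge\tfrac{1}{4w}$:
\[
\P[X>w]\ \ge\ \int_w^{2w}\rho(x)\,dx\ \ge\ \frac{\lambda}{4w}\bigl(e^{-\Psi(w)}-e^{-\Psi(2w)}\bigr)\ =\ \frac{\lambda}{4w}\,e^{-\Psi(w)}\bigl(1-e^{-(\Psi(2w)-\Psi(w))}\bigr).
\]
Now $g_X\ge\gamma^2(a)/2$ gives $\Psi(w)\le w^2/\gamma^2(a)$, hence $e^{-\Psi(w)}\ge e^{-w^2/\gamma^2(a)}$; and $g_X\le\gamma^2(b)$ with $w\ge\gamma(b)$ gives $\Psi(2w)-\Psi(w)=\int_w^{2w}y\,dy/g_X(y)\ge\frac{3w^2}{2\gamma^2(b)}\ge\tfrac{3}{2}>1$, hence $1-e^{-(\Psi(2w)-\Psi(w))}\ge1-e^{-1}$. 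Substituting $w=y+\nu$ gives exactly the claimed inequality. The second statement is immediate: $\{\inf_{[a,b]}B>\gamma(b)\}\subseteq\{\inf_{[a,b]}B>y\}$ when $y\le\gamma(b)$, and $\kappa=\P[\inf_{[a,b]}B>\gamma(b)]>0$ follows from the first statement with $y=\gamma(b)$ (legitimate since $\nu\ge0$) together with $\lambda=\mathrm{E}[|X|]>0$.

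\emph{Where the difficulty lies.} The bookkeeping with the two-sided bound on $g_X$ is routine; the two ideas that carry the proof are the Malliavin identity $DX=f_\tau$ with a.s.\ uniqueness of the argmin (imported from the proof of Proposition~\ref{sma}, ultimately from \cite{Nourdin:09}), and the device of integrating $\rho$ only over $[w,2w]$ after writing it as a total derivative, which is precisely what manufactures the clean $1/(y+\nu)$ prefactor and the universal constant $1-e^{-1}$. Verifying $\beta=+\infty$ is a small but essential step I would not skip.
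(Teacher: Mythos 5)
Your proof is correct and follows essentially the route the paper sketches and leaves to the reader: the deterministic two-sided bound $\gamma^{2}(a)/2\leq G_{X}\leq\gamma^{2}(b)$ obtained from the Mehler-type computation of Step 2 of Proposition \ref{sma}, then Proposition \ref{dens}, then a tail extraction in the spirit of \cite[Corollary 4.5]{Viens:09}; your device of writing $\rho(x)=-\frac{\lambda}{2x}\frac{d}{dx}e^{-\Psi(x)}$ and integrating only over $[w,2w]$ with $w=y+\nu\geq\gamma(b)$ reproduces exactly the stated constant $\frac{1-e^{-1}}{4}$ and the exponent $-(y+\nu)^{2}/\gamma^{2}(a)$. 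The extra detail you supply — the regression on $B(a)$ showing the support of $\inf_{[a,b]}B$ is unbounded above, so the density formula can be integrated over $[w,2w]$ for every $w$ — is a correct and worthwhile verification, consistent with the paper's remark that the infimum has a positive density on all of $\mathbb{R}$.
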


Note that the positive constants $\kappa,\lambda,\nu,\gamma\left(  a\right)
,\gamma\left(  b\right)  $ depend only on $a,b$ and the law of $B$. The last
statement follows trivially from the fact that $\inf_{[a,b]}B$ has a positive
density on $\mathbb{R}$. That fact comes easily from Proposition \ref{dens}
and the fact that $\gamma^{2}\left(  a\right)  /2\leq G_{\inf_{[a,b]}B}%
\leq\gamma^{2}\left(  b\right)  $ almost surely, which is easy to prove using
the technique in Step 2. These inequalities are also useful to prove the first
statement of the lemma, via a modification of the proof of \cite[Corollary
4.5]{Viens:09}. The full proof of this lemma is left to the reader.\vspace
*{0.1in}

\emph{Step 5: Applying the lemma.}

We apply Lemma \ref{Gaussinf} to $\tilde{B}$, with $y=z\sqrt{\tanh\left(
u/2\right)  }$. Since $B$ and $\tilde{B}$ have the same law, all the
constants, particularly $\kappa$ in (\ref{kappadef}), are as in Lemma
\ref{Gaussinf}. Since $\tanh\leq1$ on $\mathbb{R}_{+}$, from the second part
of the lemma and the conclusion of Step 3 we get for every $z\leq\gamma\left(
b\right)  $, every $x>-\mu$, and every $u\geq0$,%
\begin{equation}
\mathrm{\tilde{E}E}\left[  \mathbf{1}_{B^{\left(  u\right)  }\left(
\tau^{\left(  u\right)  }\right)  >z}~|~X=x\right]  \geq\kappa.
\label{finalEE}%
\end{equation}
For $z\geq\gamma\left(  b\right)  $, we can still apply the second part of
Lemma \ref{Gaussinf} for $u$ small enough. Since $\tanh$ is bijective and
increasing on $\mathbb{R}_{+}$, we get $y=z\sqrt{\tanh\left(  u/2\right)
}\leq\gamma\left(  b\right)  $ if and only if $u\leq2\mathrm{arc}%
$\textrm{tanh}$\left(  \gamma^{2}\left(  b\right)  /z^{2}\right)  $. For such
a $u$ and $z$, and every $x>-\mu$, by the conclusion of Step 3, inequality
(\ref{finalEE}) still holds.\vspace*{0.1in}

\emph{Step 6. Conclusion. }

By the conclusion of Step 2, from (\ref{finalEE}), we can now write, for all
$z\leq\gamma\left(  b\right)  $ and all $x>-\mu$,%
\[
g_{X}\left(  x\right)  \geq\frac{\gamma^{2}\left(  a\right)  }{2}\kappa.
\]
By the conclusion of Step 2, we can find a lower bound on $g_{X}\left(
x\right)  $ by integrating only over the range $u\in\lbrack0,2\mathrm{arc}%
$\textrm{tanh}$\left(  \gamma^{2}\left(  b\right)  /z^{2}\right)  ]$, where
(\ref{finalEE}) is still valid: we get, for all $z\geq\gamma\left(  b\right)
$ and every $x>-\mu$,
\begin{align*}
g_{X}\left(  x\right)   &  \geq\frac{\gamma^{2}\left(  a\right)  }{2}%
\kappa\int_{0}^{2\mathrm{arctanh}\left(  \gamma^{2}\left(  b\right)
/z^{2}\right)  }du~e^{-u}\\
&  =\frac{\gamma^{2}\left(  a\right)  }{2}\kappa\left(
1-e^{-2\mathrm{arctanh}\left(  \gamma^{2}\left(  b\right)  /z^{2}\right)
}\right)  .
\end{align*}
With $F\left(  z\right)  $ as defined in the statement of Proposition
\ref{sma}, we summarize the two inequalities above as: for all $x>-\mu$ and
all $z\in\mathbb{R}$%
\[
g_{X}\left(  x\right)  \geq F\left(  z\right)  \frac{\gamma^{2}\left(
a\right)  \kappa}{2}.
\]

Thus by relations (\ref{densUB}) and (\ref{Deedee}), since either $z$ or $-z$
is positive,
\[
\P \left(  D_{1}\right)  \leq\varepsilon\frac{2\kappa\mathrm{E}\left[
\left\vert X\right\vert \right]  }{\gamma^{2}\left(  a\right)  }\left(
1+\frac{1}{F\left(  \left\vert z\right\vert \right)  }\right)  .
\]
Finally, we can easily estimate $\mathrm{E}\left[  \left\vert X\right\vert
\right]  $, and show that this does not depend on $z$. Indeed, from
(\ref{GXcomput})\ and Cauchy-Schwartz, we immediately get $G_{X}\leq$
$\gamma^{2}\left(  b\right)  $. Then, from (\ref{NP}), with $f=$ the identity,
we get $\mathrm{E}\left[  X^{2}\right]  =\mathrm{E}\left[  G_{X}\right]
\leq\gamma^{2}\left(  b\right)  $. Hence by Jensen, $\mathrm{E}\left[
\left\vert X\right\vert \right]  \leq\gamma\left(  b\right)  $. Plugging this
into the last estimate of $\P \left(  D_{1}\right)  $, and combining this with
the estimate of $\P \left(  D_{2}\right)  $ from Theorem \ref{hitd1}, finishes
the proof of the proposition's first statement. The second statement follows
immediately from Corollary \ref{hitd1cor}.
\end{proof}

Using a covering argument and Proposition \ref{sma} one obtains the following
upper bound for the hitting probabilities of $B$ in terms of Hausdorff measure
(see \cite[Theorem 3.1]{Dalang:07} where a similar argument is performed).

\begin{thm}
\label{t3} Assume that the function $\varphi(s)=s^{d}/\gamma^{-1}(s)$ is
right-continuous and non-decreasing near $0$ with $\lim_{0+}\varphi=0$. Also
assume that $\gamma$ satisfies the condition \textnormal{(\ref{gammamult})} from
Corollary \ref{hitd1cor}. Then for all $0<a<b<\infty$, any $M>0$, there exists
a constant $C>0$ depending only on $a,b$, the law of $B$, and $M$, such that
for any Borel set $A\subset\lbrack-M,M]^{d}$,%
\[
\P ({B}([a,b])\cap A\neq\varnothing)\leq C{\mathcal{H}}_{\varphi}(A).
\]

\end{thm}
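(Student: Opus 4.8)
The plan is to run the classical covering argument for Hausdorff-measure upper bounds, using Proposition \ref{sma} as the single-ball estimate. First I would fix $0<a<b<\infty$ and $M>0$, take a Borel set $A\subset[-M,M]^d$, and assume ${\mathcal H}_\varphi(A)<\infty$ (otherwise there is nothing to prove). Given $\eta>0$ small, pick a covering of $A$ by balls $B(z_i,r_i)$ with $r_i\le\eta$ and $\sum_i\varphi(2r_i)\le{\mathcal H}_\varphi(A)+1$; by shrinking $b-a$ first (as allowed: all constants depend only on $a,b$, the law of $B$, and $M$) we may assume $b-a$ is small enough for Proposition \ref{sma} to apply. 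The event $\{B([a,b])\cap A\ne\varnothing\}$ is contained in $\bigcup_i\{B([a,b])\cap B(z_i,r_i)\ne\varnothing\}$, so by subadditivity
\[
\P(B([a,b])\cap A\ne\varnothing)\le\sum_i\P\big(B([a,b])\cap B(z_i,r_i)\ne\varnothing\big).
\]
Now apply the second (cleaner) form of Proposition \ref{sma}, which under condition (\ref{gammamult}) gives, with $L$ from Corollary \ref{hitd1cor},
\[
\P\big(B([a,b])\cap B(z_i,r_i)\ne\varnothing\big)\le\Big(r_i\,\frac{2\kappa\gamma(b)}{\gamma^2(a)}\big(1+\tfrac{1}{F(|z_i|)}\big)+\frac{L\sqrt\ell}{\gamma(a)}\gamma(b-a)\Big)^d.
\]

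The key point is to compare the right-hand side with $\varphi(2r_i)=(2r_i)^d/\gamma^{-1}(2r_i)$. Since $z_i$ may be taken in $[-M,M]^d$ (balls meeting $A$ can be recentered, enlarging radii by a bounded factor, or one simply notes the covering may be assumed to consist of balls centered in a fixed neighborhood of $[-M,M]^d$), we have $|z_i|\le M'$ for a constant depending only on $M$, hence $1/F(|z_i|)$ is bounded by a constant depending only on $M,a,b$ and the law of $B$; call the resulting coefficient of $r_i$ simply $C_1$. The subtle part is handling the additive term $\frac{L\sqrt\ell}{\gamma(a)}\gamma(b-a)$: this is a constant, not something going to zero with $r_i$, so naively the sum is unbounded. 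The standard fix is to note that the covering can be refined so that $r_i$ is comparable to the diameter of $A\cap B(z_i,r_i)$ and, more importantly, to observe that $B([a,b])$ a.s. lies in a bounded region $[-K,K]^d$ (with $K$ random but controllable, or one truncates), so only $O(1)$ of the covering balls are relevant — no, cleaner: one replaces the estimate by choosing, for the purpose of the covering, radii with $r_i\ge \gamma(b-a)$ replaced by the single ball estimate, and for $r_i<$ some threshold uses a scaling trick. Actually the correct and intended route is the one in \cite[Theorem 3.1]{Dalang:07}: cover $[a,b]$ by finitely many subintervals $[a_k,b_k]$ of small length $\delta$, apply the single-ball bound on each with $\gamma(b_k-a_k)=\gamma(\delta)$ chosen so that $\gamma(\delta)$ is dominated by $r_i/\gamma^{-1}(r_i)^{1/?}$ — i.e. one exploits that $\varphi(2r_i)=(2r_i)^d/\gamma^{-1}(2r_i)$ and that $r_i^d\le C(2r_i)^d/\gamma^{-1}(2r_i)\cdot\gamma^{-1}(2r_i)$, reorganizing the bound on each subinterval so the additive constant term multiplies a factor $\gamma^{-1}(r_i)$ worth of disjoint time-subintervals.

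Concretely, the step I expect to be the main obstacle is exactly this bookkeeping: on a time interval of length $\ell_i:=\gamma^{-1}(r_i)$, Proposition \ref{sma} gives $\P(B([a,a+\ell_i])\cap B(z_i,r_i)\ne\varnothing)\le\big(C_1 r_i+C_2\gamma(\ell_i)\big)^d=(C_1r_i+C_2r_i)^d=C_3 r_i^d$, since by definition $\gamma(\ell_i)=\gamma(\gamma^{-1}(r_i))=r_i$; then covering $[a,b]$ by $\lceil(b-a)/\ell_i\rceil\le 1+(b-a)/\gamma^{-1}(r_i)$ such subintervals and summing yields
\[
\P\big(B([a,b])\cap B(z_i,r_i)\ne\varnothing\big)\le\Big(1+\frac{b-a}{\gamma^{-1}(r_i)}\Big)C_3 r_i^d\le C_4\,\frac{r_i^d}{\gamma^{-1}(r_i)}=C_4'\,\varphi(2r_i),
\]
where the last step uses $\lim_{0+}\gamma^{-1}=0$ so $1/\gamma^{-1}(r_i)$ dominates $1$ for small $r_i$, i.e. $r_i\le\eta$ with $\eta$ fixed at the outset. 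Summing over $i$ gives $\P(B([a,b])\cap A\ne\varnothing)\le C_4'\big({\mathcal H}_\varphi(A)+1\big)$; but to remove the additive $+1$ one runs the argument with $\sum_i\varphi(2r_i)\le{\mathcal H}_\varphi(A)+\epsilon$ only when ${\mathcal H}_\varphi(A)>0$, while the case ${\mathcal H}_\varphi(A)=0$ is handled by noting that then $A$ can be covered with $\sum\varphi(2r_i)$ arbitrarily small, forcing $\P(B([a,b])\cap A\ne\varnothing)=0$. Letting $\eta\downarrow0$ (hence $\epsilon\downarrow0$) and collecting constants — all depending only on $a,b$, the law of $B$, and $M$ through $\kappa,\lambda,\nu,\gamma(a),\gamma(b),L,\ell$ and $M'$ — yields the stated inequality with $C=C_4'$. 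The one technical care needed throughout is that $\gamma^{-1}(r_i)$ must be $\le b-a$ for the subinterval construction, which holds once $r_i\le\gamma(b-a)$, absorbed into the choice of $\eta$; and that the monotonicity/right-continuity hypothesis on $\varphi$ is what makes $\sum_i\varphi(2r_i)$ a legitimate approximation to ${\mathcal H}_\varphi(A)$.
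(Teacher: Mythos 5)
Your final argument is correct and is essentially the paper's own proof (following \cite[Theorem 3.1]{Dalang:07}): subdivide $[a,b]$ into roughly $(b-a)/\gamma^{-1}(r_i)$ intervals of length $\gamma^{-1}(r_i)$ so that the additive term $\gamma(\text{length})$ from Proposition \ref{sma} becomes comparable to $r_i$, yielding the single-ball bound $C\,r_i^{d}/\gamma^{-1}(r_i)=C\varphi(r_i)\le C\varphi(2r_i)$ by monotonicity of $\varphi$, then sum over a near-optimal cover and let the mesh tend to $0$. The false starts in the middle of your write-up (in particular the suggestion that one may simply ``shrink $b-a$'', which would change the event being estimated) are harmless, since your eventual time-subdivision at scale $\gamma^{-1}(r_i)$ simultaneously handles the smallness requirement of Proposition \ref{sma} and the constant term, exactly as in the paper.
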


\begin{proof}
For all positive integers $n$, consider the intervals of the form
\[
I_{j}^{n}:=[t_{j}^{n},t_{j+1}^{n}],\qquad\text{ where }\qquad t_{j}%
^{n}:=j\gamma^{-1}(2^{-n}).
\]
Fix $\varepsilon\in\,(0\,,1)$ and $n\in\mathbb{N}$ such that $2^{-n-1}%
<\varepsilon\leq2^{-n}$, and write, for any $z\in A$,%
\[
\P \left(  B([a,b])\cap B(z,\varepsilon)\neq\varnothing\right)  \leq
\sum_{j:I_{j}^{n}\cap\lbrack a,b]\neq\varnothing}\P (B^{\gamma}(I_{j}^{n})\cap
B(z\,,\varepsilon)\neq\varnothing).
\]
The number of $j$'s involved in the sum is at most $(b-a)/\gamma^{-1}(2^{-n}%
)$. Also note that the diameter $\eta$ of $I_{j}^{n}$ is $\gamma^{-1}\left(
2^{-n}\right)  $, and therefore, $\gamma\left(  \eta\right)  =2^{-n}%
<2\varepsilon$. Then, for $\varepsilon\leq\varepsilon_{0}$ small enough, we
can apply Proposition \ref{sma} to each interval $I_{j}^{n}$. The constant
$\kappa$ in this proposition must then be replaced by $\kappa_{j}:=\P \left[
\inf_{I_{j}^{n}}B>\gamma\left(  t_{j}^{n}\right)  \right]  $. In any case, we
may use the uniform bound $\kappa_{j}\leq1$. Hence, Proposition \ref{sma}
implies that for all large $n$ and $z\in\mathbb{R}^{d}$,%
\begin{equation}%
\begin{split}
&  \P \left(  {B}\left(  [a,b]\right)  \cap B(z\,,\varepsilon)\neq
\varnothing\right) \\
&  \leq\frac{(b-a)}{\gamma^{-1}\left(  2^{-n}\right)  }\cdot\left(
\varepsilon\frac{2\gamma\left(  b\right)  }{\gamma^{2}\left(  a\right)
}\left(  1+\frac{1}{F\left(  M\right)  }\right)  +\frac{L\sqrt{\ell}}%
{\gamma\left(  a\right)  }2\varepsilon\right)  ^{d}\\
&  \leq\frac{\varepsilon^{d}}{\gamma^{-1}\left(  \varepsilon\right)  }\left(
b-a\right)  \left(  \frac{2\gamma\left(  b\right)  }{\gamma^{2}\left(
a\right)  }\left(  1+\frac{1}{F\left(  M\right)  }\right)  +\frac{2L\sqrt
{\ell}}{\gamma\left(  a\right)  }\right)  ^{d}\\
&  =:C\varphi(\varepsilon).
\end{split}
\label{eq:hit:ball:UB}%
\end{equation}
In the first inequality we used the fact that the endpoints of each interval
$I_{j}^{n}$ are bounded above by $b$ and below by $a$, and we appealed to the
fact that $\gamma$ is increasing; in the last inequality we used again the
fact that $\gamma$ is increasing, and $\varepsilon\leq2^{-n}$. Observe that
\[
C:=\left(  b-a\right)  \left(  \frac{2\gamma\left(  b\right)  }{\gamma
^{2}\left(  a\right)  }\left(  1+\frac{1}{F\left(  M\right)  }\right)
+\frac{2L\sqrt{\ell}}{\gamma\left(  a\right)  }\right)  ^{d}%
\]
does not depend on $n\,,\varepsilon$, or $A$, except via the value $M$.
Therefore, (\ref{eq:hit:ball:UB}) is valid for all $\varepsilon\in
\,(0\,,\varepsilon_{0})$.

Now we use a covering argument: Choose $\varepsilon\in\,(0,\varepsilon_{0})$
and let $\{B(z_{i},r_{i})\}_{i=1}^{\infty}$ be a sequence of open balls in
$\mathbb{R}^{d}$ with radii $r_{i}\in\,(0\,,\varepsilon]$ such that
\begin{equation}
A\subseteq\bigcup_{i=1}^{\infty}B(z_{i},r_{i})\quad\text{and}\quad\sum
_{i=1}^{\infty}\varphi(2r_{i})\leq\mathcal{H}_{\varphi}(A)+\varepsilon,
\label{eq:HHHH}%
\end{equation}
where $\varphi(r)=r^{d}/\gamma^{-1}(r)$. Then, \eqref{eq:hit:ball:UB} and
\eqref{eq:HHHH} together imply that
\[%
\begin{split}
\P \left(  {B}([a,b])\cap A\neq\varnothing\right)   &  \leq\sum_{i=1}^{\infty
}\P ({B}([a,b])\cap B(z_{i},r_{i})\neq\varnothing)\\
&  \leq C\sum_{i=1}^{\infty}\varphi(2r_{i})\\
&  \leq C(\mathcal{H}_{\varphi}(A)+\varepsilon).
\end{split}
\]
Finally, let $\varepsilon\rightarrow0^{+}$ to deduce the desired upper bound.
\end{proof}

In Section 5 we will see what our main theorems \ref{tcap} and \ref{t3} mean
in a specific two-parameter class of examples with highly non-stationary
increments. We finish this section with a general discussion of how close our
canonical kernel functions $\mathrm{K}$ and $1/\varphi$ are to each other, as
identified in the Hausdorff measure upper bound (Theorem \ref{t3}) and the
capacity lower bound (Theorem \ref{tcap}). To fix ideas, recall that in the
case of fBm, the relevant function for the Hausdorff measure is $\varphi
\left(  r\right)  =r^{d-1/H}$, and that the Hausdorff measure result applies
when $d>1/H$. In that same case, the capacity lower bound uses the Newtonian
kernel $\mathrm{K}=\mathrm{K}_{d-1/H}$, meaning that $\mathrm{K}=1/\varphi$,
or at least, since all results are given modulo multiplicative constants
(depending on $a,b$ and the law of $B$), and the values obtained in the bounds
depend qualitatively only on the behavior of $\mathrm{K}$ and $1/\varphi$ near
$0$, the fBm case shows that what is important is that lower and upper bounds
refer to commensurate canonical functions $\mathrm{K}$ and $1/\varphi$ near
$0$.

In the general case, we would like to know to what extent we still have that
$\mathrm{K}$ and $1/\varphi$ are commensurate near $0$. Recall that%
\[
\mathrm{K}\left(  x\right)  =\max\left(  1,v\left(  \gamma^{-1}\left(
x\right)  \right)  \right)  ,
\]
where $v\left(  r\right)  =\int_{r}^{b-a}ds/\gamma^{d}\left(  s\right)  $, and%
\[
1/\varphi\left(  x\right)  =\gamma^{-1}\left(  x\right)  /x^{d}.
\]
Thus, their commensurability near $0$ is equivalent to that of $v\left(
r\right)  $ and $r\mapsto r/\gamma^{d}\left(  r\right)  $ for $r$ near $0$.
Since all these functions are continuous and non-zero everywhere except at
$r=0$, we only need to investigate whether%
\[
0<\liminf_{r\rightarrow0}\frac{r/\gamma^{d}\left(  r\right)  }{v\left(
r\right)  }\leq\limsup_{r\rightarrow0}\frac{r/\gamma^{d}\left(  r\right)
}{v\left(  r\right)  }<+\infty.
\]
This comparison is only fair if one also takes into account the assumptions
used in Theorems \ref{t3} and \ref{tcap}, which include the concavity of
$\gamma$, and the fact that $\lim_{0}\varphi=0$ and $\varphi$ is
non-decreasing. To make this presentation more elementary, we specialize to
the case where $\gamma$ is differentiable, but similar arguments can be
developed in the case where differentiability holds only almost everywhere.

Since we now assume that $\lim_{0}\varphi=0$ and $\varphi$ is non-decreasing,
there exists an increasing sequence of constant $c_{n}$ with $\lim_{n}%
c_{n}=+\infty$, and a decreasing sequence of constants $r_{n}\in(0,b-a]$ in
with $\lim_{n}r_{n}=0$ such that for every $r\in\lbrack r_{n+1},r_{n}]$,
$1/\gamma^{d}\left(  r\right)  \geq c_{n}/r$. Therefore, for any integer $N$,
\begin{align*}
v\left(  r_{N}\right)   &  =\int_{r_{N}}^{b-a}\frac{ds}{\gamma^{d}\left(
s\right)  }\geq\sum_{n=1}^{N-1} c_{n}\left(  \ln r_{n}-\ln r_{n+1}\right) \\
&  \geq c_{1} \left(  \ln\left(  \frac{1}{r_{N}}\right)  -\ln\left(  \frac
{1}{b-a}\right)  \right)  .
\end{align*}
Since this expression goes to $+\infty$ with $N$, $v$ is unbounded. We may
thus apply l'H\^{o}pital's rule similarly to what we did in Step 4 of the
proof of Theorem \ref{tcap}, to get that
\[
\lim_{r\rightarrow0}\frac{r/\gamma^{d}\left(  r\right)  }{v\left(  r\right)
}=-1+d\lim_{r\rightarrow0}\frac{r\gamma^{\prime}\left(  r\right)  }%
{\gamma\left(  r\right)  }%
\]
if the last limit above exists. We already know by concavity of $\gamma$ that
the last expression above is bounded above by $d-1$ (see Step 4 of the proof
of Theorem \ref{tcap}). Therefore, by requiring that it be bounded away from
$0$, we can assert the following.

\begin{prop}
\label{commens}Under the assumptions in Theorems \ref{t3} and \ref{tcap}, if
$\lim_{r\rightarrow0}\frac{r\gamma^{\prime}\left(  r\right)  }{\gamma\left(
r\right)  }$ exists, then the functions $\mathrm{K}$ and $1/\varphi$ are
commensurate if and only if%
\[
d>1/\lim_{r\rightarrow0}\frac{r\gamma^{\prime}\left(  r\right)  }%
{\gamma\left(  r\right)  }.
\]

\end{prop}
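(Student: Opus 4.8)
The plan is to assemble the claimed equivalence from the analytic facts already collected in the discussion preceding the statement; no new estimate is required. First, since the hypotheses $\lim_0\varphi=0$ and $\varphi$ non-decreasing force $v$ to be unbounded (as shown just above) and $v$ is non-increasing, we have $v\big(\gamma^{-1}(x)\big)\to+\infty$ as $x\to0$; hence $\mathrm K(x)=v\big(\gamma^{-1}(x)\big)$ for all $x$ small enough, and the substitution $r=\gamma^{-1}(x)$ (so $x=\gamma(r)$) gives
\[
\mathrm K(x)\,\varphi(x)=v(r)\cdot\frac{\gamma^{d}(r)}{r}=\frac{v(r)}{\,r/\gamma^{d}(r)\,},\qquad r=\gamma^{-1}(x)\downarrow 0 .
\]
As all the functions appearing here are continuous and strictly positive on a punctured neighbourhood of $0$, $\mathrm K$ and $1/\varphi$ are commensurate near $0$ if and only if the ratio on the right, and its reciprocal, stay bounded as $r\to 0$.

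Next I would invoke the l'H\^opital computation already carried out above. With $g(r)=r/\gamma^{d}(r)$ and $h(r)=v(r)$ one has $h(0+)=+\infty$ and $g'(r)/h'(r)=-1+d\,r\gamma'(r)/\gamma(r)$; so, writing $\Lambda:=\lim_{r\to0}r\gamma'(r)/\gamma(r)$ for the limit assumed to exist, l'H\^opital's rule yields
\[
\lim_{r\to0}\frac{r/\gamma^{d}(r)}{v(r)}=-1+d\Lambda .
\]
Since $\gamma$ is increasing we have $\Lambda\ge 0$, and since $\gamma$ is concave with $\gamma(0)=0$ the secant-slope bound $\gamma'(r)\le\gamma(r)/r$ gives $\Lambda\le 1$; thus $-1+d\Lambda$ is a well-defined number in $[-1,d-1]$, in particular finite.

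The conclusion is then immediate. The ratio $\big(r/\gamma^{d}(r)\big)/v(r)$ tends to $-1+d\Lambda$: if $-1+d\Lambda>0$, both this ratio and its reciprocal have finite positive limits, hence stay bounded near $0$, so $\mathrm K$ and $1/\varphi$ are commensurate; if $-1+d\Lambda\le 0$ the reciprocal blows up, so they are not. Finally $-1+d\Lambda>0\iff d\Lambda>1\iff d>1/\Lambda$ (with the convention $1/0:=+\infty$, so that the degenerate case $\Lambda=0$ gives $-1+d\Lambda=-1<0$, i.e.\ non-commensurability, in agreement with ``$d>+\infty$'' failing). Since all the constants here depend only on $a,b$ and the law of $B$, this proves the proposition. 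I foresee no genuine obstacle; the only delicate point is the use of l'H\^opital's rule, which is precisely why the statement is phrased for differentiable $\gamma$ with the limit $r\gamma'(r)/\gamma(r)$ existing — in the merely a.e.-differentiable setting one would replace it, as in Step~4 of the proof of Theorem~\ref{tcap}, by its one-sided variant applied to $g/h$ together with a matching lower bound, trading the limit for a $\limsup$ and a $\liminf$.
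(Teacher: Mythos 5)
Your proposal is correct and follows essentially the same route as the paper's own argument (the discussion preceding the proposition): substitute $r=\gamma^{-1}(x)$ to reduce commensurability of $\mathrm{K}$ and $1/\varphi$ to that of $v(r)$ and $r/\gamma^{d}(r)$, use the monotonicity and vanishing of $\varphi$ to see that $v$ is unbounded, and apply l'H\^opital's rule to identify the limit $-1+d\lim_{r\to0}r\gamma'(r)/\gamma(r)$, concluding the equivalence with $d>1/\lim_{r\to0}r\gamma'(r)/\gamma(r)$. Your additional bookkeeping (that the limit lies in $[-1,d-1]$ by monotonicity and concavity, and the explicit treatment of the cases $-1+d\Lambda\le0$, which are in fact vacuous or boundary under the standing hypotheses since the positive ratio must have a nonnegative limit) merely makes explicit what the paper leaves implicit.
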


The advantage of this criterion is that it separates the dimension $d$ from
the information contained in $\gamma$ about the law of the scalar process $B$.
One can also reformulate the above proposition by assuming that $\gamma$ is of
index $\mathrm{ind}\gamma$ (see the discussion surrounding Lemma
\ref{Lemindex} for the significance of the index): $d>1/\mathrm{ind}\gamma$
implies that $\mathrm{K}$ and $1/\varphi$ are commensurate; the proof of this
fact is left to the reader.

\section{Examples}

In this section, we look at a class of examples, to see what our results of
Sections 2 and 4 imply in practice. Before we do this, let us establish and
recall what these results imply in general for the probabilities of hitting
points (singletons).

\begin{thm}
\label{t4}Assume that $\gamma$ satisfies Hypothesis \ref{h0} and that $B$ is
such that (\ref{e1}) and (\ref{e1v}) hold. If $1/\gamma^{d}$ is integrable at
$0$, then $B$ hits points with positive probability.

Assume instead that $\gamma\left(  r\right)  =o\left(  r^{1/d}\right)  $ near
$0$, and that $\varphi(s):=s^{d}/\gamma^{-1}(s)$ is non-decreasing near $0$
and Condition \textnormal{(\ref{gammamult})} holds. Then almost surely, $B$ does
not hit points.
\end{thm}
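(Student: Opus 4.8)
The plan is to read off the first assertion directly from the capacity lower bound and the second from the Hausdorff-measure upper bound, by testing each on a single point. For the first assertion there is essentially nothing to do beyond invoking results already in hand: Hypothesis~\ref{h0} is assumed, so Theorem~\ref{tcap} applies, and since $1/\gamma^{d}$ is integrable at $0$ the kernel $\mathrm{K}$ of that theorem is bounded above by the constant $\mathrm{K}_{\infty}$ of Remark~\ref{hitprem}; as noted there, one may then replace $\mathcal{C}_{\mathrm{K}}(\{z\})$ by $\mathcal{C}_{1}(\{z\})=1$ at the cost of an extra factor $1/\mathrm{K}_{\infty}$, so that $\P (B([a,b])\ni z)\geq C/\mathrm{K}_{\infty}>0$ for every $z\in\mathbb{R}^{d}$ and every $0<a<b<\infty$.

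For the second assertion I would fix $z\in\mathbb{R}^{d}$ and $0<a<b<\infty$, pick $M>0$ with $z\in[-M,M]^{d}$, and apply Theorem~\ref{t3} to the singleton $A=\{z\}$. The one real step is to check the hypotheses of Theorem~\ref{t3}. The function $\varphi(s)=s^{d}/\gamma^{-1}(s)$ is non-decreasing near $0$ and $\gamma$ satisfies \textnormal{(\ref{gammamult})}, both by assumption; moreover $\varphi$ is automatically continuous (hence right-continuous) on $(0,\infty)$, because $\gamma$ is continuous and strictly increasing, so $\gamma^{-1}$ is continuous and strictly positive on $(0,\infty)$. The only thing left to verify is $\lim_{0+}\varphi=0$, which I would obtain by the substitution $s=\gamma(r)$: then $\gamma^{-1}(s)=r\to0$ as $s\to0^{+}$ and $\varphi(\gamma(r))=\gamma(r)^{d}/r$, so the hypothesis $\gamma(r)=o(r^{1/d})$ --- which is precisely $\gamma(r)^{d}=o(r)$ --- forces $\varphi(\gamma(r))\to0$, i.e.\ $\lim_{0+}\varphi=0$.

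Once the hypotheses are verified, the conclusion is immediate: covering $\{z\}$ by the single open ball $B(z,r)$ of radius $r$ gives $\mathcal{H}_{\varphi}(\{z\})\leq\lim_{r\to0^{+}}\varphi(2r)=0$, so Theorem~\ref{t3} yields $\P (B([a,b])\ni z)\leq C\,\mathcal{H}_{\varphi}(\{z\})=0$ for every $0<a<b<\infty$, and hence $\P (\exists\,t>0:\ B(t)=z)=0$ upon writing that event as the countable union $\bigcup_{n\geq1}\{B([1/n,n])\ni z\}$ of null events; since $z$ was arbitrary, every point is polar, which is what ``$B$ does not hit points'' means. I do not anticipate a genuine obstacle: the two places deserving a little care are the change-of-variables step turning $\gamma(r)=o(r^{1/d})$ into $\lim_{0+}\varphi=0$ (so that Theorem~\ref{t3} really does apply), and the upgrade from the fixed-$z$, fixed-$[a,b]$ null statement to polarity of points via a countable union. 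It is also worth recording in passing that $r^{1/d}=o(\log^{-1/2}(1/r))$ as $r\to0$, so the hypothesis $\gamma=o(r^{1/d})$ is consistent with --- indeed far stronger than --- the standing a.s.\ continuity of $B$.
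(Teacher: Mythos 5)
Your proposal is correct and follows essentially the same route as the paper: the first assertion is exactly the content of Remark \ref{hitprem} (Theorem \ref{tcap} with bounded kernel $\mathrm{K}$), and the second is the paper's argument verbatim — verify $\lim_{0+}\varphi=0$ via the substitution $s=\gamma(r)$ from $\gamma(r)^{d}=o(r)$, then apply Theorem \ref{t3} to a singleton, whose $\mathcal{H}_{\varphi}$-measure vanishes. Your explicit countable-union step to pass from fixed $[a,b]$ to polarity of points is a harmless addition the paper leaves implicit.
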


\begin{proof}
The first statement of the theorem was already established in Remark
\ref{hitprem}. To prove the second statement of the theorem, first note that
since $\gamma$ is continuous and strictly increasing, $\varphi$ is a
continuous function. The assumption of the theorem also says that $\varphi$ is
non-decreasing. We claim that $\lim_{0}\varphi=0$. Assuming this is true, we
can apply Theorem \ref{t3}. Thus,
\begin{align*}
{\mathcal{H}}_{\varphi}(\left\{  x\right\}  )  &  =\lim_{\varepsilon
\rightarrow0^{+}}\inf\left\{  \sum_{i=1}^{\infty}\varphi(2r_{i}):x\in
\bigcup_{i=1}^{\infty}B(x_{i},r_{i}),\ \sup_{i\geq1}r_{i}\leq\varepsilon
\right\} \\
&  =\lim_{\varepsilon\rightarrow0^{+}}\inf\left\{  \varphi(2\varepsilon):x\in
B(x,\varepsilon)\right\}  =\lim_{\varepsilon\rightarrow0^{+}}\inf
\varphi(2\varepsilon)=0,
\end{align*}
finishing the proof.

We are left to prove that $\lim_{0}\varphi=0$. Since $\gamma$ is bijective, to
compute its inverse, we can solve for $r$ in $\gamma\left(  r\right)  =s$, and
we get $\lim_{0}\gamma^{-1}=0$. Thus in the relation $\gamma\left(  r\right)
=o\left(  r^{1/d}\right)  $ we can replace $r$ by $\gamma^{-1}\left(
s\right)  $ to get, for $s$ near $0$, $\gamma\left(  \gamma^{-1}\left(
s\right)  \right)  =o\left(  \left(  \gamma^{-1}\left(  s\right)  \right)
^{1/d}\right)  $, which, after taking the power of $d$ on both sides, yields
$s^{d}=o\left(  \gamma^{-1}\left(  s\right)  \right)  $. Dividing by
$\gamma^{-1}\left(  s\right)  $ yields the result.
\end{proof}

\begin{rmk}
We have already discussed, in Remark \ref{GermanRem}, that a classical method
based on the existence of jointly continuous local time provides a more
restrictive sufficient condition for hitting points than our Theorem \ref{t4},
which only requires that $1/\gamma^{d}$ be integrable at $0$.
\end{rmk}

\begin{rmk}
\label{modofcontmethod}There is also a classical strategy for proving that a
Gaussian process does not hit points, based on its modulus of continuity. The
method was presented in \cite{Khoshnevisan:97}. We can apply this method in
our context. It is known (see \cite{Adler:90}) that under condition
(\ref{e1}), $h(r)=\gamma(r)\log^{1/2}\left(  \frac{1}{r}\right)  $ is a
uniform modulus of continuity of $B$. The method of \cite{Khoshnevisan:97} can
be used to prove that if $h^{d}\left(  \varepsilon\right)  =o\left(
\varepsilon\right)  $, then $B$ hits points with probability zero; all details
are omitted. We will see below in Remark \ref{modofcontcompare} that this
condition is more restrictive than the one we give here in the second part of
Theorem \ref{t4}.\bigskip
\end{rmk}

We define a $2$-parameter collection of Gaussian processes as follows: for
every $\beta\in\mathbb{R},H\in(0,1)$, we will use the notation $B^{H,\beta}$
for any process $B$ satisfying (\ref{e1}) and (\ref{e1v}) with, for every $r$
in a closed interval in $[0,1)$,%
\begin{equation}
\gamma(r)=\gamma_{H,\beta}\left(  r\right)  :=r^{H}\log^{\beta}(\frac{1}{r}).
\label{gHb}%
\end{equation}
It should be noted that since the constants in (\ref{e1}) are not equal to
$1$, there is a considerable amount of flexibility in how each $B^{H,\beta}$
is defined, that is to say, for $H$ and $\beta$ fixed, $B^{H,\beta}$
represents a generic element of an entire family, constrained only by
(\ref{e1}) and (\ref{e1v}). Thus the notation $B^{H,\beta}$ can be understood
as a class of processes, or a representative member of this class.

To define $B$ on a larger time interval than $[0,1)$, one may replace
$\log^{\beta}(\frac{1}{r})$ by $\log^{\beta}(\frac{c}{r})$ for some
appropriately small constant $c$, but we will not consider this extension. Nor
will we consider the case where the formula for $\gamma_{H,\beta}$ has a
leading constant $c$, nor the case where $\gamma$ is only assumed to be
commensurate with $\gamma_{H,\beta}$ defined in (\ref{gHb}). All these
additional cases can be treated just as we do below using either trivial or
straightforward extensions, with essentially identical results. We omit any
further discussion of these cases for the sake of conciseness.

When $\beta=0$, the family covers fractional Brownian motion, and is
essentially the class studied in \cite{Bierme:09}. Those processes are not
self-similar, but have the same behavior as fBm in terms of their hitting
probabilities. Indeed, one easily checks that the results of our Theorems
\ref{t3} and \ref{tcap} translate into the same results as for the fBm with
the corresponding $H$: the capacity lower bound holds with potential kernel
$\mathrm{K}=\mathrm{K}_{d-1/H}$ for any $d$, and the Hausdorff measure upper
bound holds with the function $\varphi\left(  r\right)  =r^{d-1/H}$ when
$d>1/H$.

When $\beta\neq0$, the processes in this family are highly non-self-similar.
In particular, for $H$ fixed, if $\beta>0$, $B^{H,\beta}$ is infinitely more
irregular than the fBm $B^{H}$, and if $\beta<0$ it is infinitely more regular
than $B^{H}$. By the classical Dudley-Fernique-type results on regularity of
Gaussian fields (see \cite{Adler:90}), it is easy to check that $r\mapsto
r^{H}\log^{\beta+1/2}\left(  1/r\right)  $ is an almost-sure modulus of
continuity for $B^{H,\beta}$. Thus the three processes $B^{H}$, $B^{H,\beta}$
for $\beta<0$, $B^{H,\beta}$ for $\beta>0$, share the property that they are
$\alpha$-H\"{o}lder-continuous almost surely as soon as $\alpha<H$. As a
matter of fact, if $\beta<-1/2$, $B^{H,\beta}$ is almost surely $H$-H\"{o}lder
continuous, but not $\alpha$-H\"{o}lder-continuous almost surely if $\alpha>H$.

We now explore what the theorems in this article imply for $B^{H,\beta}$, and
will see that for the most part, the potential kernel $\mathrm{K}%
=\mathrm{K}_{d-1/H}$ and the Hausdorff measure function $\varphi\left(
r\right)  =r^{d-1/H}$ need to be abandoned. We will also see that the case
$H=1/d$ represents a critical situation, in which a transition occurs on the
question of hitting points, depending on the value of $\beta$.\vspace*{0.1in}

Let us first translate Theorem \ref{t4} on probabilities of hitting points for
$B^{H,\beta}$.

\begin{prop}
\label{hitp} If $d<1/H$, or if $d=1/H$ and $\beta>1/d$, then any process
$B^{H,\beta}$ hits points with positive probability. On the other hand, if
$d>1/H$ or if $d=1/H$ and $\beta<0$, any process $B^{H,\beta}$ a.s. does not
hit points.
\end{prop}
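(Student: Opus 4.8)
The plan is to deduce both halves of Proposition \ref{hitp} directly from Theorem \ref{t4}; since that theorem already did the analytic heavy lifting in general, the only task is to check, for $\gamma=\gamma_{H,\beta}$, the hypotheses appearing in each half. First I would record the elementary facts about $\gamma_{H,\beta}(r)=r^{H}\log^{\beta}(1/r)$ near $0$. Differentiating gives $\gamma_{H,\beta}'(r)=r^{H-1}\log^{\beta-1}(1/r)\,[H\log(1/r)-\beta]$, which is positive for $r$ small, so $\gamma_{H,\beta}$ is strictly increasing there, and which tends to $+\infty$ as $r\to 0$ because $r^{H-1}\to+\infty$ (here $H<1$) beats every power of the logarithm. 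A second differentiation shows that $\gamma_{H,\beta}''(r)=r^{H-2}\log^{\beta-2}(1/r)\,\{H(H-1)\log^{2}(1/r)+O(\log(1/r))\}$, whose leading term is negative since $H(H-1)<0$, so $\gamma_{H,\beta}$ is concave near $0$; hence, by the remark following Theorem \ref{tcap}, Hypothesis \ref{h0} holds. Finally, $\mathrm{ind}\,\gamma_{H,\beta}=H\in(0,\infty)$ regardless of $\beta$ (as noted after Lemma \ref{Lemindex}), so Lemma \ref{Lemindex} yields condition (\ref{gammamult}).

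For the first assertion, by the first part of Theorem \ref{t4} it suffices to show that $1/\gamma^{d}$ is integrable at $0$, i.e. that $\int_{0}r^{-dH}\log^{-d\beta}(1/r)\,dr<\infty$. If $d<1/H$ then $dH<1$, so $r^{-dH}$ is already integrable at $0$ and the logarithmic factor does not affect this. If $d=1/H$ then $dH=1$ and the substitution $u=\log(1/r)$ turns the integral into $\int^{\infty}u^{-d\beta}\,du$, which is finite exactly when $d\beta>1$, i.e. when $\beta>1/d$. Both stated cases of the first assertion follow.

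For the second assertion I would apply the second part of Theorem \ref{t4}, whose $\gamma$-side requirements are (\ref{gammamult}) --- already checked --- together with $\gamma(r)=o(r^{1/d})$ near $0$ and the monotonicity of $\varphi(s):=s^{d}/\gamma^{-1}(s)$ near $0$. For the first of these, $\gamma(r)/r^{1/d}=r^{H-1/d}\log^{\beta}(1/r)$: when $d>1/H$ the exponent $H-1/d$ is positive so this tends to $0$, and when $d=1/H$ it equals $\log^{\beta}(1/r)$, which tends to $0$ precisely because $\beta<0$. For the monotonicity of $\varphi$, I would use that $\gamma_{H,\beta}$ is an increasing homeomorphism of a neighborhood of $0$, so $\varphi$ is non-decreasing near $0$ if and only if $\varphi\circ\gamma_{H,\beta}$ is; and $\varphi(\gamma_{H,\beta}(r))=\gamma_{H,\beta}(r)^{d}/r=r^{dH-1}\log^{d\beta}(1/r)$, whose derivative is $r^{dH-2}\log^{d\beta-1}(1/r)\,[(dH-1)\log(1/r)-d\beta]$. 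When $d>1/H$ the exponent $dH-1$ is positive, so the bracket is positive for $r$ small and the derivative is positive; when $d=1/H$ this function is $\log^{d\beta}(1/r)$, with derivative $-d\beta\,r^{-1}\log^{d\beta-1}(1/r)$, which is positive precisely because $\beta<0$. Hence $\varphi$ is non-decreasing near $0$, and Theorem \ref{t4} applies.

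The argument is entirely a matter of verifying hypotheses, so I do not expect a genuine obstacle; the only points needing care are the two critical sub-cases $d=1/H$, where the dichotomy is governed by the sign of $\beta$ entering through a single logarithmic factor, and the concavity of $\gamma_{H,\beta}$, where one should confirm that the leading $H(H-1)\log^{2}(1/r)$ term of $\gamma_{H,\beta}''$ genuinely dominates the lower-order logarithmic terms rather than stopping at a heuristic.
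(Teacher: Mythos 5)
Your proof is correct and follows essentially the same route as the paper: both reduce the proposition to Theorem \ref{t4} and verify its hypotheses for $\gamma_{H,\beta}$, the paper checking Hypothesis \ref{h0} via the monotonicity of $\gamma'$ and omitting the routine verifications for the second statement, which you carry out explicitly (integrability of $1/\gamma^{d}$, $\gamma(r)=o(r^{1/d})$, condition (\ref{gammamult}) via Lemma \ref{Lemindex}, and monotonicity of $\varphi$ through $\varphi\circ\gamma_{H,\beta}$). The only caveat is that your concavity argument via $H(H-1)<0$ presumes $H\in(0,1)$, which is exactly the range in which the class $B^{H,\beta}$ is defined in Section 5, so this is not a gap.
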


\begin{proof}
The first statement of the proposition follows by the first statement in
Theorem \ref{t4}, since
\[
\int_{0}\frac{ds}{\gamma^{d}\left(  s\right)  }=\int_{0}\frac{ds}{s^{dH}%
\log^{d\beta}\left(  1/s\right)  }<\infty
\]
holds as soon as $dH<1$ or as soon as $dH=1$ and $d\beta>1$ modulo checking
Hypothesis \ref{h0}, which we now do. Since $\gamma^{\prime}\left(  r\right)
$ exists and is equal to $r^{H-1}\left(  H\log^{\beta}\left(  1/r\right)
-\beta\log^{\beta-1}\left(  1/r\right)  \right)  $, we see that for small $r$,
this is strictly positive, asymptotically equivalent to $r^{H-1}H\log^{\beta
}\left(  1/r\right)  $, and strictly decreasing. Therefore Hypothesis \ref{h0} holds.

The proof of the second statement of the proposition follows from the second
statement of Theorem \ref{t4} in an equally straightforward way, whose details
are omitted.
\end{proof}

\begin{rmk}
\label{modofcontcompare}By the second statement of this proposition,
$B^{H,\beta}$ hits points with probability zero when $H=1/d$ as soon as
$\beta<0$. If we try to use the Gaussian modulus-of-continuity method
described in Remark \ref{modofcontmethod}, to get the same result when
$H=1/d$, we see that we must require that $h\left(  r\right)  ^{d}r^{-1}%
=\log^{d\beta+d/2}\left(  1/r\right)  $ tends to $0$ as $r\rightarrow0$, i.e.
that $\beta<-1/2$. Thus the second part of Theorem \ref{t4} is sharper than
the method described in Remark \ref{modofcontmethod}.
\end{rmk}

We next look at what Theorems \ref{t3} and \ref{tcap} imply on bounds for the
hitting probabilities of $B^{H,\beta}$ for arbitrary sets.

\begin{thm}
\label{texa}Assume $B=B^{H,\beta}$, i.e. assume that for each component of
$B$, \textnormal{(\ref{e1})} and \textnormal{(\ref{e1v})} hold with $\gamma
=\gamma_{H,\beta}$ as in \textnormal{(\ref{gHb})}. Then the following statements hold.

\begin{enumerate}
\item If $d>1/H$, for all $0<a<b<1$ and $M>0$, there exist constants
$C_{1},C_{2}>0$ such that for any Borel set $A\subset[-M, M]^{d}$,
\[
C_{1}\mathcal{C}_{1/\varphi}(A)\leq\P ({B}([a,b])\cap A\neq\varnothing)\leq
C_{2}{\mathcal{H}}_{\varphi}(A),
\]
where $\varphi(x)=x^{d-\frac{1}{H}}\log^{\beta/H}(1/x).$

\item If $d=1/H$ and $\beta<0$, the upper bound still holds, with the same
$\varphi$, namely $\varphi(x)=\log^{\beta/H}(1/x).$

\item If $d=1/H$, for $\beta<1/d$, the lower bound holds with $\varphi
(x)=\log^{\beta/H-1}\left(  1/x \right)  $.

\item If $d=1/H$, for $\beta\geq1/d$, the lower bound holds with
$\varphi\equiv1$.

\item If $d<1/H<+\infty$ the lower bound holds with $\varphi\equiv1$.
\end{enumerate}
\end{thm}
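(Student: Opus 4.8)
The plan is to \emph{specialize Theorems \ref{tcap} and \ref{t3} to the choice $\gamma=\gamma_{H,\beta}$}: no new probabilistic argument is needed, only verification of the hypotheses of those two theorems together with elementary asymptotic analysis of $\gamma_{H,\beta}$, of its inverse, and of the integral $v$. First I would dispatch the hypotheses. For $H\in(0,1)$ one has $\gamma_{H,\beta}^{\prime}(r)=r^{H-1}\bigl(H\log^{\beta}(1/r)-\beta\log^{\beta-1}(1/r)\bigr)$, which near $0$ is strictly positive, equivalent to $Hr^{H-1}\log^{\beta}(1/r)\to+\infty$, and strictly decreasing; hence $\gamma_{H,\beta}$ is strictly concave near $0$ with $\gamma^{\prime}(0+)=+\infty$, so Hypothesis \ref{h0} holds (this is the computation already used in the proof of Proposition \ref{hitp}). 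Condition (\ref{gammamult}), needed for the upper bound in Theorem \ref{t3}, holds because $\mathrm{ind}\,\gamma_{H,\beta}=H\in(0,\infty)$, so Lemma \ref{Lemindex} applies.

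The analytic heart of the argument is the following chain of commensurabilities near the origin. Inverting $x=r^{H}\log^{\beta}(1/r)$ by the de Bruijn-type ansatz $r\asymp x^{1/H}\log^{-\beta/H}(1/x)$ and using $\log(1/r)=\tfrac1H\log(1/x)+O(\log\log(1/x))\asymp\log(1/x)$, one obtains
\[
\gamma^{-1}(x)\asymp x^{1/H}\log^{-\beta/H}(1/x),\qquad \varphi(x)=\frac{x^{d}}{\gamma^{-1}(x)}\asymp x^{\,d-1/H}\log^{\beta/H}(1/x).
\]
For $v(r)=\int_{r}^{b-a}s^{-dH}\log^{-d\beta}(1/s)\,ds$ one gets, by regime: if $dH>1$, an elementary antiderivative estimate gives $v(r)\asymp r^{\,1-dH}\log^{-d\beta}(1/r)\to+\infty$; if $dH=1$ and $d\beta<1$, the substitution $u=\log(1/s)$ gives $v(r)\asymp\log^{\,1-d\beta}(1/r)\to+\infty$; if $dH<1$, or $dH=1$ and $d\beta>1$, then $1/\gamma^{d}$ is integrable at $0$ and $v$ is bounded. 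Composing $v$ with $\gamma^{-1}$ and noting the exponent collapse $\tfrac{\beta(dH-1)}{H}-d\beta=-\tfrac{\beta}{H}$, one finds in the first two regimes that $\mathrm{K}(x)=\max\{1,v(\gamma^{-1}(x))\}\asymp 1/\varphi(x)$, where $\varphi(x)=x^{\,d-1/H}\log^{\beta/H}(1/x)$ when $dH>1$ and $\varphi(x)=\log^{\beta/H-1}(1/x)$ when $dH=1,\ \beta<1/d$ (recall $\beta/H=d\beta$ there); in the third regime $\mathrm{K}$ is bounded, so $\mathcal{C}_{\mathrm{K}}\asymp\mathcal{C}_{1}$ by Remark \ref{hitprem}.

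It then remains to assemble the five statements. \emph{Upper bounds} (parts 1 and 2): I would check the remaining hypotheses of Theorem \ref{t3} for $\varphi(s)=s^{d}/\gamma^{-1}(s)$. It is continuous, hence right-continuous; its logarithmic derivative $\tfrac{d}{dx}\log\varphi$ has the sign of $d-\gamma(r)/(r\gamma^{\prime}(r))$ at $r=\gamma^{-1}(x)$, which tends to $d-1/H>0$ when $d>1/H$, and to $0$ with leading term of the sign of $-\beta$, hence positive when $d=1/H,\ \beta<0$, so $\varphi$ is non-decreasing near $0$; and $\lim_{0}\varphi=0$ then follows since $\varphi$ is eventually monotone and commensurate with a function tending to $0$. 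Theorem \ref{t3} gives $\P(B([a,b])\cap A\neq\varnothing)\le C\,\mathcal{H}_{\varphi}(A)$, and since Hausdorff measures built from commensurate gauges are commensurate, $\varphi$ may be replaced by $x^{d-1/H}\log^{\beta/H}(1/x)$ (which equals $\log^{\beta/H}(1/x)$ when $d=1/H$). \emph{Lower bounds}: Theorem \ref{tcap} gives $C\,\mathcal{C}_{\mathrm{K}}(A)\le\P(B([a,b])\cap A\neq\varnothing)$, and capacities built from commensurate kernels are commensurate; so parts 1 and 3 follow from $\mathrm{K}\asymp1/\varphi$ with the stated $\varphi$, while part 5 ($d<1/H$) and part 4 for $\beta>1/d$ follow from $\mathrm{K}\asymp1$, i.e.\ $\mathcal{C}_{\mathrm{K}}\asymp\mathcal{C}_{1}$, which is a positive constant on every non-empty subset of $[-M,M]^{d}$.

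I expect the only real obstacle to be the \emph{bookkeeping of the logarithmic factors}: producing the two-sided inversion $\gamma^{-1}(x)\asymp x^{1/H}\log^{-\beta/H}(1/x)$ rigorously (rather than just its leading power), and then propagating the logarithmic exponents correctly through the composition $v\circ\gamma^{-1}$ so that $\mathrm{K}$ and $1/\varphi$ genuinely coincide up to constants. A secondary delicate point is the endpoint $\beta=1/d$ in part 4: there $v(r)\asymp\log\log(1/r)$ is unbounded, so the clean ``$\mathcal{C}_{\mathrm{K}}\asymp\mathcal{C}_{1}$'' reduction fails and this value must be handled separately (for instance via an approximation argument in $\beta$, or by observing that the part-3 gauge $\log^{\beta/H-1}=\log^{0}$ degenerates to the constant gauge precisely at $\beta=1/d$).
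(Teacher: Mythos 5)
Your proposal follows essentially the same route as the paper's proof: specialize Theorems \ref{tcap} and \ref{t3} to $\gamma=\gamma_{H,\beta}$, verify Hypothesis \ref{h0} and condition (\ref{gammamult}), establish $\gamma^{-1}(x)\asymp x^{1/H}\log^{-\beta/H}(1/x)$, and compute $v$ and $v\circ\gamma^{-1}$ in the three regimes, concluding by commensurability of gauges and kernels. The only routing differences are minor: you invoke Lemma \ref{Lemindex} for (\ref{gammamult}) where the paper does a short direct computation split on the sign of $\beta$, and in case 1 you verify $\mathrm{K}\asymp1/\varphi$ by the explicit asymptotics of $v$ (the ``exponent collapse''), where the paper instead applies Proposition \ref{commens} via $\lim_{r\to0}r\gamma'(r)/\gamma(r)=H$; both are sound, and your direct check of monotonicity of the exact $\varphi(s)=s^{d}/\gamma^{-1}(s)$ through its logarithmic derivative is if anything more careful than the paper's replacement of $\varphi$ by a commensurate function at that step.

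The one substantive point is the endpoint $\beta=1/d$ in case 4, which you flag correctly: there $\beta/H=1$, the antiderivative formula $v(r)\asymp\log^{1-\beta/H}(1/r)$ used in the paper's Step 3 is no longer valid, and in fact $v(r)\asymp\log\log(1/r)$ is unbounded, so $\mathrm{K}\asymp\log\log(1/x)$ rather than a constant; Theorem \ref{tcap} then yields only a $\mathcal{C}_{\mathrm{K}}$ lower bound (which vanishes on singletons), and Remark \ref{hitprem} does not apply since $1/\gamma^{d}$ fails to be integrable at $0$. The paper's own proof glosses over exactly this case, so your observation identifies a real imprecision rather than a defect of your argument; note, however, that your two suggested remedies do not obviously close it either (one cannot approximate in $\beta$ since the process is fixed, and the degeneration of the part-3 gauge to the constant gauge is formal, not a proof), so a complete treatment of $\beta=1/d$ would require an additional argument beyond what either you or the paper provide. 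For $\beta>1/d$ and for all the other cases your argument is complete.
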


\begin{rmk}
Notice that in the case $d=1/H$, for $\beta<0$, there is a discrepancy factor
equal to $\log(1/x)$ between the two functions $\varphi$ in the upper and
lower bounds. This lack of precision at the logarithmic level is not visible
in the power scales. It shows that in the so-called \textquotedblleft critical
case\textquotedblright\ identified for fBm and other power-scale-based
processes as in \cite{Bierme:09}, at least one of the lower capacity bounds or
the upper Hausdorff measure bounds must be inefficient.
\end{rmk}

The theorem and its corollaries given below are all proved further below.
Unlike in the power scale, our theorems allow us to look at our examples when
$H=0$ or $1$. When $H=1$, we get non-trivial (non-smooth) processes as soon as
$\beta>0$. When $H=0$, we get continuous processes as soon as $\beta<-1/2$; in
this case, Condition (\ref{gammamult}) does not hold, so care is required for
the upper bound.

\begin{cor}
\label{corex1}Assume $B,a,b,M,A$ are as in Theorem \ref{texa}. If $H=1$ and
$\beta>0$, for all $d>1$, both bounds in Theorem \ref{texa} hold with
$\varphi(x)=x^{d-1}\log^{\beta}(1/x)$. If $H=0$ and $\beta<-1/2$, then the
lower bound in Theorem \ref{texa} hold with $\varphi=1$, so that $B$ hits
points with positive probability.
\end{cor}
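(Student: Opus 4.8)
The plan is to handle the two boundary cases $H=1,\beta>0$ and $H=0,\beta<-1/2$ separately; in each the strategy is the same as in the proof of Theorem \ref{texa} for $H\in(0,1)$, namely to check the hypotheses of Theorems \ref{tcap} and \ref{t3} for $\gamma=\gamma_{H,\beta}$ (extending the notation $B^{H,\beta}$ in the obvious way to the boundary value of $H$) and then to read off the kernel and gauge function from the asymptotics of $\gamma^{-1}$. As a common preliminary I would record that near $0$
\[
\gamma_{1,\beta}'(r)=\log^{\beta-1}(1/r)\bigl(\log(1/r)-\beta\bigr),\qquad
\gamma_{0,\beta}'(r)=-\beta\,r^{-1}\log^{\beta-1}(1/r),
\]
both strictly positive near $0$ (using $\beta>0$, resp.\ $\beta<0$), both with limit $+\infty$ at $0+$; differentiating once more (substituting $u=\log(1/r)$) shows each derivative is decreasing near $0$, so $\gamma_{H,\beta}$ is concave near $0$ in both cases. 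By the remark following Hypothesis \ref{h0}, Hypothesis \ref{h0} therefore holds in both cases, and Theorem \ref{tcap} is available throughout.

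\emph{The case $H=1$, $\beta>0$, $d>1$.} First I would obtain condition (\ref{gammamult}) from Lemma \ref{Lemindex}: since $\gamma_{1,\beta}(x)=o(x^{\alpha})$ precisely when $\alpha<1$, we have $\mathrm{ind}\,\gamma_{1,\beta}=1\in(0,\infty)$, so the lemma applies. Next, to use Theorem \ref{t3} I must verify that $\varphi(s)=s^{d}/\gamma^{-1}(s)$ is right-continuous, non-decreasing near $0$, with $\lim_{0+}\varphi=0$; the substitution $s=\gamma_{1,\beta}(r)$ (legitimate because $\gamma_{1,\beta}$ is an increasing bijection near $0$) turns this into the function $r\mapsto\gamma_{1,\beta}(r)^{d}/r=r^{d-1}\log^{d\beta}(1/r)$, whose derivative is $r^{d-2}\log^{d\beta-1}(1/r)\bigl((d-1)\log(1/r)-d\beta\bigr)>0$ for small $r$ when $d>1$ and which tends to $0$; right-continuity is immediate from continuity of $\gamma^{-1}$. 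Hence Theorems \ref{tcap} and \ref{t3} both apply. It remains to put $\varphi$ and $\mathrm{K}$ in the stated forms up to constants. Inverting $s=r\log^{\beta}(1/r)$ gives $\log(1/r)\sim\log(1/s)$ and therefore $\gamma^{-1}(s)\sim s\log^{-\beta}(1/s)$, so $\varphi(s)\sim s^{d-1}\log^{\beta}(1/s)$; since $\mathcal H_{\varphi}$ and $\mathcal H_{x^{d-1}\log^{\beta}(1/x)}$ agree up to a multiplicative constant for commensurate gauges, the upper bound takes the asserted form. For the lower bound, $\lim_{r\to0}r\gamma'(r)/\gamma(r)=\lim_{r\to0}\bigl(\log(1/r)-\beta\bigr)/\log(1/r)=1$, so Proposition \ref{commens} (with $d>1=1/1$) shows $\mathrm{K}$ and $1/\varphi$ are commensurate on $(0,2M\sqrt d]$, making $\mathcal C_{\mathrm{K}}(A)$ and $\mathcal C_{1/\varphi}(A)$ comparable on $A\subset[-M,M]^{d}$; this yields part (1) of Theorem \ref{texa} with $\varphi(x)=x^{d-1}\log^{\beta}(1/x)$.

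\emph{The case $H=0$, $\beta<-1/2$.} Here $\gamma(r)=\log^{\beta}(1/r)$. I would first note that $B$ is a.s.\ continuous: by the Dudley-type criterion recalled in the introduction, continuity holds whenever $\gamma(r)=o(\log^{-1/2}(1/r))$, which is exactly $\beta<-1/2$. Hypothesis \ref{h0} holds by the preliminary computation. Finally $1/\gamma^{d}(r)=\log^{-d\beta}(1/r)$ is integrable at $0$ (it is $o(r^{-1/2})$), so by Remark \ref{hitprem} the kernel $\mathrm{K}$ in Theorem \ref{tcap} is bounded, $\mathcal C_{\mathrm{K}}$ may be replaced by $\mathcal C_{1}$, and since $\mathcal C_{1}(A)=1$ for every non-empty $A$, Theorem \ref{tcap} gives the lower bound with $\varphi\equiv1$; in particular $B$ hits points with positive probability. (No upper bound is claimed, which is consistent with (\ref{gammamult}) failing for $\gamma_{0,\beta}$.)

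\emph{Main obstacle.} None of the steps are deep; the only point requiring genuine care is legitimately applying Theorem \ref{t3} to the \emph{exact} gauge $\varphi(s)=s^{d}/\gamma^{-1}(s)$ — its monotonicity and vanishing at $0$ must be established for the true function, not merely for the asymptotic surrogate $x^{d-1}\log^{\beta}(1/x)$ — and the change of variables $s=\gamma(r)$ is precisely what makes this routine. A secondary, purely bookkeeping, point is the passage from the kernels/gauges delivered by Theorems \ref{tcap}, \ref{t3} to the clean expressions in Theorem \ref{texa}, which rests on the elementary fact that capacities and Hausdorff measures with respect to commensurate kernels/gauges on a bounded set differ only by multiplicative constants.
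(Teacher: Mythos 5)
Your proposal is correct and takes essentially the same route as the paper: for $H=1$, $\beta>0$ you simply re-run the verifications underlying Theorem \ref{texa} (Hypothesis \ref{h0}, condition (\ref{gammamult}), monotonicity and vanishing of $\varphi$, the asymptotics $\gamma^{-1}(s)\sim s\log^{-\beta}(1/s)$, and Proposition \ref{commens}), which is precisely what the paper dismisses as ``immediate,'' while for $H=0$, $\beta<-1/2$ you argue via integrability of $1/\gamma^{d}$ at $0$ (Remark \ref{hitprem}), i.e.\ boundedness of $v$, exactly as the paper does. The only cosmetic difference is that you obtain (\ref{gammamult}) from Lemma \ref{Lemindex} rather than repeating the direct Step-0 computation, which is a harmless variation.
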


We can also construct uncountable Borel sets which are polar for one process
and are visited with positive probability for another, with both processes
having the same H\"{o}lder continuity properties. In the next corollary, we
consider the chance for a process in $\mathbb{R}^{d}$ to hit a linear Cantor
set (a subset of the $x$-axis). While our method applies to a variety of
Cantor sets (see for instance the $p$-Cantor sets in \cite{Cabrelli:04}), for
simplicity, we consider first the classical Cantor set with fixed ratio
$q\in\left(  0,1/2\right)  $ defined as follows. Let $A_{0}=[0,1]$; for
$n\in\mathbb{N}$, assuming $A_{n}$ has been defined and is a union of $2^{n}$
intervals of length $q^{n}$, we define $A_{n+1}$ by removing a central open
interval of length $q^{n}\left(  1-2q\right)  $ from each interval in $A_{n}$.
The Cantor set is $A:=\lim_{n\rightarrow\infty}A_{n}=\cap_{n}A_{n}$. It is
known that $A$ has Hausdorff dimension $d\left(  A\right)  =\ln2/\ln\left(
1/q\right)  \in\left(  0,1\right)  $. It is also known that it has positive
$\mathrm{K}$-capacity $\mathcal{C}_{K}\left(  A\right)  >0$ if and only if
$\sum_{n=1}^{\infty}2^{-n}\mathrm{K}\left(  q^{n}\right)  <\infty$. See
\cite{Beardon:68}. Moreover, it is easy to check that the $d\left(  A\right)
$-Hausdorff measure of $A$ satisfies $\mathcal{H}_{d\left(  A\right)  }\left(
A\right)  \leq1$; indeed $A_{n}=:\sum_{j=1}^{2^{n}}A_{n,j}$ is a covering of
$A$ with intervals $A_{n,j}$ of length $q^{n}$, which can be made arbitrarily
small, and $\sum_{j=1}^{2^{n}}\left\vert A_{n,j}\right\vert ^{d\left(
A\right)  }=\sum_{j=1}^{2^{n}}q^{d\left(  A\right)  n}=\left(  2q^{d\left(
A\right)  }\right)  ^{n}=1$ since $q^{d\left(  A\right)  }=2^{-1}$. With all
these facts, we state the following.

\begin{cor}
\label{corex2}For any dimension $d\geq2$, let $H\in\left(  1/d,1/\left(
d-1\right)  \right)  $. Assume $A$ is a binary Cantor set on the $x$-axis of
$\mathbb{R}^{d}$ with constant ratio $q:=2^{-1/\left(  d-1/H\right)  }$, so
that its Hausdorff dimension is $d-1/H\in\left(  0,1\right)  $. Then $A$ is
polar for any process in the class $B^{H,\beta}$ with $\beta<0$, i.e. with
probability 1, $B^{H,\beta}$ does not hit $A$ during the time interval
$[a,b]\subset(0,\infty)$. On the other hand, for $\beta^{\prime}>H$, any
process in the class $B^{H,\beta^{\prime}}$ hits $A$ with positive probability
during the time interval $[a,b]$.

Note that processes in the classes $\left\{  B^{H,\beta}:-1/2\leq
\beta<0\right\}  $ and $\left\{  B^{H,\beta^{\prime}}:\beta^{\prime}\geq
H\right\}  $ share the same H\"{o}lder continuity in the sense that they are
$\alpha$-H\"{o}lder-continuous a.s. if and only if $\alpha<H$.
\end{cor}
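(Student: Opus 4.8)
The plan is to deduce everything from part (1) of Theorem \ref{texa}. First I would observe that the hypothesis $H\in(1/d,1/(d-1))$ with $d\geq2$ forces $dH>1$, i.e.\ $d>1/H$, and also $H\in(0,1)$, so that $\gamma_{H,\beta}$ is an admissible variance function --- it satisfies Hypothesis \ref{h0} and Condition (\ref{gammamult}), as recorded earlier --- for both $\beta<0$ and $\beta'>H$. Hence Theorem \ref{texa}(1) applies verbatim, giving, for the given $a,b,M$ and any Borel $A\subset[-M,M]^d$, constants $C_1,C_2>0$ with
\[
C_1\,\mathcal{C}_{1/\varphi_\beta}(A)\leq\P\left(B^{H,\beta}([a,b])\cap A\neq\varnothing\right)\leq C_2\,\mathcal{H}_{\varphi_\beta}(A),\qquad\varphi_\beta(x):=x^{d-1/H}\log^{\beta/H}(1/x),
\]
and similarly with $\beta$ replaced by $\beta'$. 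From here the argument is bookkeeping built on the defining property of the ratio $q=2^{-1/(d-1/H)}$, namely $q^{d-1/H}=1/2$, so that $q^{n(d-1/H)}=2^{-n}$ and $\log^{\theta}(q^{-n})=\left(n\log(1/q)\right)^{\theta}$ for every real $\theta$.

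For the polarity claim ($\beta<0$) I would apply the upper bound to the canonical covering of $A$ by its $n$-th generation $A_n=\bigcup_{j=1}^{2^n}A_{n,j}$, enclosing each interval $A_{n,j}$ (of length $q^n$) in a ball of radius $q^n/2$ (up to an arbitrarily small enlargement, harmless by continuity of $\varphi_\beta$), whose radius tends to $0$ as $n\to\infty$. Then
\[
\sum_{j=1}^{2^n}\varphi_\beta(q^n)=2^n\,q^{n(d-1/H)}\log^{\beta/H}(q^{-n})=\left(n\log(1/q)\right)^{\beta/H}\longrightarrow0\qquad(n\to\infty)
\]
since $\beta/H<0$; hence $\mathcal{H}_{\varphi_\beta}(A)=0$, and the upper bound forces $\P(B^{H,\beta}([a,b])\cap A\neq\varnothing)=0$. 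For the non-polarity claim ($\beta'>H$) I would use the lower bound together with the criterion of \cite{Beardon:68} recalled just above: $\mathcal{C}_{1/\varphi_{\beta'}}(A)>0$ iff $\sum_{n\geq1}2^{-n}/\varphi_{\beta'}(q^n)<\infty$, and
\[
2^{-n}/\varphi_{\beta'}(q^n)=2^{-n}\,q^{-n(d-1/H)}\log^{-\beta'/H}(q^{-n})=\left(n\log(1/q)\right)^{-\beta'/H},
\]
whose sum converges exactly when $\beta'/H>1$, i.e.\ $\beta'>H$; so $\mathcal{C}_{1/\varphi_{\beta'}}(A)>0$ and the lower bound gives $\P(B^{H,\beta'}([a,b])\cap A\neq\varnothing)\geq C_1\,\mathcal{C}_{1/\varphi_{\beta'}}(A)>0$.

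For the concluding H\"older remark, the ``if'' direction is immediate from the a.s.\ modulus of continuity $r\mapsto r^H\log^{\beta+1/2}(1/r)$ of $B^{H,\beta}$ recalled earlier, which is $o(r^\alpha)$ for every $\alpha<H$ and every real $\beta$; for the ``only if'' direction I would invoke the lower bound $\mathrm{E}^{1/2}[(B(t)-B(s))^2]\geq\ell^{-1/2}|t-s|^H\log^\beta(1/|t-s|)$ from (\ref{e1}) together with the two-point local nondeterminism of Lemma \ref{lem1}, and deduce by a standard lower-oscillation estimate (a Borel--Cantelli argument over disjoint dyadic increments) that no a.s.\ $\alpha$-H\"older bound can hold once $\alpha\geq H$; the critical exponent being $H$ regardless of $\beta$, the processes in $\{B^{H,\beta}:-1/2\leq\beta<0\}$ and $\{B^{H,\beta'}:\beta'\geq H\}$ share the same H\"older continuity.

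The parts that genuinely need attention are, first, keeping the logarithmic powers straight in the Beardon criterion: this is where the transition at $\beta'=H$ comes out, and also where one sees that the Hausdorff bound gives polarity for \emph{all} $\beta<0$ while the capacity bound gives non-polarity only for $\beta'>H$, the band $\beta\in[0,H]$ being exactly the $\log$-order inefficiency discussed after Theorem \ref{texa} rather than a gap in the reasoning; and second, the ``only if'' half of the H\"older remark, which rests on classical Gaussian lower-oscillation theory not developed in the paper and which I would present only in sketch form.
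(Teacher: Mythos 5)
Your argument is correct and follows essentially the same route as the paper: polarity comes from the Hausdorff upper bound of Theorem \ref{texa}(1) via $\mathcal{H}_{\varphi_\beta}(A)=0$ (you compute this directly on the generation-$n$ covering, while the paper compares with $\mathcal{H}_{d-1/H}(A)\leq 1$), and non-polarity comes from the capacity lower bound plus the criterion of \cite{Beardon:68}, with the same series computation giving convergence exactly when $\beta'>H$. The H\"older note is handled in the paper only by reference to the earlier modulus-of-continuity discussion, so your sketch of that side remark is, if anything, more explicit than the paper's.
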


The previous corollary shows that our results help us construct processes that
hit classical Cantor sets, and others that do not even though their path
regularity is very similar to the first ones. This is an improvement over
H\"{o}lder-scale tools such as those in \cite{Bierme:09}: Corollary
\ref{corex2} cannot be established with those tools, since it is known that a
Cantor set with constant ratio $q=2^{-1/\left(  d-1/H\right)  }$ has null
$\left(  d-1/H\right)  $-capacity, so that capacity lower bounds for hitting
probabilities are inconclusive; similarly, a positive $\left(  d-1/H\right)
$-Hausdorff measure does not help prove whether a set is non-polar.

On the other hand, even our results leave a small gap in the analysis: the
case $\beta\in\lbrack0,H]$ is not covered. For instance, our results are not
fine enough to tell whether classical fBm (a member of the class $B^{H,0}$,
i.e. with $\beta=0$) in $\mathbb{R}^{d}$ with Hurst parameter $H$ hits a
classical linear Cantor set with dimension $d-1/H$ on the $x$-axis. However,
our results are fine enough to show us how to construct a generalized Cantor
set with the same dimension $d-1/H$, which fBm does hit. For such a
construction, referring again to \cite{Beardon:68}, consider a sequence
$\left(  q_{n}\right)  _{n\in\mathbb{N}}$ of scalars in $(0,1/2)$, and
construct a Cantor set $\tilde{A}=\cap_{n}\tilde{A}_{n}$ like we did above for
$A$, except that from levels $n$ to $n+1$, we remove the central open interval
of length $\left(  \prod_{i=1}^{n}q_{i}\right)  \left(  1-2q_{n+1}\right)  $.
Then it is known that for a given capacity kernel \textrm{K}, the
\textrm{K}-capacity $\mathcal{C}_{\mathrm{K}}( \tilde{A}) $ is strictly
positive if and only if $\sum_{n}2^{-n}$\textrm{K}$\left(  \prod_{i=1}%
^{n}q_{i}\right)  $ is a convergent series. We now devise a sequence $\left(
q_{n}\right)  _{n}$ such that the resulting Cantor $\tilde{A}$ set is slightly
bigger than the classical set for which $q_{n}\equiv q=2^{-1/\left(
d-1/H\right)  }$, just big enough for us to guarantee a positive $\left(
d-1/H\right)  $-capacity, and therefore for any process in the class $B^{H,0}%
$, including classical fBm, to hit $\tilde{A}$ with positive probability.

\begin{cor}
\label{corex3}Let $c>1$. For any dimension $d\geq2$, let $H\in\left(
1/d,1/\left(  d-1\right)  \right)  $. Assume $\tilde{A}$ is a generalized
Cantor set on the $x$-axis of $\mathbb{R}^{d}$ with level-$n$ ratio $q_{n}%
\geq\left(  2^{-1}\left(  1-c/n\right)  \right)  ^{1/\left(  d-1/H\right)  }$,
constructed as described above. Then any process in the class $B^{H,0}$,
including fBm, hits $\tilde{A}$ with positive probability during the time
interval $[a,b]\subset(0,\infty)$.

If the inequality on $q_{n}$ is replaced by an equality, then the Hausdorff
dimension of $\tilde{A}$ is $d-1/H\in\left(  0,1\right)  $.
\end{cor}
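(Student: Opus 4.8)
The plan is to read the hitting statement off the capacity lower bound of Theorem~\ref{texa}(1), combined with the capacity criterion for generalized Cantor sets recalled just before the statement (from \cite{Beardon:68}); the dimension claim then follows routinely from the near-constancy of the ratios $q_{n}$.

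First I would set $\alpha:=d-1/H$ and note that the assumption $H\in(1/d,1/(d-1))$ forces $\alpha\in(0,1)$, so that $d>1/H$ and Theorem~\ref{texa}(1) applies to $B=B^{H,0}$: Hypothesis~\ref{h0} holds for $\gamma_{H,0}(r)=r^{H}$ (concave near $0$ with $\gamma'(0+)=+\infty$), condition~\eqref{gammamult} holds for $\gamma_{H,0}$, and $\varphi(x)=x^{\alpha}$ is non-decreasing near $0$ with $\lim_{0}\varphi=0$. Since $\beta=0$ there is no logarithmic factor and $\gamma_{H,0}$ is defined on all of $\mathbb{R}_{+}$, so the restriction $b<1$ in Theorem~\ref{texa} is inessential here, and $1/\varphi(x)=x^{-\alpha}$ is precisely the Newtonian kernel $\mathrm{K}_{\alpha}$ (as $\alpha>0$). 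Since $\tilde A\subset[-1,1]^{d}$ is Borel, Theorem~\ref{texa}(1) then gives $\P(B^{H,0}([a,b])\cap\tilde A\neq\varnothing)\ge C_{1}\,\mathcal{C}_{1/\varphi}(\tilde A)=C_{1}\,\mathcal{C}_{\alpha}(\tilde A)$, and everything reduces to showing $\mathcal{C}_{\alpha}(\tilde A)>0$.

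For this I would invoke the Beardon criterion: $\mathcal{C}_{\alpha}(\tilde A)>0$ as soon as $\sum_{n\ge1}2^{-n}\bigl(\prod_{i=1}^{n}q_{i}\bigr)^{-\alpha}<\infty$. The ratio condition, raised to the power $\alpha>0$, places $q_{i}^{\alpha}$ strictly above the critical value $\tfrac12$ by a margin of order $c/i$ for all large $i$; hence, absorbing the finitely many small indices into a fixed positive constant, $\prod_{i\le n}q_{i}^{\alpha}\gtrsim 2^{-n}\prod_{i\le n}(1+c/i)$, and since $\prod_{i\le n}(1+c/i)\asymp n^{c}$ (compare $\sum\log(1+c/i)$ with the harmonic series) the general term of the Beardon series is $O(n^{-c})$, summable precisely because $c>1$. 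Thus $\mathcal{C}_{\alpha}(\tilde A)>0$, and the hitting statement follows for every process in the class $B^{H,0}$, fBm included. I expect the heart of the matter to be exactly this calibration: at the endpoint $c=1$ one would recover, up to constants, the classical critical Cantor set of dimension $\alpha$, for which the Beardon series diverges and $\mathcal{C}_{\alpha}=0$; the ratio condition is designed to make $\tilde A$ just thick enough past that threshold, and the one genuinely fiddly point is the bookkeeping of the first few levels, where the displayed lower bound on $q_{n}$ is not yet effective.

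Finally, for the dimension statement I would use that under equality in the ratio condition the ratios satisfy $q_{n}\to 2^{-1/\alpha}$, so $\tilde A$ is a homogeneous Moran set whose $2^{n}$ level-$n$ intervals have common length $\ell_{n}=\prod_{i\le n}q_{i}$ with $\ell_{n}^{1/n}\to 2^{-1/\alpha}$. Covering $\tilde A$ by those intervals gives $\dim_{H}\tilde A\le\liminf_{n}\log(2^{n})/\log(1/\ell_{n})$, and the elementary estimate above (the $c\log n$ correction being negligible against $n\log 2$) shows this $\liminf$ equals $\alpha$, irrespective of the sign of the $O(1/n)$ correction to the ratios; the matching lower bound is the classical Frostman (mass-distribution) estimate for the uniform Cantor measure on a Moran set with convergent contraction ratios, which is the most technical, though entirely standard, ingredient of this half. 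Altogether $\dim_{H}\tilde A=\log 2/\log\bigl(2^{1/\alpha}\bigr)=\alpha=d-1/H\in(0,1)$, as asserted.
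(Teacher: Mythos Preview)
Your approach matches the paper's exactly: apply the capacity lower bound from Theorem~\ref{texa}(1), reduce via the Beardon criterion to showing $\sum_{n}2^{-n}\bigl(\prod_{i\le n}q_{i}\bigr)^{-\alpha}<\infty$ with $\alpha=d-1/H$, and estimate the product so that the general term is $O(n^{-c})$; the paper likewise leaves the Hausdorff-dimension claim as a routine exercise citing \cite{Beardon:68}, though you supply a bit more detail. One caveat worth flagging: as literally written, the hypothesis gives $q_{i}^{\alpha}\ge 2^{-1}(1-c/i)$, which lies \emph{below} $1/2$, not above as you assert---but the paper's own proof makes the same implicit correction (its bound of the series term by $\prod_{i}(1-c/i)$ requires $q_{i}^{\alpha}\ge 2^{-1}(1-c/i)^{-1}\sim 2^{-1}(1+c/i)$, exactly what you used), so this is evidently a typo in the stated inequality rather than a flaw in your argument.
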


\begin{proof}
[Proof of Theorem \ref{texa}]\emph{Step 0: checking the assumptions of the
theorems.}

Throughout this entire proof, we will use the fact that the conclusions of
Theorems \ref{t3} and \ref{tcap} can be stated for any functions that are
commensurate with the $\varphi$ and $\mathrm{K}$ defined therein.

In the proof of Proposition \ref{hitp}, we already checked that $\gamma
_{H,\beta}$ satisfies Hypothesis \ref{h0}, so that we may apply Theorem
\ref{tcap}. To be allowed to apply Theorem \ref{t3}, which we only need for
$d>1/H$ or $\left\{  d=1/H;\beta<0\right\}  $, we only need to check the following:

\begin{description}
\item[(i)] $\varphi(x)=x^{d}/\gamma^{-1}(x)$ is right-continuous and
non-decreasing near $0$ with $\lim_{0+}\varphi=0$.

\item[(ii)] $\gamma$ satisfies the condition (\ref{gammamult}).
\end{description}

Condition (ii) is easy for $\beta\geq0$ since for $x,y<1/e$, we have
$\log\left(  1/x\right)  ,\log\left(  1/y\right)  >1$ and thus
\begin{align*}
\gamma\left(  x\right)  \gamma\left(  y\right)   &  =\left(  xy\right)
^{H}\log^{\beta}\left(  1/x\right)  \log^{\beta}\left(  1/y\right) \\
&  \geq\left(  xy\right)  ^{H}2^{-\beta}\left(  \log\left(  1/x\right)
+\log\left(  1/y\right)  \right)  ^{\beta}=2^{-\beta}\gamma\left(  xy\right)
,
\end{align*}
so that we get $\int_{0}^{1/2}\gamma\left(  xy\right)  \frac{dy}{y\sqrt
{\log\left(  1/y\right)  }}\leq2^{\beta}\gamma\left(  x\right)  \int_{0}%
^{1/2}\frac{dy}{y^{1-H}}=\gamma\left(  x\right)  2^{H+\beta}/H$. When
$\beta<0$, on the other hand, since for $x<1$, $\log\left(  1/\left(
xy\right)  \right)  \geq\log\left(  1/x\right)  $, we get%
\begin{align*}
\int_{0}^{1/2}\gamma\left(  xy\right)  \frac{dy}{y\sqrt{\log\left(
1/y\right)  }}  &  \leq x^{H}\int_{0}^{1/2}y^{H}\log^{-\beta}\left(
1/x\right)  \frac{dy}{y\sqrt{\log\left(  1/y\right)  }}\\
&  =\gamma\left(  x\right)  \int_{0}^{1/2}\frac{dy}{y^{1-H}\sqrt{\log\left(
1/y\right)  }}\leq\gamma\left(  x\right)  2^{H}/2.
\end{align*}

To prove condition (i), we start by noting that for small $x$, $\gamma
^{-1}\left(  x\right)  $ is commensurate with $x^{1/H}\log^{-\beta/H}\left(
1/x\right)  $. Since we may replace $\varphi$ by a constant multiple of it to
apply to Theorem \ref{t3}, we only need to check that $x\mapsto x^{d-1/H}%
\log^{\beta/H}\left(  1/x\right)  $ is right-continuous and non-decreasing
near $0$. Since we assume that $d>1/H$ or $\left\{  d=1/H;\beta<0\right\}  $,
this is immediately true.\vspace*{0.1in}

\emph{Step 1: Proof of case 1.} By Theorem \ref{t3}, the upper bound in case 1
holds with $\varphi\left(  x\right)  =x^{d}/\gamma^{-1}\left(  x\right)  $.
Since $\gamma^{-1}\left(  x\right)  $ is commensurate with $x^{1/H}%
\log^{-\beta/H}\left(  1/x\right)  $, the upper bound conclusion of case 1
holds. By Theorem \ref{tcap}, the lower bound in case 1 holds with the
capacity $\mathcal{C}_{\mathrm{K}}$ where $\mathrm{K}=v\circ\gamma^{-1}$. By
Proposition \ref{commens}, the lower bound conclusion of case 1 will hold as
soon as we can show that $d>1/\lim_{r\rightarrow0}r\gamma^{\prime}\left(
r\right)  /\gamma\left(  r\right)  $. We have $\gamma^{\prime}\left(
r\right)  =r^{H-1}\left(  H\log^{\beta}\left(  1/r\right)  -\beta\log
^{\beta-1}\left(  1/r\right)  \right)  $ so that the limit above computes as
$1/H$, finishing the proof of case 1.\vspace*{0.1in}

\emph{Step 2: Proof of case 2.} This case follows using the argument and
computation in Step 1, since the original $\varphi\left(  x\right)  $ in
Theorem \ref{t3} is commensurate with $\log^{-\beta/H}\left(  1/x\right)
$.\vspace*{0.1in}

\emph{Step 3: Proof of case 3 and 4. }If $d=1/H$, for any $\beta$, we may use
Theorem \ref{tcap} with $\mathrm{K}=\max\left(  1,v\circ\gamma^{-1}\right)  $,
but one may check that Proposition \ref{commens} is not applicable, so we need
to compute $v$ directly. Since everything only needs to be determined up to
multiplicative constants, we do not keep track of them. We compute
\begin{align*}
v\left(  r\right)   &  =\int_{r}^{b-a}s^{-1}\log^{-\beta/H}\left(  1/s\right)
ds\\
&  =\frac{1}{1-\beta H}\left(  \log^{1-\beta/H}\left(  1/r\right)
-\log^{1-\beta/H}\left(  1/b-a\right)  \right) \\
&  \asymp\log^{1-\beta/H}\left(  1/r\right)  .
\end{align*}
Since, as we said above, $\gamma^{-1}\left(  x\right)  \asymp x^{1/H}%
\log^{-\beta/H}\left(  1/r\right)  $, it is then easy to get%
\begin{align*}
v\circ\gamma^{-1}\left(  x\right)   &  \mathrm{\asymp}\log^{1-\beta/H}\left(
x^{-1/H}\log^{\beta/H}\left(  1/r\right)  \right) \\
&  \mathrm{\asymp}\log^{1-\beta/H}\left(  x^{-1}\right)  .
\end{align*}
Therefore, for $\beta\geq H=1/d$ (case 4), we see that $v\circ\gamma
^{-1}\left(  x\right)  $ is bounded, and thus we use $\mathrm{K}=1$, while for
$\beta<H=1/d$, $v\circ\gamma^{-1}\left(  x\right)  $ is unbounded, and we thus
use $\mathrm{K}\left(  x\right)  =\log^{1-\beta/H}\left(  x^{-1}\right)  $.
\vspace*{0.1in}

\emph{Step 4: Proof of case 5.} For $dH<1$, let $\varepsilon>0$ such that
$dH+\varepsilon<1$. Using the same argument as in the previous step, and using
the fact that $s^{\varepsilon}\log^{-\beta/H}\left(  1/s\right)  $ is bounded
above by some constant $M$, we compute
\begin{align*}
v\left(  r\right)   &  =\int_{r}^{b-a}s^{-dH}\log^{-\beta/H}\left(
1/s\right)  ds\\
&  =\int_{r}^{b-a}s^{-dH-\varepsilon}s^{\varepsilon}\log^{-\beta/H}\left(
1/s\right)  ds\\
&  \leq M\int_{r}^{b-a}s^{-dH-\varepsilon}ds\\
&  \leq\frac{\left(  b-a\right)  ^{1-dH-\varepsilon}M}{1-dH-\varepsilon}.
\end{align*}
Since this is bounded, so is $v\circ\gamma^{-1}$, and the result of case 5
follows, finishing the proof of the theorem.
\end{proof}

\begin{proof}
[Proof of Corollary \ref{corex1}]The result for $H=1$ is immediate. For $H=0$,
noting that $v\left(  r\right)  =:\int_{r}^{b-a}\log^{-\beta d}\left(
1/s\right)  ds$ is bounded, by Theorem \ref{tcap}, the lower bound holds with
$\varphi=1$.
\end{proof}

\begin{proof}
[Proof of Corollary \ref{corex2}]First note that from the definitions, since a
Hausdorff measure is computed using scalar diameters, and a capacity is
computed using measures supported on the set, these quantities relative to a
subset of $\mathbb{R}$ are invariant when the subset is immersed in
$\mathbb{R}^{d}$.

Therefore, as mentioned in the paragraph following Corollary \ref{corex1}, our
Cantor set $A$ has finite Hausdorff measure $\mathcal{H}_{d-1/H}\left(
A\right)  \leq1$. This is the Hausdorff measure $\mathcal{H}_{\psi}\left(
A\right)  $ with $\psi\left(  x\right)  =x^{d-1/H}$. By the definition of
Hausdorff measure, it is then immediate that, for the function $\varphi\left(
x\right)  =x^{d-1/H}\log^{\beta/H}(1/x)$ with $\beta<0$, the Hausdorff measure
$\mathcal{H}_{\varphi}\left(  A\right)  =0$. Theorem \ref{texa} part 1, upper
bound, implies that $A$ is polar for $B^{H,\beta}$. Now to show that, for
$\beta^{\prime}\geq H$, $B^{H,\beta^{\prime}}$ hits $A$ with positive
probability, by Theorem \ref{texa}, part 1, lower bound, it is sufficient to
show that, with $\tilde{\varphi}(x)=x^{d-\frac{1}{H}}\log^{\beta^{\prime}%
/H}(1/x)$, $\mathcal{C}_{1/\tilde{\varphi}}\left(  A\right)  >0$. Using the
classical criterion mentioned above (see \cite{Beardon:68}), it is sufficient
to show that $\sum_{n=1}^{\infty}2^{-n}\left(  1/\tilde{\varphi}\left(
q^{n}\right)  \right)  <\infty$. Since we chose $q$ such that $q^{1/H-d}=2$,
we compute%
\[
\sum_{n=1}^{\infty}2^{-n}\left(  1/\tilde{\varphi}\left(  q^{n}\right)
\right)  =\sum_{n=1}^{\infty}\log^{-\beta/H}\left(  q^{n}\right)  =\log\left(
1/q\right)  \sum_{n=1}^{\infty}n^{-\beta/H}<\infty.
\]
This finishes the proof of the corollary.
\end{proof}

\begin{proof}
[Proof of Corollary \ref{corex3}]Similarly to the previous proof, thanks to
Theorem \ref{texa}, part 1, lower bound, and thanks to the characterization of
capacity positivity described above, it is sufficient to show that, with
$\tilde{\varphi}(x)=x^{d-\frac{1}{H}}$, $\sum_{n}2^{-n}/\tilde{\varphi}\left(
\prod_{i=1}^{n}q_{i}\right)  <\infty$. By assumption, this series is bounded
above by $\sum_{n}\prod_{i=1}^{n}\left(  1-\frac{c}{i}\right)  $. Using
$\log\left(  1-u\right)  \leq-u$ for all $u>0$, the product in this series is
bounded above as
\begin{align*}
\prod_{i=1}^{n}\left(  1-\frac{c}{i}\right)   &  =\exp\left(  \sum_{i=1}%
^{n}\log\left(  1-c/i\right)  \right)  \leq\exp\left(  -c\sum_{i=1}^{n}%
i^{-1}\right) \\
&  =\exp\left(  -c\log n-c\mathrm{\gamma}_{\mathrm{e}}+o_{n}\left(  1\right)
\right)  =n^{-c}O_{n}\left(  1\right)
\end{align*}
where $\mathrm{\gamma}_{\mathrm{e}}$ is Euler's constant. For any $c>1$, this
is the general term of a converging series, finishing the proof of the
corollary, modulo the statement about the Hausdorff dimension of $\tilde{A}$
which is well known (see \cite{Beardon:68}) and elementary, and can serve as
an exercise for the interested reader.
\end{proof}

\end{document}